\newtheorem{thm}{Theorem}[section]
\newtheorem{cor}[thm]{Corollary}
\newtheorem{prop}[thm]{Proposition}
\newtheorem{conj}[thm]{Conjecture}
\newtheorem{lem}[thm]{Lemma}
\theoremstyle{definition}
\newtheorem{defn}[thm]{Definition}
\newtheorem{claim}[thm]{Claim}
\newtheorem{thmdefn}[thm]{Theorem-Definition}
\newtheorem{e.g.}[thm]{Example}
\newtheorem{ex}[thm]{Example}
\theoremstyle{remark}
\newtheorem{rem}[thm]{Remark}
\newtheorem{obs}[thm]{Observation}
\numberwithin{equation}{section}
\newcommand{\Rad}[0]{\operatorname{Rad}}
\newcommand{\Sha}[0]{\operatorname{Sha}}
\newcommand{\sha}[0]{\operatorname{sha}}
\newcommand{\codim}[0]{\operatorname{codim}}
\newcommand{\rank}[0]{\operatorname{rank}}
\newcommand{\rk}{{\rm rk}}
\newcommand{\Supp}{{\rm Supp}}
\newcommand{\Hom}{{\mathscr{H}\text{\kern -3pt {\calligra\large om}}\,}}
\newcommand{\univ}{{\rm univ}}
\newcommand{\Ker}[1]{\mathrm{Ker}(#1)}
\newcommand{\Image}[1]{\mathrm{Im}(#1)}
\newcommand{\R}{\mathbb{R}}
\newcommand{\Z}{\mathbb{Z}}
\newcommand{\C}{\mathbb{C}}
\newcommand{\Q}{\mathbb{Q}}
\title[Minimal projective manifolds with vanishing second Chern class]
{Abundance theorem \\ for minimal compact K\"ahler manifolds \\ with vanishing second Chern class}
\author{Masataka IWAI}
\address{Department of Mathematics, Graduate School of Science, Osaka University,
1-1, Machikaneyama-cho, Toyonaka, Osaka 560-0043, Japan.}
\email{{\tt masataka@math.sci.osaka-u.ac.jp}}
\email{{\tt masataka.math@gmail.com}}
\author{Shin-ichi MATSUMURA}
\address{Mathematical Institute, Tohoku University,
6-3, Aramaki Aza-Aoba, Aoba-ku, Sendai 980-8578, Japan.}
\email{{\tt mshinichi-math@tohoku.ac.jp}}
\email{{\tt mshinichi0@gmail.com}}
\date{\today, version 0.01}
\subjclass[2020]{Primary 14E30, Secondary 14D06, 32M25, 32Q26}
\keywords{Abundance conjecture,
Projectively numerically flat vector bundles,
Hermitian flat vector bundles,
Generically ample vector bundles,
Slope stabilities,
Fujita decomposition,
}
\begin{document}

\maketitle

\begin{abstract}
In this paper, for compact K\"ahler manifolds with nef cotangent bundle,
we study the abundance conjecture and the associated Iitaka fibrations.
We show that, for a minimal compact K\"ahler manifold,
the second Chern class vanishes
if and only if the cotangent bundle is nef and the canonical bundle has the numerical dimension $0$ or $1$.
Additionally, in this case, we prove that the canonical bundle is semi-ample.
Furthermore, we give a relation
between the variation of the fibers of the Iitaka fibration
and a certain semipositivity of the cotangent bundle.
\end{abstract}

\tableofcontents

\section{Introduction}

The following conjecture, the so-called \textit{abundance conjecture},
is one of the most important open problems in algebraic geometry.
The abundance conjecture has been solved for projective varieties of dimension at most $3$
(see \cite{Miyaoka87, Mi87, Mi88, Kawa92}).
However, for higher-dimensional varieties,
it remains generally unsolved, and even partial results are scarce
(see \cite{DHP, GM, LP} for some approaches).

\begin{conj}[Abundance conjecture for smooth projective varieties]
\label{Abundance_Conjecture}
Let $X$ be a smooth projective variety.
If the canonical bundle $K_X$ is nef, then $K_X$ is semi-ample.
\end{conj}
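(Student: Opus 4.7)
The statement is the general abundance conjecture, which remains open in dimension $\geq 4$, so what follows is a research strategy rather than a complete proof outline. The standard factorization reduces semi-ampleness of $K_X$ to two subproblems: the \emph{non-vanishing conjecture} $\kappa(X,K_X) \geq 0$, and the equality $\kappa(X,K_X) = \nu(X,K_X)$ of Kodaira and numerical dimensions. Granted both, the base-point-free theorem applied to a sufficiently divisible pluricanonical system on an Iitaka model yields semi-ampleness.

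The plan is induction on $n = \dim X$, using that the conjecture is known for $n \leq 3$ by Miyaoka and Kawamata. The extreme cases of the numerical dimension are tractable: when $\nu(X,K_X) = 0$, the nef $\Q$-divisor $K_X$ is numerically trivial, and a theorem of Kawamata together with Nakayama's criterion produces a torsion pluricanonical section; when $\nu(X,K_X) = n$, the divisor $K_X$ is big and the base-point-free theorem applies directly. For the intermediate range $0 < \nu(X,K_X) < n$, I would first attack non-vanishing via the extension-theorem circle of Hacon--M\textsuperscript{c}Kernan, Siu, and Demailly--P\u{a}un: deform $K_X$ by $\varepsilon A$ for $A$ ample, produce sections at the perturbed level, and pass to the limit using $L^2$-estimates with a singular Hermitian metric of minimal singularities on $K_X$. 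Given non-vanishing, I would take the Iitaka fibration $f \colon X \dashrightarrow Y$ and apply the Fujino--Mori canonical bundle formula to write $K_X \sim_\Q f^*(K_Y + B_Y + M_Y)$, thereby reducing the problem to semi-ampleness of the klt pair $(Y, B_Y + M_Y)$ on a base of strictly smaller dimension, to which the inductive hypothesis would be applied.

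The principal obstacle lies in the last reduction. The inductive step does not merely invoke abundance for smooth varieties in dimension $\dim Y$: it requires abundance for log canonical pairs, which is strictly stronger, and it requires semi-ampleness of the moduli $b$-divisor $M_Y$---the Prokhorov--Shokurov $b$-semiampleness conjecture, itself deep and established only in restricted settings. A secondary obstacle is that the full MMP is not yet available in dimension $\geq 4$ in the form one would want, so one cannot freely replace the Iitaka base by a minimal model. For these reasons the outlined argument is genuinely conditional: at best it reduces the abundance conjecture to $b$-semiampleness plus non-vanishing, rather than settling it unconditionally. An unconditional attack would plausibly need to bypass the Iitaka fibration entirely---for instance through a direct analytic construction of pluricanonical sections out of a singular Hermitian metric of minimal singularities on $K_X$, combined with sharp estimates on its multiplier ideal sheaves---and this is what I regard as the speculative but genuine hard core of the problem.
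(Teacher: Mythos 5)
You have correctly identified that this statement is the general abundance conjecture, which the paper itself does \emph{not} prove; it is stated as Conjecture~\ref{Abundance_Conjecture} precisely because it is open, and the paper's actual contribution is to settle special cases under the extra hypothesis that $\Omega_X$ is nef (or, equivalently in the minimal case, that $c_2(X)=0$) with $\nu(K_X)\le 1$. There is therefore no in-paper proof against which your outline can be checked, and your own caveats about non-vanishing, klt abundance on the base, and $b$-semiampleness of the moduli divisor are accurate diagnoses of why the general statement remains out of reach.

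It is still worth contrasting your sketch with what the paper does for Theorems~\ref{thm-main} and~\ref{thm-main2}, since the two routes are genuinely different. Your plan is the standard MMP-style factorization: establish non-vanishing, pass to the Iitaka fibration, apply the Fujino--Mori canonical bundle formula $K_X\sim_{\Q} f^*(K_Y+B_Y+M_Y)$, and induct on a klt pair of smaller dimension, with Prokhorov--Shokurov $b$-semiampleness as the bottleneck. The paper avoids the canonical bundle formula and the moduli $b$-divisor altogether. Exploiting $c_2(X)=0$, it extracts a projectively flat subsheaf $E\subset\Omega_X$ with $\nu(c_1(E))=1$, shows via a Shafarevich-map argument (Theorem~\ref{not_special}) that $X$ is nonspecial in Campana's sense, and then uses the core fibration together with Campana's additivity $\kappa(K_{X'})=\kappa(K_{F'})+\dim Y'$ (Theorem~\ref{campana_special}) and induction on the fibers to conclude $\kappa=\nu$ directly. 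The payoff of that route is that the $b$-semiampleness obstruction you flag never appears; the cost is that it needs the nef-cotangent hypothesis, a Bogomolov--Sommese vanishing input, and the restriction $\nu(K_X)\le 1$. So your proposal is a sound summary of the standard attack on the general conjecture, but it is not the argument the paper uses for the cases it resolves, and, as you concede, it does not close the general case.
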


In this paper, we study the abundance conjecture and the associated Iitaka fibration
for a smooth projective variety $X$ with the nef cotangent bundle $\Omega_{X}$,
motivated by the following problems.

\begin{itemize}
\item[(a)] To solve the abundance conjecture for the smooth projective variety $X$
with the nef cotangent bundle $\Omega_{X}$.
\item[(b)] To find a relation between the Fujita decomposition of $\Omega_{X}$
and the Iitaka fibration $f:X \to Y$ of $X$.
Here, the Fujita decomposition decomposes
nef vector bundles into numerically flat vector bundles and generically ample vector bundles
(see Proposition \ref{fujita} for the precise statement).

\item[(c)] To understand a mechanism
by which the variation of the fibers of the Iitaka fibration $f:X \to Y$
behaves differently depending on a certain semipositivity of curvatures of $\Omega_{X}$.

\end{itemize}

The first main result of this paper is to solve the abundance conjecture
for a compact K\"ahler manifold $X$ such that $\Omega_{X}$ is nef and $\nu(K_X) $ is $0$ or $ 1$,
equivalently,
the second Chern class $c_{2}(X)$ vanishes (see the theorems below).
Here $\nu(K_X) $ denotes the numerical dimension of the canonical bundle $K_{X}$.
Additionally, in this case,
we reveal a detailed structure of the Iitaka fibration $f: X \to Y$
(see Theorem \ref{Structure_compact_kahler}, cf.\,\cite{Hor13}).
We emphasize that Theorem \ref{thm-main2} is valid not only for
smooth projective varieties but also compact K\"ahler manifolds.

\begin{thm}\label{thm-main}
Let $X$ be a compact K\"ahler manifold with the nef canonical bundle $K_X$.
Then, the following conditions are equivalent:
\begin{itemize}
\item[$(1)$] The second Chern class $c_2(X) $ vanishes in $H^{2,2}(X, \mathbb{R})$.
\item[$(2)$]  $\Omega_{X}$ is nef, and $\nu(K_X)$ is $0$ or $1$.
\end{itemize}
\end{thm}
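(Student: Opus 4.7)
Assume $\Omega_X$ is nef and $\nu(K_X) \leq 1$. I would apply the Fujita decomposition (Proposition \ref{fujita}) to $\Omega_X$, producing a short exact sequence
\[
0 \longrightarrow \mathcal{F} \longrightarrow \Omega_X \longrightarrow \mathcal{E} \longrightarrow 0
\]
with $\mathcal{F}$ Hermitian flat (so $c_1(\mathcal{F}) = c_2(\mathcal{F}) = 0$ in cohomology) and $\mathcal{E}$ generically ample. Whitney's formula yields $c_1(\mathcal{E}) = c_1(K_X)$ and $c_2(X) = c_2(\mathcal{E})$, so it suffices to show $\rank(\mathcal{E}) \leq 1$. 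For this, I combine the Fulton--Lazarsfeld Schur positivity for the nef bundle $\mathcal{E}$,
\[
c_2(\mathcal{E}) \cdot \omega^{n-2} \geq 0, \qquad \big(c_1(\mathcal{E})^2 - c_2(\mathcal{E})\big) \cdot \omega^{n-2} \geq 0,
\]
with the vanishing $c_1(\mathcal{E})^2 \cdot \omega^{n-2} = K_X^2 \cdot \omega^{n-2} = 0$ coming from $\nu(K_X) \leq 1$. Both pairings then vanish, and the standard rigidity criterion (a nef vector bundle with these Chern pairings vanishing is numerically flat) forces $\mathcal{E}$ to be numerically flat if $\rank(\mathcal{E}) \geq 2$, contradicting generic ampleness. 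Hence $\rank(\mathcal{E}) \leq 1$ and $c_2(X) = c_2(\mathcal{E}) = 0$.

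\textbf{Strategy for $(1) \Rightarrow (2)$.} Assume $K_X$ is nef and $c_2(X) = 0$ in $H^{2,2}(X,\mathbb{R})$. First, I would apply Miyaoka's inequality
\[
\big(3c_2(X) - c_1(X)^2\big) \cdot \omega^{n-2} \geq 0,
\]
valid on compact K\"ahler manifolds with nef $K_X$ via the generic semi-positivity of $\Omega_X$. Combined with $c_2(X) = 0$ and the nefness of $K_X$, this forces $c_1(X)^2 \cdot \omega^{n-2} = 0$ for every K\"ahler $\omega$, hence $\nu(K_X) \leq 1$. Second, to obtain nefness of $\Omega_X$, I would study the Harder--Narasimhan filtration $0 = E_0 \subset \cdots \subset E_\ell = \Omega_X$ with respect to $\omega$: generic semi-positivity gives non-negative slopes $\mu_i$ for the semistable graded pieces $\mathcal{G}_i$, each satisfying the Bogomolov--Gieseker--L\"ubke inequality $\Delta(\mathcal{G}_i) \cdot \omega^{n-2} \geq 0$. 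The Bogomolov identity decomposing $\Delta(\Omega_X) \cdot \omega^{n-2}$ into $\sum (r/r_i)\Delta(\mathcal{G}_i) \cdot \omega^{n-2}$ plus slope-jump corrections, together with $\Delta(\Omega_X) \cdot \omega^{n-2} = 2n c_2(X) \cdot \omega^{n-2} - (n-1) c_1(X)^2 \cdot \omega^{n-2} = 0$, should force each contribution to vanish. The equality case of Bogomolov--L\"ubke (Uhlenbeck--Yau / Bando--Siu) then identifies each $\mathcal{G}_i$, and hence $\Omega_X$ itself (via the collapse of the slope jumps), as projectively Hermitian flat. Writing $\Omega_X = L \otimes V$ with $V$ unitary flat and $nL \equiv K_X$ nef, the nefness of $\Omega_X$ follows.

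\textbf{Main obstacle.} The hardest step is the nefness conclusion in $(1) \Rightarrow (2)$: extracting fibrewise rigidity of $\Omega_X$ from the cohomological vanishing $c_2(X) = 0$ in the compact K\"ahler (non-projective) setting. This requires a careful Harder--Narasimhan analysis for reflexive sheaves, control of the slope-jump terms in the Bogomolov identity (whose signs are not automatic from Hodge index alone), and the extension of Uhlenbeck--Yau / Bando--Siu Hermite--Einstein theory to arbitrary K\"ahler polarizations on compact K\"ahler manifolds.
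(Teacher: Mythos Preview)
Your $(2)\Rightarrow(1)$ argument has a genuine gap: the ``standard rigidity criterion'' you invoke is false. A nef bundle of rank $\geq 2$ with $c_2\cdot\omega^{n-2}=(c_1^2-c_2)\cdot\omega^{n-2}=0$ need not be numerically flat---numerical flatness requires $c_1=0$, not merely $c_1^2\cdot\omega^{n-2}=0$. Even reading ``numerically flat'' as ``projectively numerically flat'', that condition is perfectly compatible with generic ampleness when $c_1\cdot\omega^{n-1}>0$ (a projectively flat bundle with positive slope restricts to an ample bundle on a general curve), so no contradiction arises and you cannot conclude $\rk\mathcal{E}\leq 1$. The paper avoids the Fujita decomposition entirely here: the DPS94 inequalities give $0\leq c_2(X)\omega^{n-2}\leq c_1(K_X)^2\omega^{n-2}=0$ directly, and then Theorem~\ref{generically_nef_reflexive}(1), applied to $\Omega_X$ with the polarization $(c_1(K_X)+\varepsilon\{\omega\})^{n-2}$, upgrades this to $c_2(X)=0$ as a class.

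Your $(1)\Rightarrow(2)$ outline is close in spirit to the paper but the endgame is wrong. The slope jumps do \emph{not} collapse, and $\Omega_X$ is \emph{not} globally projectively Hermitian flat when $\nu(K_X)=1$: for a smooth torus fibration $f:X\to C$ over a curve of genus $\geq 2$ the HN filtration of $\Omega_X$ has the two pieces $f^*\Omega_C$ (positive slope) and $\Omega_{X/C}$ (slope zero), so $\Omega_X$ is not semistable and cannot be written as $L\otimes V$ with $V$ unitary flat. What the careful HN analysis (Theorem~\ref{Cao13_revisit}, reproducing Cao) actually yields is that each graded piece $\mathcal{G}_i$ is projectively numerically flat with $c_1(\mathcal{G}_i)$ a nonnegative multiple of $c_1(K_X)$; hence each $\mathcal{G}_i$ is nef, and $\Omega_X$ is nef as an iterated extension of nef bundles. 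The sign control on the cross terms---which you correctly flag as the obstacle---is obtained by working with the polarization $(c_1(K_X)+\varepsilon\{\omega\})^{n-1}$ rather than $\{\omega\}^{n-1}$. For the bound $\nu(K_X)\leq 1$, the paper does not use Miyaoka's inequality but instead observes (again via Theorem~\ref{Cao13_revisit}) that if $\nu(K_X)\geq 2$ there is a nef line subbundle $L\subset\Omega_X$ with $c_1(L)=c_1(K_X)$, and then Bogomolov--Sommese vanishing rules out such an $L$.
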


\begin{thm}\label{thm-main2}
Let $X$ be a compact K\"ahler manifold with the nef cotangent bundle $\Omega_{X}$.
If $\nu(K_X) =0$ or $\nu(K_X) =1$, then the canonical bundle $K_X$ is semi-ample.
\end{thm}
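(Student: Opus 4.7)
The plan is to combine Theorem \ref{thm-main} with the Fujita-type decomposition of the nef cotangent bundle $\Omega_X$ in order to produce a fibration whose structure forces semi-ampleness of $K_X$. By Theorem \ref{thm-main}, the hypothesis that $\Omega_X$ is nef with $\nu(K_X)\in\{0,1\}$ is equivalent to $c_2(X)=0$ together with the nefness of $K_X$, so throughout one may assume $c_2(X)=0$. The argument then splits naturally along the two possible values of $\nu(K_X)$.

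When $\nu(K_X)=0$, the canonical bundle is numerically trivial. Since $c_1(\Omega_X)=c_1(K_X)\equiv 0$ and $\Omega_X$ is nef, $\Omega_X$ is numerically flat, hence hermitian flat by Demailly--Peternell--Schneider. A compact K\"ahler manifold with hermitian flat cotangent bundle is a finite \'etale quotient of a complex torus, from which $K_X$ is torsion and in particular semi-ample.

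When $\nu(K_X)=1$, the plan is to apply the Fujita decomposition (Proposition \ref{fujita}) to $\Omega_X$. Since $\nu(K_X)=1$, the generically ample piece should have rank one, producing a splitting (or at least a filtration)
\[
\Omega_X \;\cong\; \mathcal N \oplus \mathcal A
\]
with $\mathcal N$ numerically flat of rank $n-1$ and $\mathcal A$ a generically ample line bundle satisfying $c_1(\mathcal A)\equiv c_1(K_X)$. The annihilator of $\mathcal A$ in $T_X$ is a rank $n-1$ subsheaf, and involutivity would be checked by showing that $\mathcal A\subset \Omega_X$ is stable under the exterior derivative, using the hermitian flatness of $\mathcal N$ together with the vanishing $c_2(X)=0$ to kill the relevant second fundamental form. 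A Bogomolov--McQuillan-type algebraicity criterion, fed the positivity of $\mathcal A$ along the leaves, then yields an almost holomorphic fibration $f:X\dashrightarrow C$ to a smooth curve $C$. A general fiber $F$ is smooth with $\Omega_F$ nef (as a quotient of $\Omega_X|_F$) and $K_F\equiv 0$; applying the already-settled $\nu=0$ case to $F$ shows that $K_F$ is torsion of bounded order. Since $\mathcal A$ restricts trivially to fibers but has positive degree on $C$, one obtains $K_X\sim_{\mathbb Q} f^\ast D$ for an ample $\mathbb Q$-divisor $D$ on $C$, and semi-ampleness of $K_X$ follows.

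The hardest step is the involutivity and algebraic integrability of the rank $n-1$ distribution in the $\nu(K_X)=1$ case: involutivity in the K\"ahler (not necessarily projective) setting rests on a careful curvature computation exploiting the sharp equality $c_2(X)=0$, and algebraic integrability requires a K\"ahler analogue of the Bogomolov--McQuillan criterion. A further delicate point is to upgrade the numerical equivalence $\det\mathcal N\equiv 0$ to an honest torsion statement, so that $K_X\equiv f^\ast D$ becomes a genuine $\mathbb Q$-linear equivalence; one expects this to follow from the hermitian flatness of $\mathcal N$ together with the finiteness of its restricted monodromy on each algebraic leaf.
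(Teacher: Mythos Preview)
Your treatment of the case $\nu(K_X)=0$ is fine and matches the paper. The case $\nu(K_X)=1$, however, has a genuine gap at its very first step.

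\textbf{The rank-one claim is false.} You assert that because $\nu(K_X)=1$ the generically ample piece $\mathcal{H}$ in the Fujita decomposition $\Omega_X\cong\mathcal{N}\oplus\mathcal{H}$ must be a line bundle. This is not true. By Proposition~\ref{prop:1} and Theorem~\ref{thm-main3}, the flat part $\mathcal{F}$ of $\Omega_X$ coincides with the flat part of $\Omega_{X/Y}$, and $\rk\mathcal{H}=\dim Y$ holds precisely when the Iitaka fibration $f\colon X\to Y$ is locally trivial. H\"oring's examples \cite{Hor13} (e.g.\ abelian schemes over a curve pulled back from a non-constant map to the moduli space $\mathcal{A}_g$) give smooth projective $X$ with $\Omega_X$ nef, $\nu(K_X)=1$, and $f$ \emph{not} locally trivial; for these, $\rk\mathcal{H}\ge 2$. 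So your decomposition $\Omega_X\cong\mathcal{N}\oplus\mathcal{A}$ with $\mathcal{A}$ a line bundle simply does not exist in general, and the codimension-one foliation you want to integrate is not available. (Note also that Proposition~\ref{fujita} is stated and proved only for projective $X$; its extension to the K\"ahler case is itself nontrivial.)

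\textbf{Even granting a rank-one subsheaf, the integrability step is circular.} Suppose you had some line bundle $\mathcal{A}\subset\Omega_X$ with $\nu(\mathcal{A})=1$ defining a codimension-one foliation. The classical Bogomolov argument produces a fibration from $\kappa(\mathcal{A})\ge 1$, not from $\nu(\mathcal{A})\ge 1$; passing from the latter to the former is precisely the abundance-type statement you are trying to prove. The recent work \cite{PRT21} does close this gap, but its output is that $X$ is \emph{not special} in Campana's sense, not directly a fibration to a curve.

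\textbf{What the paper actually does.} From $c_2(X)=0$ and Theorem~\ref{Cao13_revisit} one extracts a projectively numerically flat (hence projectively flat, by \cite{LOY20}) subbundle $E\subset\Omega_X$ with $\nu(c_1(E))=1$; there is no rank-one claim. Theorem~\ref{not_special} then shows $X$ is not special: after analysing the monodromy representation $\pi_1(X)\to\mathbb{P}GL(r,\mathbb{C})$ via Shafarevich maps, one either reduces (up to \'etale cover) to a line bundle $A\subset\Omega_X$ with $\nu(A)=1$ and invokes \cite{PRT21}, or one finds a Shafarevich map with general-type base via \cite{CCE15}. Campana's core fibration (Theorem~\ref{campana_special}) then gives $f\colon X\dashrightarrow Y$ with $\dim Y\ge 1$ and $\kappa(K_X)=\kappa(K_F)+\dim Y$ for a general fibre $F$. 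Since $\Omega_F$ is nef with $\nu(K_F)\le 1$, induction on $\dim X$ gives $\kappa(K_F)\ge 0$, whence $\kappa(K_X)\ge 1=\nu(K_X)$.
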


Let $X$ be a smooth projective variety such that $K_{X}$ is nef and $\nu(K_X)=1$. 
Theorem \ref{thm-main2} should be compared to the result of \cite{LP}, 
which states that $\kappa(K_{X}) \geq 0$ if $\chi(\mathcal{O}_{X}) \not = 0$. 
Here $\chi(\mathcal{O}_{X}) $ is the holomorphic Euler characteristic. 
In contrast to the assumption of this result, 
the Euler characteristic $\chi(\mathcal{O}_{X}) $ always vanishes 
when $\Omega_{X}$ is nef and $\nu(K_{X}) < \dim X$. 
Further, when $X$ is a surface, our assumption $c_2(X) = 0$ is equivalent to $\chi(\mathcal{O}_{X}) = 0$. 
In this sense, Theorem \ref{thm-main2} partially complements the result of \cite{LP}.

Wu-Zheng and Liu in \cite{WZ02, Liu14} proved that $K_{X}$ is semi-ample
under the assumption that $X$ admits a K\"ahler metric with nonpositive holomorphic bisectional curvature.
This assumption is much stronger than the condition of $\Omega_{X}$ being nef.
In fact, under this assumption,
the Iitaka fibration $f: X \to Y$ is locally trivial
and actually gives a product structure $X \cong F \times Y$ with the fiber $F$,
up to finite \'etale covers of $X$. 
However, when $\Omega_{X}$ is merely nef,
the Iitaka fibration $f:X \to Y$ (even if it exists) is not necessarily locally trivial (see \cite{Hor13}).
This phenomenon constitutes a difference
from the nonpositive holomorphic bisectional curvature
and presents difficulty in accommodating nef cotangent bundles.
We focus on the Fujita decomposition to understand the reason why such a phenomenon occurs.

The second main result gives a characterization
for the Iitaka fibration $f: X \to Y$ to be locally trivial
in terms of the Fujita decomposition and Hermitian flatness (see Theorem \ref{thm-main3}).

Let us explain a detailed motivation for this result.
In the case where $X$ has nonpositive holomorphic bisectional curvature,
the Ricci kernel foliation $\mathcal {R}$ can be constructed, 
and further $\mathcal {R}$ determines the Iitaka fibration $f: X \to Y$,
which is the core of the arguments in \cite{WZ02, Liu14}.
The Ricci kernel foliation $\mathcal {R}$ is
a certain ``flat part'' of the cotangent bundle $\Omega_{X}$.
Then, we can show that $$\mathcal {R}=\mathcal {F}=\Omega_{X/Y},$$
where $\mathcal {F}$ is the flat part of the Fujita decomposition of $\Omega_{X}$
and $\Omega_{X/Y}$ is the relative cotangent bundle of the Iitaka fibration $f: X \to Y$.
Unfortunately, when only the nefness of $\Omega_{X}$ is assumed,
the Ricci kernel foliation is no longer constructed.
Nevertheless, the Fujita decomposition always exists for the nef cotangent bundle $\Omega_{X}$.
Problem (b) asks when the flat part $\mathcal {F}$ of $\Omega_{X}$ might 
determine the Iitaka fibration $f: X \to Y$ with $\mathcal {F}=\Omega_{X/Y}$. 
The second main result (Theorem \ref{thm-main3}) answers this question.

\begin{thm}\label{thm-main3}
Let $X$ be a smooth projective variety with the nef cotangent bundle $\Omega_{X}$.
Assuming the abundance conjecture, we consider the Iitaka fibration $f: X \to Y$ of $X$.
Then, the following conditions are equivalent:
\begin{itemize}
\item[$(1)$]
The flat part $\mathcal {F}$ of the Fujita decomposition of $\Omega_{X}$ coincides with
$\Omega_{X/Y} $.
\item[$(2)$] The relative cotangent bundle $\Omega_{X/Y}$ is Hermitian flat.
\item[$(3)$] The Iitaka fibration $f: X \to Y$ is locally trivial.
\end{itemize}

\end{thm}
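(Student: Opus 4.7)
I would prove the cycle $(3) \Rightarrow (2) \Rightarrow (1) \Rightarrow (3)$.

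For $(3) \Rightarrow (2)$: assume $f$ is locally trivial with typical fibre $F$. A general fibre satisfies $\kappa(F)=0$, and $\Omega_F$ is nef as a quotient of $\Omega_X|_F$, so Theorem \ref{thm-main} yields $c_2(F)=0$ and Theorem \ref{thm-main2} gives $K_F$ semi-ample; combined with $\kappa(F)=0$, we conclude $K_F$ is torsion. The L\"ubke-type inequality applied to the numerically flat bundle $\Omega_F$ (with $c_2=0$) then upgrades nefness to Hermitian flatness of $\Omega_F$, so $F$ is a finite \'etale quotient of a complex torus. Local triviality of $f$ lets me patch the fibrewise flat K\"ahler metrics along a cover trivialising $f$: the transition biholomorphisms of $F$ preserve its (essentially unique) flat metric, so they act by isometries and the fibrewise Hermitian flat metrics glue to a globally defined Hermitian flat metric on $\Omega_{X/Y}$.

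For $(2) \Rightarrow (1)$: Hermitian flatness implies numerical flatness, so by the maximality of the flat part in the Fujita decomposition we obtain $\Omega_{X/Y} \subseteq \mathcal{F}$. For the reverse inclusion, the induced map $\mathcal{F}/(\mathcal{F}\cap \Omega_{X/Y}) \hookrightarrow f^*\Omega_Y$ would exhibit a numerically flat subsheaf of $f^*\Omega_Y$; using the canonical bundle formula together with the fact that $K_X$ is $f$-big, one deduces that the base $Y$ is of general type, so $\Omega_Y$ is generically ample and cannot contain a numerically flat subsheaf. Hence $\mathcal{F} = \Omega_{X/Y}$.

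For the main obstacle $(1) \Rightarrow (3)$: assuming $\mathcal{F}=\Omega_{X/Y}$, the relative cotangent bundle is numerically flat. After replacing $X$ by a suitable finite \'etale cover, the Demailly--Peternell--Schneider theory provides a filtration of $\Omega_{X/Y}$ by Hermitian flat subbundles. The plan is to dualise this to obtain a flat structure on $T_{X/Y}$, use it to split the exact sequence
\[
0 \to \Omega_{X/Y} \to \Omega_X \to f^*\Omega_Y \to 0,
\]
and extract a holomorphic Ehresmann connection for $f$ whose horizontal distribution is involutive. Horizontal lifts of local vector fields on $Y$ then produce biholomorphisms between nearby fibres, yielding local trivialisations of $f$.

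The hardest step will be upgrading the numerical-flatness of $\Omega_{X/Y}$ to a genuine flat splitting of the sequence above with an \emph{involutive} horizontal distribution: involutivity is a differential-geometric condition encoded by the vanishing of a Kodaira--Spencer type obstruction, whereas numerical flatness is cohomological. I would attack this by analysing how the Hermitian-flat filtration of $\Omega_{X/Y}$ interacts with the Atiyah class of $f$, and by invoking Bando--Siu/Uhlenbeck--Yau existence of Hermite--Einstein metrics to make the flat structure compatible with the Chern connection of $\Omega_X$; the vanishing of the Kodaira--Spencer map of $f$ on fibres (torus fibres with trivial $\Omega_F$) should then follow from this compatibility.
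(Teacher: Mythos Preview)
Your proposal has a genuine gap in the crucial step $(1)\Rightarrow(3)$, and the other implications are also more fragile than you recognise.

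\textbf{The main gap: $(1)\Rightarrow(3)$.} You correctly identify that one needs to split the tangent sequence and kill the Kodaira--Spencer map, but you treat this as the hard analytic heart of the proof and propose Atiyah-class/Hermite--Einstein machinery to get there. The paper's point is that this splitting is \emph{automatic} and comes for free from foliation theory: for the cotangent bundle specifically, the Fujita decomposition $0\to\mathcal H\to\Omega_X\to\mathcal F\to 0$ always splits (Proposition~\ref{fujita}(b)(2)). The reason is that the dual $\mathcal F^*\subset T_X$ is the maximal destabilising subsheaf with $c_1=0$, and since $K_X$ is nef, the results of Loray--Pereira--Touzet and Pereira--Touzet force $\mathcal F^*$ to be a regular foliation with torsion canonical class, yielding a holomorphic splitting $T_X=\mathcal F^*\oplus\mathcal G$. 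Under condition~(1) this \emph{is} the tangent sequence of $f$, so the Kodaira--Spencer map (the connecting map of that sequence) vanishes identically, and $f$ is locally trivial. Your proposed route via DPS filtrations of $\Omega_{X/Y}$ does not produce this global holomorphic splitting of $T_X$, and involutivity of an Ehresmann-type horizontal distribution is not something you can extract from numerical flatness alone.

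\textbf{On $(2)\Rightarrow(1)$.} Your argument writes ``$\Omega_{X/Y}\subseteq\mathcal F$'' and intersects them inside $\Omega_X$, but both are \emph{quotients} of $\Omega_X$, not subsheaves, so this needs reformulation (dually, or via the splitting above). More seriously, your claim ``$K_X$ is $f$-big, hence $Y$ is of general type, hence $\Omega_Y$ is generically ample'' is wrong on two counts: for the Iitaka fibration $K_X$ is $f$-numerically trivial, not $f$-big; and general type alone does not give generic ampleness of $\Omega_Y$. The paper instead invokes H\"oring's structure theorem to get $K_Y$ \emph{ample}, then uses the K\"ahler--Einstein metric on $Y$ to obtain semistability of $\Omega_Y$, and finally Miyaoka's criterion to conclude $f^*\Omega_Y$ is generically ample. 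A snake-lemma comparison then identifies the flat parts of $\Omega_X$ and $\Omega_{X/Y}$ (Proposition~\ref{prop:1}), making $(1)\Leftrightarrow(2)$ immediate.

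\textbf{On $(3)\Rightarrow(2)$.} Your fibrewise-gluing argument is not rigorous: transition automorphisms of a torus need not be isometries for a given flat metric. The paper avoids this entirely: local triviality forces $c_1(\Omega_{X/Y})=0$, so the nef bundle $\Omega_{X/Y}$ is numerically flat; by Proposition~\ref{prop:1} it then equals the flat part $\mathcal F$ of $\Omega_X$, which is Hermitian flat by Proposition~\ref{fujita}(b)(1).
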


Assuming the abundance conjecture for a smooth projective variety $X$ with nef cotangent bundle,
H\"oring in \cite{Hor13} proved that $f: X \to Y$ is an abelian group scheme (up to finite \'etale covers), i.e., that there exists a fibration $g: Y \to Z$ onto a subvariety $Z$
in the fine moduli of polarized abelian varieties with level structure
so that $f: X \to Y$ is recovered by the fiber product of the universal family via $g: Y \to Z$.
From this viewpoint, the local triviality of $f: X \to Y$ can be rephrased as $Z$ being the one point.
Hence, the dimension $\dim Z$
can be interpreted as measuring the variation of the fibers.
Theorem \ref{thm-main3} relates the condition of $\dim Z=0$ to a certain flatness of $\Omega_{X/Y}$.
Problem (c) asks a more detailed mechanism of the variation
of the fibers of the Iitaka fibration $f: X \to Y$.
The third main result answers this question in terms of the Fujita decomposition.

\begin{thm}\label{thm-flat}
Under the same situation as in Theorem \ref{thm-main3},
we consider the Fujita decomposition of the nef cotangent bundle $\Omega_{X}$:
\begin{align*}
0 \to \mathcal{H} \to \Omega_{X} \to \mathcal{F} \to 0, 
\end{align*}
where $\mathcal{F}$ is a Hermitian flat vector bundle and
$\mathcal{H}$ is a generically ample vector bundle.

$(1)$ Let $l$ be the rank of maximal Hermitian flat subbundles of $\mathcal{H}|_{X_{y}}$,
where $\mathcal{H}|_{X_{y}}$ is the restriction of $ \mathcal{H}$
to the fiber $X_{y}:=f^{-1}(y)$ at a general point $ y\in Y$.
Then, we have $\dim Z = \rk \mathcal{H} - l$.

$(2)$ The Iitaka fibration $f: X \to Y$ is locally trivial
if $\Omega_{X}$ is semipositive in the following sense:
the hyperplane bundle $\mathcal{O}_{\mathbb{P}(\Omega_{X})}(1)$
admits a smooth Hermitian metric with semipositive Chern curvature
on the projective space bundle $\mathbb{P}(\Omega_{X})$.
\end{thm}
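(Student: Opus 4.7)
\emph{Overall strategy.} By H\"oring's structure theorem quoted above, after a finite \'etale cover of $X$ the Iitaka fibration fits into a Cartesian square
\[
\begin{CD}
X @>p>> \mathcal{A} \\
@VfVV @VV\pi V \\
Y @>>g> Z
\end{CD}
\]
with $\pi\colon\mathcal{A}\to Z$ the restriction of the universal family of polarized abelian varieties with level structure. A general fiber $X_y\cong\mathcal{A}_{g(y)}$ is an abelian variety, and the Kodaira--Spencer map of $f$ factors as $\rho_f=\rho_\pi\circ g_*$, where $\rho_\pi\colon T_Z|_{g(y)}\hookrightarrow H^1(X_y,T_{X_y})$ is injective by the fine moduli property. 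Hence $\rk\rho_f=\dim Z$ at a general $y$. Both parts of the theorem will be proved by fiberwise analysis at such a point.

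\emph{Plan for Part (1).} Restrict both the Fujita sequence and the relative cotangent sequence to $X_y$:
\[
0\to\mathcal{H}|_{X_y}\to \Omega_X|_{X_y}\to\mathcal{F}|_{X_y}\to 0, \qquad 0\to f^*\Omega_Y|_{X_y}\to \Omega_X|_{X_y}\to \Omega_{X_y}\to 0.
\]
Both end terms of the second sequence are trivial on the abelian variety $X_y$, and $\mathcal{F}|_{X_y}$ is Hermitian flat. The extension class of the second sequence is $\rho_f$, so splitting $T_Y|_y=\ker\rho_f\oplus I$ with $\dim I=\dim Z$ yields
\[
\Omega_X|_{X_y}\cong (\ker\rho_f)^{*}\otimes\mathcal{O}_{X_y}\oplus V,
\]
where $V$ is a nonsplit Atiyah-type extension of $\Omega_{X_y}$ by $I^{*}\otimes\mathcal{O}_{X_y}$ with injective extension class. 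A short Narasimhan--Seshadri argument then shows that the maximal Hermitian flat subbundle of such a $V$ is precisely its sub $I^{*}\otimes\mathcal{O}_{X_y}$ of rank $\dim Z$: any larger Hermitian flat sub would split off the part mapping nontrivially to the trivial quotient, contradicting injectivity of the extension class. Combined with a careful comparison of $\mathcal{H}|_{X_y}\hookrightarrow\Omega_X|_{X_y}$ with the Hermitian flat quotient $\mathcal{F}|_{X_y}$ --- which in particular shows that $\mathcal{H}|_{X_y}$ inherits an Atiyah-type structure with the same injective class --- one obtains $l=\rk\mathcal{H}-\dim Z$.

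\emph{Plan for Part (2).} The semipositivity assumption on $\mathcal{O}_{\mathbb{P}(\Omega_X)}(1)$ is equivalent to Griffiths semipositivity of $\Omega_X$, so $\Omega_X|_{X_y}$ is Griffiths semipositive on $X_y$. Since $X_y$ is abelian, $c_1(\Omega_X|_{X_y})=K_{X_y}=0$. The trace of the Chern curvature is then a semipositive $(1,1)$-form with vanishing cohomology class, which forces it to vanish pointwise; a Griffiths semipositive curvature tensor with vanishing trace is itself identically zero. Thus $\Omega_X|_{X_y}$ is Hermitian flat. Since $\det\mathcal{F}$ is numerically trivial, we have $c_1(\mathcal{H})\equiv K_X$ numerically, so $c_1(\mathcal{H}|_{X_y})=K_{X_y}=0$; the induced metric on $\mathcal{H}|_{X_y}\subset\Omega_X|_{X_y}$ is Griffiths seminegative with vanishing first Chern class, and the dual pointwise argument shows it is flat. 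Hence $l=\rk\mathcal{H}$, and Part (1) forces $\dim Z=0$, i.e., $f$ is locally trivial.

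\emph{Main obstacle.} The hardest step is the precise identification of $\mathcal{H}|_{X_y}$ in Part (1) and the computation of its maximal Hermitian flat subbundle: one must show that $\mathcal{H}|_{X_y}$ inherits a nonsplit Atiyah-type extension whose non-flat rank is exactly $\dim Z$. This demands a detailed understanding of how the global Hermitian flat quotient $\mathcal{F}$ cuts through the Kodaira--Spencer obstruction on a general fiber, which I anticipate will rely on H\"oring's description of $f$ as an abelian group scheme, the uniqueness of the Fujita decomposition, and the Narasimhan--Seshadri correspondence applied to the abelian fiber $X_y$.
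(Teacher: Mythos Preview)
Your Part~(2) contains a genuine gap. You assert that semipositivity of $\mathcal{O}_{\mathbb{P}(\Omega_X)}(1)$ is \emph{equivalent} to Griffiths semipositivity of $\Omega_X$, but this is not known: one direction (Griffiths semipositive $\Rightarrow$ $\mathcal{O}_{\mathbb{P}(E)}(1)$ semipositive, via the quotient metric) is standard, while the converse is essentially the semipositive form of Griffiths' conjecture and remains open. A smooth semipositive metric on $\mathcal{O}_{\mathbb{P}(E)}(1)$ need not be the metric induced by any Hermitian metric on $E$, so there is no direct way to pull it back. Your subsequent trace argument on $X_y$ is fine once Griffiths semipositivity of $\Omega_X|_{X_y}$ is established, but you have not established it.

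The paper circumvents this via positivity of direct images. Writing $p\colon\mathbb{P}(\Omega_X)\to X$, one has
\[
p_{*}\bigl(K_{\mathbb{P}(\Omega_X)/X}\otimes\mathcal{O}_{\mathbb{P}(\Omega_X)}(n+1)\bigr)\;\cong\;\Omega_X\otimes K_X,
\]
and the Berndtsson--P\u{a}un/P\u{a}un--Takayama theory shows that when $\mathcal{O}_{\mathbb{P}(\Omega_X)}(1)$ carries a smooth semipositive metric, this direct image is Griffiths semipositive. Restricting to a fiber $X_y$, where $K_X|_{X_y}$ is numerically trivial, then yields Griffiths semipositivity of $\Omega_X|_{X_y}$, and hence Hermitian flatness of $\mathcal{H}|_{X_y}$; Part~(1) finishes the argument. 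The twist by $K_X$ is exactly what makes the direct-image machinery apply and what disappears upon restriction to the Iitaka fiber --- this is the key idea you are missing.

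For Part~(1), your Atiyah-extension approach is in the same spirit as the paper's argument but packaged differently. The paper proceeds by pushing forward the full commutative diagram relating the Fujita and relative-cotangent sequences along $f$, identifying $\ker(\rho_f)_y$ with a quotient of $H^0(X_y,\mathcal{H}^{*}|_{X_y})$, and reducing to the formula $\dim Z=\rk\mathcal{H}-h^0(X_y,\mathcal{H}^{*}|_{X_y})$. It then proves a clean lemma: any Hermitian flat subbundle of an extension of a trivial bundle by a trivial bundle is itself trivial. Applied iteratively to a filtration of the numerically flat bundle $\mathcal{H}|_{X_y}$ with Hermitian flat graded pieces, this identifies $h^0$ with the rank $l$ of the maximal Hermitian flat subbundle. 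Your direct analysis of the Atiyah-type extension could likely be made to work, but the step you flag as the ``main obstacle'' --- showing that $\mathcal{H}|_{X_y}$ inherits an extension with injective class of the correct rank --- is precisely where the paper's push-forward bookkeeping and the trivial-sub lemma do the work more transparently.
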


We emphasize that the conditions of differential geometry
(e.g., Hermitian flatness or semipositivity) are related to the variation of the fibers,
which is an interesting point.
As one of benchmarks, Theorem \ref{thm-flat} (2) should be compared to \cite[1.3 Theorem]{Hor13},
which states that  $f: X \to Y$ is locally trivial  if $\Omega_{X}$ is semi-ample.
The semipositivity of $\mathcal{O}_{\mathbb{P}(\Omega_{X})}(1)$ in Theorem \ref{thm-flat} (2)
is weaker than the condition of $X$ admitting nonpositive bisectional curvature
or $\Omega_{X}$ being semi-ample.
Therefore, under the abundance conjecture,
Theorem \ref{thm-flat} can be seen as a generalization of
\cite{WZ02, Liu14} and \cite[1.3. Theorem]{Hor13} to the semipositive case.

We finally give a technical remark on the condition $c_2(X)=0$,
which enables us to relax the assumption of Theorem \ref{thm-main}.
Note that the cotangent bundle $\Omega_{X}$ is always generically nef when $K_X$ is nef by \cite{Miyaoka87}.
As an application, we obtain a structure theorem
for a compact K\"ahler manifold $X$ such that $-K_X$ is nef and $c_2(X)=0 $. %
When $X$ is projective, Corollary \ref{Cao_Ou_structure_thm} was already proved in \cite{Cao13} and \cite{Ou17} 
by using the theory of extremal rays. %
Our method is different from their proof.

\begin{thm}
\label{generically_nef_reflexive}
Let $(X,\omega)$ be a compact K\"ahler manifold of dimension $n$, and $\mathcal{E}$ be a reflexive coherent sheaf on $X$. 
Assume that $\mathcal{E}$  is generically nef and $c_1(\mathcal{E})$ is nef.

$(1)$
There exists a positive number $ \varepsilon _{0}$ such that
$\mathcal{E}$ is a nef vector bundle and  $c_2(\mathcal{E})=0$, 
if $c_2(\mathcal{E}) (c_1(\mathcal{E}) +  \varepsilon  \{\omega\})^{n-2}  = 0$ holds for some $ \varepsilon _{0}> \varepsilon  >0$.

$(2)$
Assume that $X$ is projective.
The sheaf $\mathcal{E}$ is a nef vector bundle and $c_2(\mathcal{E})=0$
if $c_2(\mathcal{E})A^{n-2}  = 0$ holds for some ample line bundle $A$.

\end{thm}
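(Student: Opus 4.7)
The plan is to use the Harder--Narasimhan (HN) filtration of $\mathcal{E}$, apply Bogomolov's inequality to each semistable factor, and combine these with the nefness of $c_1(\mathcal{E}) = \sum c_1(Q_i)$ and the vanishing hypothesis to force each Bogomolov inequality to be an equality; then invoke the Bando--Siu extension of the Kobayashi--Hitchin correspondence to obtain projective Hermitian flatness of each factor, and finally conclude via extensions.

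First I would take the HN filtration $0 \subsetneq \mathcal{E}_1 \subsetneq \cdots \subsetneq \mathcal{E}_k = \mathcal{E}$ with respect to $\omega$ in case (1) or $A$ in case (2), with semistable quotients $Q_i = \mathcal{E}_i / \mathcal{E}_{i-1}$ of strictly decreasing slopes $\mu_1 > \cdots > \mu_k$, all nonnegative by generic nefness, and record Bogomolov's inequality $(2 r_i c_2(Q_i) - (r_i-1) c_1(Q_i)^2) \cdot H^{n-2} \geq 0$ for each $i$. In case (1), I would then expand the hypothesis $c_2(\mathcal{E}) \cdot (c_1(\mathcal{E}) + \varepsilon \{\omega\})^{n-2} = 0$ as a polynomial in $\varepsilon$ whose coefficients are nonnegative intersection numbers (using the Bogomolov inequalities on the $Q_i$ together with the nefness of $c_1(\mathcal{E})$ and the positivity of $\omega$); each coefficient must then vanish, so in particular $c_2(\mathcal{E}) \cdot \omega^{n-2} = 0$ and $c_1(\mathcal{E})^2 \cdot \omega^{n-2} = 0$. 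Using the decompositions $c_2(\mathcal{E}) = \sum_i c_2(Q_i) + \sum_{i<j} c_1(Q_i) c_1(Q_j)$ and $c_1(\mathcal{E})^2 = \sum_i c_1(Q_i)^2 + 2 \sum_{i<j} c_1(Q_i) c_1(Q_j)$, an algebraic rearrangement combined with the nefness of $c_1(\mathcal{E})$ (via Hodge-index / Khovanskii--Teissier on the K\"ahler class $\omega$) forces each Bogomolov inequality to be tight and each cross-term $c_1(Q_i) c_1(Q_j) \cdot \omega^{n-2}$ to vanish. Case (2) is analogous but simpler, since the polarization is fixed.

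By Bando--Siu, each $\omega$-polystable reflexive sheaf $Q_i$ with $\Delta(Q_i) \cdot \omega^{n-2} = 0$ admits an admissible Hermitian--Einstein metric that is projectively flat on its locally free locus; the projective flatness extends across the codimension $\geq 3$ singular locus to make $Q_i$ a projectively Hermitian-flat vector bundle, and in particular $2 r_i c_2(Q_i) = (r_i - 1) c_1(Q_i)^2$ as classes in $H^{2,2}(X, \mathbb{R})$. Upgrading the numerical vanishings from the previous paragraph to the class vanishings $c_1(Q_i)^2 = 0$ and $c_1(Q_i) c_1(Q_j) = 0$ (again using Hodge-index arguments leveraging the nefness of $c_1(\mathcal{E})$) yields $c_2(Q_i) = 0$ and then $c_2(\mathcal{E}) = 0$ as classes. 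Each $Q_i$, being projectively Hermitian flat with nef determinant, is a nef vector bundle, so $\mathcal{E}$ is a nef vector bundle as a successive extension.

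The main difficulty is the algebraic propagation of the single vanishing $c_2(\mathcal{E}) \cdot H^{n-2} = 0$ into the factor-wise Bogomolov equalities together with the cross-term vanishings; this requires a delicate interplay of Bogomolov's inequality on each $Q_i$ (which controls $c_2(Q_i)$ in terms of $c_1(Q_i)^2$), the global nefness of the sum $c_1(\mathcal{E}) = \sum c_1(Q_i)$ (which is not known factor-by-factor), and Hodge-index inequalities on the K\"ahler manifold. A secondary but essential technical point in case (1) is the extension of the Kobayashi--Hitchin correspondence to reflexive sheaves on non-projective compact K\"ahler manifolds, for which we rely on the Bando--Siu theorem.
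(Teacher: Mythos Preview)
Your proposal assembles the right ingredients (Harder--Narasimhan filtration, Bogomolov--Gieseker on the graded pieces, Hodge index) but the central step is a genuine gap. You assert that from the single vanishing $c_2(\mathcal{E})\cdot H^{n-2}=0$ an ``algebraic rearrangement combined with the nefness of $c_1(\mathcal{E})$'' forces each Bogomolov inequality to be tight and each cross term $c_1(Q_i)c_1(Q_j)\cdot H^{n-2}$ to vanish. This does not follow. The individual $c_1(Q_i)$ are \emph{not} nef; only their sum $c_1(\mathcal{E})$ is. Hence $c_1(Q_i)^2\cdot H^{n-2}$ can be negative and the cross terms have no sign, so the naive rearrangement
\[
0=c_2(\mathcal{E})H^{n-2}\ \ge\ \sum_i \tfrac{r_i-1}{2r_i}c_1(Q_i)^2 H^{n-2}+\sum_{i<j}c_1(Q_i)c_1(Q_j)H^{n-2}
\]
does not squeeze anything. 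The paper handles exactly this difficulty by a case split on $\nu:=\nu(c_1(\mathcal{E}))$ and, crucially, by taking the HN filtration with respect to the \emph{moving} polarization $\alpha_\varepsilon^{n-1}=(c_1(\mathcal{E})+\varepsilon\{\omega\})^{n-1}$ rather than $\{\omega\}^{n-1}$. With this choice one gets the orthogonality relations $c_1(Q_i)c_1(\mathcal{E})^{\nu}\omega^{n-1-\nu}=0$ (Claim~\ref{label_c}), and then the Hodge-index lemma (Lemma~\ref{linear_algebra}) applied to the Lorentzian form $q(\eta_1,\eta_2)=\eta_1\eta_2\alpha_\varepsilon^{n-2}$ with the distinguished positive vector $c_1(\mathcal{E})$ yields the sign control that your sketch is missing. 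The ensuing estimates (Claim~\ref{nu2_inequality} and the sub-cases $a_2>0$, $r_1\neq 1$, etc.) are precisely the ``delicate interplay'' you flag as the main difficulty but do not carry out.

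A related problem is your claim in case (1) that the coefficients $c_2(\mathcal{E})\,c_1(\mathcal{E})^k\{\omega\}^{n-2-k}$ of the $\varepsilon$-expansion are individually nonnegative. In the K\"ahler setting this is not known: it is essentially Conjecture~\ref{Miyaoka_thm}(1) for mixed nef/K\"ahler classes, which the paper records as open. The paper does \emph{not} show positivity coefficient by coefficient; it bounds $c_2(\mathcal{E})\alpha_\varepsilon^{n-2}$ directly. Two further points: the HN quotients $Q_i$ are only semistable, not polystable, so Bando--Siu does not apply as stated (the paper instead invokes the characterization of projectively numerically flat sheaves in Theorem--Definition~\ref{numerically_projectively_flat}); and you have not addressed why the reflexive sheaf $\mathcal{E}$ is actually locally free, which the paper extracts from Lemma~\ref{Technical_Lemma} once the graded pieces are shown to be vector bundles with $\codim\Supp(\mathcal{G}^{**}/\mathcal{G})\ge 3$.
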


\begin{cor}
\label{Cao_Ou_structure_thm}
Let $X$ be a compact K\"ahler manifold with the nef anti-canonical bundle $-K_X$.
If $c_2(X)=0 \in H^{2,2}(X, \mathbb{R})$,
then the tangent bundle $T_{X}$ is nef.
Moreover, there exists a finite \'etale cover $\pi: X' \rightarrow X$ such that $X'$ is a torus
or a $\mathbb{P}^1$-fiber bundle over a torus.
\end{cor}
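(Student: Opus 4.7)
The plan is to use Theorem \ref{generically_nef_reflexive}(1) to upgrade the hypothesis on $-K_X$ to nefness of the full tangent bundle $T_X$, and then to invoke the structure theorem for compact K\"ahler manifolds with nef $T_X$, refining its output by exploiting $c_2(X) = 0$ to force the Albanese fibers to be either trivial or $\mathbb{P}^1$.

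For the first step, I would apply Theorem \ref{generically_nef_reflexive}(1) with $\mathcal{E} := T_X$. The sheaf $T_X$ is locally free (hence reflexive), $c_1(T_X) = -K_X$ is nef by assumption, and the required intersection vanishing $c_2(T_X) \cdot (c_1(T_X) + \varepsilon\{\omega\})^{n-2} = 0$ is automatic from $c_2(X) = 0$ in $H^{2,2}(X,\mathbb{R})$. The remaining hypothesis, that $T_X$ is generically nef when $-K_X$ is nef, is due to Cao in the projective case and can be obtained analogously in the compact K\"ahler setting by a Harder--Narasimhan argument: any destabilizing quotient would produce a torsion-free sheaf whose determinant is not nef, contradicting the nefness of $-K_X = \det T_X$ through a slope computation against a movable class associated with $\omega$. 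Theorem \ref{generically_nef_reflexive}(1) then yields nefness of $T_X$.

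With $T_X$ nef, the Demailly--Peternell--Schneider structure theorem provides a finite \'etale cover $\pi: X' \to X$ for which the Albanese morphism $\alpha: X' \to A := \mathrm{Alb}(X')$ is a smooth surjective fibration whose general fiber $F$ is a rationally connected Fano manifold with nef $T_F$. Pulling $c_2(X) = 0$ back via $\pi$ and using the relative tangent sequence $0 \to T_{X'/A} \to T_{X'} \to \alpha^*T_A \to 0$ together with the triviality of $T_A$, Whitney's formula yields $c_2(T_{X'/A}) = c_2(T_{X'}) = 0$, and restriction to $F$ gives $c_2(F) = 0$.

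It remains to argue that $\dim F \leq 1$. If $m := \dim F \geq 2$, then since $-K_F$ is ample and $T_F$ is nef, $T_F$ is $\mu_{-K_F}$-semistable, and the Bogomolov--Gieseker inequality gives
\[
\bigl(2m\, c_2(F) - (m-1)\, c_1(T_F)^2\bigr) \cdot (-K_F)^{m-2} \geq 0.
\]
Substituting $c_2(F) = 0$ produces $-(m-1)(-K_F)^{m} \geq 0$, contradicting $(-K_F)^m > 0$. Hence $F$ is a point (so $X' = A$ is a torus) or $F \cong \mathbb{P}^1$ (so $\alpha: X' \to A$ realizes $X'$ as a $\mathbb{P}^1$-fiber bundle over the torus $A$). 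I expect the main obstacles to lie in verifying (a) generic nefness of $T_X$ in the compact K\"ahler setting and (b) the $\mu_{-K_F}$-semistability of $T_F$ for Fano $F$ with nef $T_F$; both are folklore in the projective case but require careful attention to transfer to the K\"ahler framework.
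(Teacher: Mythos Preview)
Your overall architecture coincides with the paper's: invoke Theorem~\ref{generically_nef_reflexive} to upgrade nefness of $-K_X$ to nefness of $T_X$, apply the Demailly--Peternell--Schneider structure theorem, and then force $\dim F\le 1$ using $c_2(F)=0$. Your obstacle (a) is not a problem: generic nefness of $T_X$ on compact K\"ahler manifolds with $-K_X$ nef is precisely \cite[Theorem~1.2]{Cao13}, which the paper cites (your sketched slope argument, however, is not a proof of this fact---nefness of $\det T_X$ alone does not prevent a quotient of negative slope).

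The genuine gap is your step (b). The implication ``$-K_F$ ample and $T_F$ nef $\Rightarrow$ $T_F$ is $\mu_{-K_F}$-semistable'' is not folklore and is not known in general: nefness of a bundle does not imply semistability (already $\mathcal{O}\oplus\mathcal{O}(1)$ on $\mathbb{P}^1$). For Fano $F$ with nef $T_F$ this would follow from the Campana--Peternell conjecture (such $F$ would be rational homogeneous, hence K\"ahler--Einstein, hence $T_F$ polystable), but that conjecture is open. So your Bogomolov--Gieseker step cannot be completed as written.

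The paper bypasses this entirely via Proposition~\ref{Fano_c2}. Since $-K_F$ is ample, one has $\nu(c_1(T_F))=\dim F\ge 2$, and the filtration produced in the proof of Theorem~\ref{generically_nef_reflexive} (case $\nu\ge 2$, Theorem~\ref{Ou_chapter6}(1)) gives a line subbundle $\mathcal{E}_1\subset T_F$ with $c_1(\mathcal{E}_1)=c_1(T_F)$ and a numerically flat quotient $T_F/\mathcal{E}_1$ of rank $\dim F-1$. A Fano manifold is simply connected, so this quotient is trivial, forcing $h^0(F,\Omega_F)\ge\dim F-1>0$, which contradicts rational connectedness. This argument needs no semistability hypothesis and closes the proof cleanly.
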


This paper is organized as follows:
In Section \ref{Sec2}, we
summarize various positivity conditions for torsion-free sheaves.
In Section \ref{Abundance_smooth}, we prove Theorems \ref{thm-main}, \ref{thm-main2}
and a structure theorem of the Iitaka fibration $f: X \to Y$ (Theorem \ref{Structure_compact_kahler}).
In Section \ref{sec-fujita}, we prove Theorems \ref{thm-main3} and \ref{thm-flat}.
In Section \ref{Sec5}, we prove Theorem \ref{thm-flat}.
Section \ref{section:appendix} is an appendix where we provide small remarks.

\subsection*{Acknowledgments}
The authors thank Prof.\,Wenhao Ou for some helpful comments about second Chern classes of torsion-free coherent sheaves and giving the reference \cite{Gri10}. 
M.\,I. thanks Dr.\,Masaru Nagaoka for discussions and helpful comments.
He was partially supported by Osaka City University, Advanced Mathematical Institute (MEXT Joint Usage/Research Center on Mathematics and Theoretical Physics, JPMXP0619217849) and
Grant-in-Aid for Early Career Scientists $\sharp$22K13907.
He also thanks the members in RACMaS -Research Alliance Center for Mathematical Sciences-,
Tohoku University.
S.\,M. thanks Prof.\,Xiaojun Wu for discussions about special varieties.
This paper was written during his stay at the University of Bayreuth.
He also thanks the members of the University of Bayreuth for their hospitality.
He was partially supported
by Grant-in-Aid for Scientific Research (B) $\sharp$21H00976
and Fostering Joint International Research (A) $\sharp$19KK0342 from JSPS.

\section{Preliminaries}\label{Sec2}

\subsection{Notation}

Throughout this paper, we work over the field of complex numbers,
and basically follow the notation of \cite{Har77, KM98}.
We interchangeably use the words ``Cartier divisors,'' ``invertible sheaves,'' and ``line bundles,''
and we also identify ``locally free sheaves of finite rank'' with ``vector bundles.''
We assume that all the sheaves are coherent without explicit mention.
For a torsion-free sheaf $\mathcal{F}$ on a normal variety $X$,
we define the dual sheaf $\mathcal{F}^{*}$ and the determinant sheaf $\det \mathcal{F}$ 
by
$$
\mathcal{F}^{*}:= \Hom_{\mathcal{O}_{X}}(\mathcal{F}, \mathcal{O}_{X})
\quad \text{and} \quad
\det \mathcal{F}:= (\Lambda^{\rk \mathcal{F}} \mathcal{F})^{**}.
$$

\subsection{Positivity, flatness, and slope of torsion-free sheaves.}

In this subsection, we summarize some properties defined for vector bundles or torsion-free sheaves.
Throughout this subsection,
let $X$ be a compact K\"ahler manifold of dimension $n$,
and let $E$ be a vector bundle of rank $r$ on $X$.

\begin{defn} [{\cite[Proposition 1.11, Definitions 1,7, 1.9]{DPS94},
\cite[Propositions 2.5, 4.21, 4.22, Corollary 2.7]{Kobayashi}, \cite[Definition 2.1]{Pet1}}]
A vector bundle $E$ is said to be
\begin{enumerate}
\item  \textit{Griffith semipositive} if $E$ admits a smooth Hermitian metric
with Griffith semipositive curvature;
\item \textit{nef} if the hyperplane bundle $\mathcal{O}_{\mathbb{P}(E)}(1)$ is a nef line bundle
on the projective space bundle $\mathbb{P}(E)$;
\item \textit{semipositive} if $\mathcal{O}_{\mathbb{P}(E)}(1)$ admits a smooth Hermitian metric
with a semipositive Chern curvature on the projective space bundle $\mathbb{P}(E)$;

\item \textit{Hermitian flat} if $E$ is Griffiths semipositive and $c_1(E)=0$;
\item  \textit{numerically flat} if $E$ is nef and $c_1(E)=0$;
\item  \textit{flat} if $E$ admits a  connection $\nabla$ whose curvature vanishes;
\item  \textit{projectively Hermitian flat} if $E$ admits a smooth Hermitian metric $h$ such that
the Chern curvature tensor $\Theta_{E,h}$ satisfies $\Theta_{E,h} = \alpha {\rm{Id}}_{E}$ for some 2-form $\alpha$;
\item  \textit{projectively flat} if $E$ admits a  connection $\nabla$ such that
$\nabla^2 = \alpha {\rm{Id}}_{E}$ for some 2-form $\alpha$, 
equivalently, there exists a representation $$\rho : \pi_1 (X)  \rightarrow \mathbb{P}GL(r, \C)$$ such that $\mathbb{P}(E) \cong X_{\univ} \times_{\rho} \mathbb{P}^{r -1}$,
where $X_{\univ}$ is the universal cover of $X$.
\end{enumerate}

We further assume that $X$ is a smooth projective variety.
Let $A_1, \ldots, A_{n-1}$ be ample line bundles on $X$.
Then $E$ is said to be
\begin{enumerate}
\item [(9)] $(A_1, \ldots, A_{n-1})$-\textit{generically ample} (resp.\,$(A_1, \ldots, A_{n-1})$-\textit{generically nef})
if $E|_{C}$ is ample (resp.\,nef) for a curve $C := D_{1} \cap \dots \cap D_{n-1}$
defined by general members $D_{i} \in | m_i A_i|$ and $m_i \gg 0 $.
\end{enumerate}
The vector bundle $E$ is simply called \textit{generically ample} (resp.\,\textit{generically nef})
if $E$ is $(A_1, \ldots, A_{n-1})$-generically ample (resp.\,$(A_1, \ldots, A_{n-1})$-\textit{generically nef})
for any ample line bundles $\{A_i\}_{i=1}^{n-1}$.
\end{defn}

We now summarize relations between a certain flatness and stabilities 
after we review slopes of torsion-free sheaves. 
Let $\mathcal{E}$ be a torsion-free sheaf of rank $r$ on $X$ and
$\alpha \in H^{n-1, n-1}(X, \mathbb{R})$ be a movable class on $X$. %
Then, the {\it slope $\mu_{\alpha}(\mathcal{E})$ of $\mathcal{E}$ with respect to $\alpha$ } is defined by
$$
\mu_{\alpha}(\mathcal{E}):= \frac{c_1(\mathcal{E})  \alpha }{r}
\text{ \quad and \quad } c_1(\mathcal{E}):= c_{1}(\det \mathcal{E}).
$$
The maximal slope $\mu_{\alpha}^{\max}(\mathcal{E})$
(resp.\,the minimal slope $\mu_{\alpha}^{\min}(\mathcal{E})$)
is defined
by the supremum of $\mu_{\alpha}(\mathcal{S})$ for any nonzero subsheaf
$\mathcal{S} \subset \mathcal{E}$
(resp.\,the infimum of $\mu_{\alpha}(\mathcal{Q})$
for any nonzero torsion-free quotient sheaf $\mathcal{E} \to \mathcal{Q} \to 0$).
The sheaf $\mathcal{E} $ is said to be $\alpha$-{\it semistable with respect to $\alpha$}
if $\mathcal{E}=\mathcal{E}_{\max}$.
Here $\mathcal{E}_{\max} \subset \mathcal{E}$, which is called the {\it maximal destabilizing sheaf},
is the maximal subsheaf with respect to the inclusion such that
$\mu_{\alpha}(\mathcal{E}_{\max}) = \mu_{\alpha}^{\max}(\mathcal{E}) $,
which uniquely exists (see \cite[Lemma 7.17]{Kobayashi}).

\begin{thmdefn}
[{\cite[Chapter 4. Theorem 4.1]{Nakayama}, \cite[Proposition 13, Corollary 10]{Wu20}, \cite[Definition 4.1]{LOY20}}]
\label{numerically_projectively_flat}
A torsion-free sheaf $\mathcal{E}$ on $X$ is said to be \textit{projectively numerically flat}
if it satisfies one of the equivalent following conditions:
\begin{enumerate}
\item $\mathcal{E}$ is locally free, and the $\Q$-twisted vector bundle $\mathcal{E} \langle \frac{\det \mathcal{E}^{*}}{r}\rangle $ is nef.
\item $\mathcal{\mathcal{E}}$ is $\{\omega\}^{n-1}$-semistable
and satisfies that
$$
\left( c_2(\mathcal{E}) - \frac{r-1}{2r}c_1(\mathcal{E})^{2}\right)  \{\omega\}^{n-2}=0
$$
for some K\"ahler form $\omega$.
\item $\mathcal{E}$ is locally free, and there exists a filtration of subbundles:
$$
0=:E_0 \subset E_1 \subset \cdots \subset E_l:= \mathcal{E}
$$
such that $G_i := E_i / E_{i-1}$ is a projectively Hermitian flat vector bundle and
$c_1(G_i)/ \rk G_i = c_1(E)/r \in H^{1,1}(X,\R)$ holds for any $i = 1, \ldots, l$.
\end{enumerate}
\end{thmdefn}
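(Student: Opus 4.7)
The plan is to prove the cycle $(3)\Rightarrow(1)\Rightarrow(2)\Rightarrow(3)$, reducing every implication to a statement about the $\mathbb{Q}$-twisted vector bundle $F := \mathcal{E}\langle -\det \mathcal{E}/r\rangle$. The standard Chern-class formula for $\mathbb{Q}$-twists gives $c_1(F)=0$ and $c_2(F) = c_2(\mathcal{E}) - \frac{r-1}{2r}c_1(\mathcal{E})^{2}$, and tensoring by a $\mathbb{Q}$-line bundle preserves slope-semistability. In particular, condition (2) is equivalent to saying that $F$ is $\{\omega\}^{n-1}$-semistable with $c_2(F)\{\omega\}^{n-2}=0$, and the whole theorem reduces to a characterization of \emph{numerical flatness} of $F$ in the $\mathbb{Q}$-twisted sense.

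For $(3)\Rightarrow(1)$, I would induct on the length of the filtration. The hypothesis $c_1(G_i)/\rk G_i = c_1(\mathcal{E})/r$ means that each $\mathbb{Q}$-twist $G_i\langle -\det \mathcal{E}/r\rangle$ has vanishing Chern curvature, so it is Hermitian flat and in particular nef. Since nefness of vector bundles is preserved under extensions (which is visible at the level of $\mathcal{O}_{\mathbb{P}(E)}(1)$ via the projective bundle maps attached to a short exact sequence), nefness propagates up the filtration, giving that $F$ itself is nef.

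For $(1)\Rightarrow(2)$, the assumption says that $F$ is $\mathbb{Q}$-twisted numerically flat. The Demailly--Peternell--Schneider structure theorem for numerically flat bundles, in the $\mathbb{Q}$-twisted setting, yields a filtration of $F$ whose graded pieces are Hermitian flat with trivial Chern classes, so in particular $c_2(F)\{\omega\}^{n-2}=0$. For $\{\omega\}^{n-1}$-semistability, any torsion-free quotient $\mathcal{Q}$ of the nef bundle $F$ has $c_1(\mathcal{Q})$ pseudo-effective, hence $c_1(\mathcal{Q})\{\omega\}^{n-1}\geq 0$; combined with $c_1(F)=0$ this forces $\mu_{\{\omega\}^{n-1}}(\mathcal{S})\leq 0 = \mu_{\{\omega\}^{n-1}}(F)$ for every subsheaf $\mathcal{S}\subset F$.

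The main obstacle is $(2)\Rightarrow(3)$, where I would invoke the Bando--Siu extension of the Donaldson--Uhlenbeck--Yau theorem: any $\{\omega\}^{n-1}$-polystable reflexive sheaf on a compact K\"ahler manifold admits an admissible Hermitian--Einstein metric on its locally free locus. Taking a Jordan--H\"older filtration of $\mathcal{E}$ and its polystable refinement, each stable factor has the same slope as $\mathcal{E}$, and the vanishing of the twisted discriminant forces the Bogomolov--Gieseker inequality to degenerate to an equality on every stable factor. L\"ubke-type rigidity then upgrades the Hermitian--Einstein metric on each factor to a projectively Hermitian flat one on the smooth locus, and this projectively flat structure extends reflexively across the codimension-two singular set, yielding simultaneously local freeness of $\mathcal{E}$ and the desired filtration by projectively Hermitian flat subbundles. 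The heart of the difficulty lies in this last step: controlling possibly non-locally-free inputs, executing the Bando--Siu analytic construction, and exploiting the equality case of the Bogomolov--Gieseker inequality to pin down projective Hermitian flatness.
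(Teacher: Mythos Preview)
The paper does not supply its own proof of this Theorem-Definition; it is quoted with citations to \cite[Chapter 4, Theorem 4.1]{Nakayama}, \cite[Proposition 13, Corollary 10]{Wu20}, and \cite[Definition 4.1]{LOY20} and used as a black box throughout. Your outline---the reduction to the $\mathbb{Q}$-twist $F=\mathcal{E}\langle -\det\mathcal{E}/r\rangle$ with $c_1(F)=0$, the implications $(3)\Rightarrow(1)\Rightarrow(2)$ via extension-stability of nefness and the DPS inequalities for nef bundles, and the hard direction $(2)\Rightarrow(3)$ via Bando--Siu on the stable Jordan--H\"older pieces together with the equality case of Bogomolov--Gieseker and the extension of the resulting projectively flat structure across codimension two---is exactly the strategy carried out in those references (Nakayama in the projective case, Wu in the K\"ahler case), so there is no discrepancy to report. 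One minor point of precision: a phrase such as ``$G_i\langle -\det\mathcal{E}/r\rangle$ has vanishing Chern curvature'' is informal, since $\mathbb{Q}$-twists do not literally carry Hermitian metrics; the rigorous statement you need (and implicitly use) is that projective Hermitian flatness of $G_i$ together with $c_1(G_i)/\rk G_i = c_1(\mathcal{E})/r$ makes the $\mathbb{Q}$-twisted bundle $G_i\langle -\det\mathcal{E}/r\rangle$ nef, after which extensions of nef $\mathbb{Q}$-twisted bundles remain nef.
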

\noindent
These relations can be summarized by the following table.
\begin{equation*}
\xymatrix@C=20pt@R=20pt{
\txt{projectively \\Hermitian  flat} \ar@{=>}[d]&    \txt{Hermitian  flat} \ar@{=>}[d]\ar@{=>}[r]\ar@{=>}[l]& \txt{Griffith semi-positive}\ar@{=>}[d]&  \txt{generically ample}\ar@{=>}[d]\\
\txt{projectively \\ numerically flat}  \ar@{=>}[d]& \txt{numerically flat} \ar@{=>}[d]\ar@{=>}[r]\ar@{=>}[l]&\txt{nef}\ar@{=>}[r]&\txt{generically nef}  \\ 
\txt{projectively flat}&  \txt{flat}\ar@{=>}[l]& & 
}
\end{equation*}

The notion of generically ample vector bundles with respect to ample line bundles can be characterized by the slope,
which enables us to extend the definition
to any movable classes on compact K\"ahler manifolds.

\begin{lem}\label{lem-ch}
Let $X$ be a smooth projective variety of dimension $n$.
Let $A_1, \ldots, A_{n-1}$ be ample line bundles on $X$.
Then, a torsion-free sheaf $\mathcal{E}$ is $(A_1, \ldots, A_{n-1})$-generically ample
$($resp.\,$(A_1, \ldots, A_{n-1})$-generically nef$)$
if and only if $\mu^{\min}_{A_1\cdots A_{n-1}}(\mathcal{E}) > 0$ $($resp.\,$\mu^{\min}_{A_1\cdots A_{n-1}}(\mathcal{E}) \ge 0$$)$.
\end{lem}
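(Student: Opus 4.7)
The plan is to combine two classical ingredients: the Mehta--Ramanathan restriction theorem (in its extension to torsion-free sheaves) and the curve characterization of ample/nef vector bundles. Mehta--Ramanathan states that for $m_i \gg 0$ and general $D_i \in |m_i A_i|$, the Harder--Narasimhan filtration of $\mathcal{E}$ with respect to the class $A_1 \cdots A_{n-1}$ restricts term-by-term to the Harder--Narasimhan filtration of $\mathcal{E}|_C$ on the complete intersection curve $C := D_1 \cap \cdots \cap D_{n-1}$; in particular,
\[
\mu^{\min}\bigl(\mathcal{E}|_C\bigr) \;=\; m_1 \cdots m_{n-1}\,\mu^{\min}_{A_1 \cdots A_{n-1}}(\mathcal{E}).
\]
The curve characterization is the standard fact that a vector bundle $E$ on a smooth projective curve is ample (resp.\,nef) if and only if $\mu^{\min}(E) > 0$ (resp.\,$\ge 0$), equivalently every quotient bundle of $E$ has strictly positive (resp.\,nonnegative) degree.

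For the direction $(\Rightarrow)$, I would assume $\mathcal{E}$ is $(A_1,\ldots,A_{n-1})$-generically ample (resp.\,nef) and take an arbitrary nonzero torsion-free quotient $\mathcal{E} \to \mathcal{Q} \to 0$. Since $\mathcal{E}$ and $\mathcal{Q}$ fail to be locally free only on loci of codimension at least two, a sufficiently general $C$ avoids both singular loci, so $\mathcal{E}|_C$ and $\mathcal{Q}|_C$ are genuine vector bundles and $\mathcal{E}|_C$ is ample (resp.\,nef). The bundle $\mathcal{Q}|_C$ is then a quotient of an ample (resp.\,nef) bundle on a curve, hence itself ample (resp.\,nef), yielding $\deg \mathcal{Q}|_C > 0$ (resp.\,$\ge 0$). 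The projection formula $\deg \mathcal{Q}|_C = m_1 \cdots m_{n-1}\, c_1(\mathcal{Q}) \cdot A_1 \cdots A_{n-1}$ converts this into $\mu_{A_1 \cdots A_{n-1}}(\mathcal{Q}) > 0$ (resp.\,$\ge 0$), and taking the infimum over $\mathcal{Q}$ completes the argument.

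For the converse $(\Leftarrow)$, the displayed identity from Mehta--Ramanathan immediately gives $\mu^{\min}(\mathcal{E}|_C) > 0$ (resp.\,$\ge 0$) for $m_i \gg 0$ and general $D_i$, and the curve characterization then concludes that $\mathcal{E}|_C$ is ample (resp.\,nef), which is precisely the definition of $\mathcal{E}$ being $(A_1,\ldots,A_{n-1})$-generically ample (resp.\,nef).

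The only delicate ingredient is the torsion-free version of the Mehta--Ramanathan restriction theorem on a higher-dimensional base with a multi-polarization; everything else amounts to bookkeeping with codimension-$2$ loci, the projection formula, and the elementary curve criterion. Accordingly, I do not expect a genuine conceptual obstacle: the main care point is the correct invocation of Mehta--Ramanathan for sheaves (rather than vector bundles), which I would cite from the literature rather than reprove.
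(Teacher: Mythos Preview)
Your proposal is correct and follows essentially the same route as the paper: both directions hinge on Mehta--Ramanathan (restricting the Harder--Narasimhan filtration to a general complete-intersection curve) together with the curve criterion that a bundle on a curve is ample (resp.\ nef) iff $\mu^{\min}>0$ (resp.\ $\ge 0$). The only cosmetic difference is that for $(\Rightarrow)$ the paper works directly with the last graded piece $\mathcal{E}_l/\mathcal{E}_{l-1}$ of the Harder--Narasimhan filtration, whereas you test an arbitrary quotient and then pass to the infimum; since $\mu^{\min}$ is in fact attained at that last graded piece, the two arguments are equivalent---just make sure you invoke this attainment explicitly in the ample case, as an infimum of strictly positive numbers need not a priori be positive.
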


\begin{defn}
Let $\alpha$ be a movable class on a compact K\"ahler manifold $X$.
Then, a torsion-free sheaf $\mathcal{E}$ is said to be $\alpha$-\textit{generically ample}
(resp.\,$\alpha$-\textit{generically nef}) if $\mu^{\min}_{\alpha}(\mathcal{E}) > 0$ (resp.\, $\mu^{\min}_{\alpha}(\mathcal{E}) \ge 0$).
\end{defn}

\begin{proof}[Proof of Lemma \ref{lem-ch}]
Assume that $\mathcal{E}$ is $(A_1, \ldots, A_{n-1})$-generically ample.
Let
$$
0 =: \mathcal{E}_0 \subset \mathcal{E}_1 \subset \cdots \subset \mathcal{E}_l:= \mathcal{E}.
$$
be the Harder-Narasimhan filtration with respect to this polarization.
Then, straightforward computation yields
$\mu^{\min}_{A_1\cdots A_{n-1}}(\mathcal{E}) = \mu_{A_1\cdots A_{n-1}}(\mathcal{E}_{l} /\mathcal{E}_{l-1})$.
By definition, we have
$$
\mu_{A_1\cdots A_{n-1}}(\mathcal{E}_{l} /\mathcal{E}_{l-1}) =
\frac{1}{m_1 \cdots m_{n-1} \cdot \rank (\mathcal{E}_{l} /\mathcal{E}_{l-1})} \deg_{C}( (\mathcal{E}_{l} /\mathcal{E}_{l-1} |_{C}),
$$
where $C := D_{1} \cap \dots \cap D_{n-1}$ is a curve defined by
general members $D_{i}$ in $| m_i A_i|$ and $m_i \gg 0 $.
Then, the right-hand side is positive since $\mathcal{E}|_{C}$ is ample.

The converse implication follows from Mehta-Ramanathan's theorem.
The restriction of the Harder-Narasimhan filtration to $C$ constructed above
also gives a Harder-Narasimhan filtration of $\mathcal{E}|_{C}$.
Hence, we obtain
$$
0<\mu^{\min}_{A_1\cdots A_{n-1}}(\mathcal{E})=\mu_{A_1\cdots A_{n-1}}(\mathcal{E}_{l} /\mathcal{E}_{l-1})
=\mu(\mathcal{E}_{l} /\mathcal{E}_{l-1} |_{C})=\mu^{\min}(\mathcal{E}|_{C}), 
$$
which finishes the proof.
The proof for the nef case is almost the same, and thus we omit it.
\end{proof}

The following lemma is easily obtained, but useful in this paper.

\begin{lem}
\label{nef_generically_ample}
Let $X$ be a compact K\"ahler manifold and $E$ be a vector bundle on $X$.
\begin{enumerate}
\item[$(1)$] If $E$ is pseudoeffective, then $E$ is $\alpha$-generically nef for any movable class $\alpha$.
\item[$(2)$] If $E$ is pseudoeffective and $\mu^{\min}_{\beta}(E) >0$ for some movable class $\beta$, then $E$ is $\alpha$-generically ample for any class $\alpha$ in the interior of the movable cone.
\end{enumerate}
\end{lem}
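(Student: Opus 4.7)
The plan is to base both parts on the standard fact that a pseudoeffective vector bundle has pseudoeffective determinant on every torsion-free quotient: for any torsion-free quotient $E \to Q \to 0$ with $Q \neq 0$, the class $c_1(Q) \in H^{1,1}(X,\mathbb{R})$ is pseudoeffective. Granting this, both assertions reduce to short slope manipulations against movable classes.

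For part (1), I would fix any nonzero torsion-free quotient $Q$ and any movable class $\alpha$. Pairing a pseudoeffective class with a movable class gives a nonnegative number, essentially by the definitions of the two cones in the compact K\"ahler setting. Thus $c_1(Q) \cdot \alpha \geq 0$ and hence $\mu_\alpha(Q) \geq 0$; taking the infimum over $Q$ yields $\mu^{\min}_\alpha(E) \geq 0$, which is precisely $\alpha$-generic nefness in the sense of the definition given just above the lemma.

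For part (2), I would exploit the openness of $\alpha$ inside the movable cone: since $\alpha$ lies in the interior and $\beta$ is movable, there exists $\varepsilon > 0$ such that $\alpha - \varepsilon \beta$ is still a movable class. For any nonzero torsion-free quotient $Q$, I split
\[
c_1(Q) \cdot \alpha \;=\; c_1(Q) \cdot (\alpha - \varepsilon \beta) \;+\; \varepsilon\, c_1(Q) \cdot \beta.
\]
The first summand is nonnegative by part (1) applied to the movable class $\alpha - \varepsilon \beta$, and the second satisfies $\varepsilon\, c_1(Q) \cdot \beta \geq \varepsilon\, \rk(Q)\, \mu^{\min}_\beta(E) > 0$. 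Dividing by $\rk(Q)$ yields $\mu_\alpha(Q) \geq \varepsilon\, \mu^{\min}_\beta(E)$ uniformly in $Q$, so $\mu^{\min}_\alpha(E) \geq \varepsilon\, \mu^{\min}_\beta(E) > 0$ and $E$ is $\alpha$-generically ample.

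The one ingredient that needs justification is the pseudoeffectivity of $c_1(Q)$ for every torsion-free quotient $Q$ of a pseudoeffective $E$, which I expect to be the main (mild) obstacle. I would handle this either by invoking a standard reference on pseudoeffective vector bundles, or by a short argument via the projective subbundle inclusion $\mathbb{P}(Q) \hookrightarrow \mathbb{P}(E)$ induced by $E \to Q$, using the identification $\mathcal{O}_{\mathbb{P}(E)}(1)|_{\mathbb{P}(Q)} = \mathcal{O}_{\mathbb{P}(Q)}(1)$ to transfer pseudoeffectivity from $\mathcal{O}_{\mathbb{P}(E)}(1)$ to $\mathcal{O}_{\mathbb{P}(Q)}(1)$, and hence to $Q$ and to $\det Q$.
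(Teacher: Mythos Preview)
Your proposal is correct. Part~(1) is essentially identical to the paper's argument: both simply note that $c_1(Q)$ is pseudoeffective for every torsion-free quotient $Q$ and pair against the movable class $\alpha$.

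For part~(2) the approaches genuinely differ. The paper argues by contradiction: assuming $\mu^{\min}_{\alpha}(E)\le 0$ for some interior movable $\alpha$, it produces a quotient $Q$ with $\mu_{\alpha}(Q)\le 0$, and then invokes the fact (citing \cite[Lemma~2.1]{Mat13}) that a pseudoeffective class pairing to zero against an \emph{interior} movable class must vanish, forcing $c_1(Q)=0$ and hence $\mu_{\beta}(Q)=0$, contradicting $\mu^{\min}_{\beta}(E)>0$. Your route is a direct perturbation: since $\alpha$ lies in the interior, $\alpha-\varepsilon\beta$ stays movable for small $\varepsilon>0$, and splitting $c_1(Q)\cdot\alpha$ accordingly yields the uniform bound $\mu_{\alpha}(Q)\ge \varepsilon\,\mu^{\min}_{\beta}(E)>0$. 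Your argument is slightly more self-contained---it avoids both the external lemma and the implicit use that the infimum $\mu^{\min}_{\alpha}$ is actually attained by some quotient---and it yields an explicit quantitative lower bound. The paper's argument, on the other hand, isolates the structural consequence $c_1(Q)=0$, which is of independent interest. Both proofs rest equally on the claim that $c_1(Q)$ is pseudoeffective for every torsion-free quotient of a pseudoeffective bundle, which the paper also asserts without further justification.
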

\begin{proof}
(1). For any torsion-free quotient $E \rightarrow Q$, the Chern class $c_1(Q)$ is pseudoeffective,
and thus $\mu^{\min}_{\alpha}(E) \geq 0$ holds for any movable class $\alpha$.
\\
(2). Assume that there exists a class $\alpha$ in the interior of the movable cone
such that $\mu^{\min}_{\alpha}(E) \le 0$.
By definition, we can find a torsion-free quotient $E \rightarrow Q$ such that
$\mu^{}_{\alpha}(Q) \le 0$.
Then, we obtain $c_1(Q)=0$ since $\det Q$ is pseudoeffective (for example, see \cite[Lemma 2.1]{Mat13}).
On the other hand, by assumption,
we obtain $\mu^{\min}_{\beta}(E) \le  \mu_{\beta}(Q) =0$,
which is a contradiction.
\end{proof}

\begin{rem}
\text{}
In this paper, we say $E$ is {\textit{pseudoeffective}}
if the nonnef locus of $\mathcal{O}_{\mathbb{P}(E)}(1)$ is not dominant over $X$.
This definition requires a stronger condition than the pseudoeffectivity of $\mathcal{O}_{\mathbb{P}(E)}(1)$.
The converse of (1) in Lemma \ref{nef_generically_ample} does not hold.
Indeed, by \cite{CP11},
the cotangent bundle $\Omega_{X}$ is generically nef
when $K_{X}$ is nef.
Therefore, the cotangent bundle of a (strict) Calabi-Yau manifold is generically nef, 
but it is not pseudoeffective by \cite[Corollary 6.12]{DPS01}.
\end{rem}

\section{Abundance Theorem and Structure Theorem}
\label{Abundance_smooth}
\subsection{Abundance theorem}

This subsection is devoted to the proofs of Theorems \ref{thm-main} and \ref{thm-main2}.
We first prove Theorem \ref{thm-main}, assuming Theorem \ref{generically_nef_reflexive}.
Theorem \ref{generically_nef_reflexive} is proved in Section \ref{Sec5}.

\begin{proof}[Proof of Theorem \ref{thm-main}]
Assume Condition (2).
Then, from \cite[Corollary 2.6]{DPS94}, we obtain
$$
0 \leq c_{2}(X) \omega^{n-2} \leq c_{1}(X)^{2} \omega^{n-2} =0
$$
for any K\"ahler form $\omega$.
This implies that $c_{2}(X) =0$ by Theorem \ref{generically_nef_reflexive} (1). 

Conversely, we assume Condition (1).
Since $K_{X} $ is nef,
the cotangent bundle $\Omega_{X}$ is $\{\omega\}^{n-1}$-generically nef for any K\"ahler from $\omega$
by \cite[Theorem 1.4]{Enoki93} and \cite[Theorem 1.2]{Cao13}.
Then, by Theorem \ref{generically_nef_reflexive} (1) and $c_{2}(X)=0$,
we can see that $\Omega_{X}$ is nef.
It remains to show that $\nu(K_X) \leq 1$.
If $\nu(K_X) > 1$, 
there exists a nef line bundle $L \subset \Omega_{X}$ such that $c_1(L)=c_1(K_X)$.
This follows from \cite{Cao13} in the K\"ahler case and \cite{Ou17} in the projective case
(see Theorems \ref{Cao13_revisit} and \ref{Ou_chapter6}).
Then, the Bogomolov-Sommese vanishing theorem implies 
$H^{0}(X, \Omega_{X} \otimes L^{*})=0$.
This contradicts the fact that $L$ is a subbundle of $\Omega_{X}$.
Hence, we conclude that $\nu(K_X) =0$ or $1$.
\end{proof}

For the proof of Theorem \ref{thm-main2},
we focus on the notion of special manifolds introduced by Campana in \cite{Cam04}.
A compact K\"ahler manifold $X$ is said to be \textit{special}
if any $p \in \mathbb{Z}_{+}$ and any saturated subsheaf $F \subset  \Omega_{X}^{p}$ of rank $1$ satisfies $\kappa(F) < p$,
where $\kappa(F)$ denotes the Kodaira dimension of $F$.
In general, it is quite difficult to determine whether a given manifold is special or nonspecial. 
Nevertheless, the following theorem gives a criteria of manifolds to be nonspecial.

\begin{thm}
\label{not_special}
Let $X$ be a compact K\"ahler manifold.
If there exists a projectively flat subsheaf $E \subset \Omega_{X}$ with $\nu(c_1(E))=1$,
then $X$ is not special.
\end{thm}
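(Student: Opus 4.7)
The plan is to produce a Bogomolov sheaf on $X$, namely a rank-one saturated subsheaf $F \subset \Omega_X^p$ with $\kappa(F) = p$ for some $p \ge 1$, since by definition the existence of such a sheaf is the negation of speciality.

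I would first exploit the projective flatness to normalise $E$. Let $r = \rk E$. Projective flatness forces the Bogomolov-type equality
$$
\Delta(E) \;:=\; (r-1)c_1(E)^2 \;-\; 2r\, c_2(E) \;=\; 0,
$$
and, after passing to a suitable finite \'etale cover $X' \to X$ (which is harmless for speciality, since non-speciality descends through \'etale covers), one may write $E \cong L \otimes V$ with $L$ a line bundle and $V$ a Hermitian-flat vector bundle, using Donaldson--Uhlenbeck--Yau on the numerically flat twist $E \otimes L^{-1}$. A further cyclic cover trivialises $\det V$, after which $\det E \cong L^{\otimes r}$ and the hypothesis $\nu(c_1(E)) = 1$ reads $\nu(L) = 1$. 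The natural candidate sheaf is then the Pl\"ucker image
$$
L^{\otimes r} \;\cong\; \det E \;\hookrightarrow\; \Lambda^r \Omega_{X'} \;=\; \Omega_{X'}^r,
$$
whose saturation $F$ is a rank-one saturated subsheaf of $\Omega_{X'}^r$; Bogomolov--Sommese gives $\kappa(F) \le r$, and the task is to prove $\kappa(F) \ge r$.

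The main obstacle is precisely this reverse inequality, since the naive bound $\kappa(\det E) \le \nu(\det E) = 1$ falls short of $r$ as soon as $r \ge 2$; hence the saturation $F$ must genuinely enlarge $\det E$ by an effective divisor that contributes additional Kodaira dimension. To overcome this, I would use the full rank-$r$ inclusion $E \hookrightarrow \Omega_{X'}$. Trivialising $V$ on a further finite \'etale cover $X'' \to X'$ produces $r$ pointwise linearly independent sections of $\Omega_{X''} \otimes L^{-1}$, i.e., $r$ distinct injections $L \hookrightarrow \Omega_{X''}$, hence $r$ rank-one subsheaves $L_1, \dots, L_r \subset \Omega_{X''}$ all isomorphic to $L$ and spanning a rank-$r$ subbundle. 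Applied to each $L_i$, an abundance-type argument, combining the pseudo-effectivity inherited from the inclusion in $\Omega_X$ with the projective flatness of $E$, should yield an Iitaka fibration $f \colon X'' \to C$ onto a curve with $L = f^\ast M$ for some positive line bundle $M$ on $C$; the compatibility of the inclusion $L \hookrightarrow \Omega_{X''}$ with the relative cotangent sequence of $f$ then forces $K_C$ to be big, i.e., $g(C) \ge 2$.

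Finally, Campana's characterisation tells us that a compact K\"ahler manifold admitting a fibration onto a positive-dimensional base of general type is non-special, so $X''$, and hence $X$, is non-special. The technical heart of the argument, and the step I expect to be most delicate, is the abundance-type statement $\kappa(L) = \nu(L) = 1$ for the rank-one subsheaf $L \subset \Omega_{X'}$; this is where the projective flatness of $E$ enters in an essential way, as the flat structure on $V$ supplies the additional cotangent directions needed to rigidify $L$ and pass from numerical to effective information.
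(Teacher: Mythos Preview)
Your argument has a genuine gap at the trivialisation step. Even granting the decomposition $E \cong L \otimes V$ with $V$ Hermitian flat (which itself is not automatic: DUY gives a Hermitian-flat structure only for \emph{polystable} numerically flat bundles, and a projectively flat $E$ need not yield a polystable twist), a Hermitian-flat bundle $V$ corresponds to a unitary representation $\sigma \colon \pi_1(X') \to U(r)$, and $V$ becomes trivial on a finite \'etale cover precisely when $\mathrm{Im}(\sigma)$ is \emph{finite}. There is no reason this should hold: the image may well be infinite, in which case no finite cover trivialises $V$ and you never obtain the $r$ independent inclusions $L \hookrightarrow \Omega_{X''}$. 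Your construction of a fibration onto a curve of genus $\ge 2$ therefore covers only the finite-monodromy case, and even there the ``abundance-type statement'' $\kappa(L)=\nu(L)=1$ that you flag as delicate is left entirely open.

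The paper's proof confronts exactly this dichotomy. Writing the projective-flat structure as $\rho \colon \pi_1(X) \to \mathbb{P}GL(r,\mathbb{C})$, it splits on whether $\mathrm{Im}(\rho)$ is finite or infinite. In the finite case one obtains, after a cover, $E \cong L^{\oplus r}$ with $\nu(L)=1$, and non-speciality follows from \cite[Theorem~A]{PRT21}; this is the missing abundance input you would need. In the infinite case the argument is entirely different: one passes to the semisimple quotient $H/\Rad(H)$ of the Zariski closure $H$ of $\mathrm{Im}(\rho)$ and takes the associated Shafarevich map. Either this map is trivial, in which case $\mathrm{Im}(\rho)$ is virtually solvable and a filtration argument via \cite[Proposition~3.3]{GKP20b} still extracts a line bundle $A \subset \Omega_X$ with $\nu(A)=1$ (and \cite{PRT21} applies again); or the Shafarevich map has positive-dimensional image, which is of general type by \cite{CCE15}, so $X$ is non-special directly. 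Your proposal does not anticipate this representation-theoretic and Shafarevich-map machinery, and without it the infinite-monodromy case remains unaddressed.
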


\begin{proof}
Any finite \'etale cover of special manifolds is special by \cite[Theorem 5.12]{Cam04}, 
and thus we can freely replace $X$ with its finite \'etale covers. 
By the definition of the projective flatness, there exists a representation
$$
\rho: \pi_1 (X)  \rightarrow \mathbb{P}GL(r, \C)
$$
such that $\mathbb{P}(E) \cong X_{\univ} \times_{\rho} \mathbb{P}^{r-1}$, where $r:=\rk E$.

In the case where $\Image{\rho}$ is finite, the Kernel $\Ker{\rho} \subset \pi_{1}(X)$ is of finite index. 
Hence, after replacing $X$ with the finite \'etale cover corresponding to $\Ker{\rho}$,
we may assume that $\Image{\rho}$ is the one point. 
This implies that $E \cong L^{\oplus r}$ holds for some line bundle $L$.
We have $\nu(L)=\nu(c_{1}(E))=1$ by assumption, and hence $X$ is not special by \cite[Theorem A]{PRT21}.

From now on, we assume that $\Image{\rho}$ is infinite.
Let $H$ be the algebraic Zariski closure of $\Image{\rho}$.
Note $H$ has finitely many connected components since $H$ is a linear algebraic group.
After replacing $X$ with its finite \'etale cover, we may assume that $H$ is connected.
For the solvable radical $\Rad(H)$ of $H$,
we consider
$$
\rho': \pi_1(X) \xrightarrow{\rho} H \twoheadrightarrow H/\Rad(H).
$$
and the kernel $K:={\rm Ker}(\rho')$.
By Selberg's Lemma, we may assume that the image of $\rho'$ is torsion-free.
We now take a $K$-Shafarevich map $\sha^{K} : X \dashrightarrow \Sha^{K}(X)$
(see \cite[Theorem 3.6]{Kol95} or \cite[Chapter 3]{Cam94} for Shafarevich maps.)

In the case of $\dim \Sha^{K}(X)=0$, after replacing $X$ with its finite \'etale cover,
we may assume that the image of $\rho$ is contained in $ \Rad(H)$.
Then, by the argument of \cite[Proposition 3.3]{GKP20b},
we can conclude that
$$
E\cong A \otimes G,
$$
where $A$ is a line bundle and $G$ is a flat vector bundle admitting a filtration
$$
0=G_0 \subset G_1 \subset \cdots \subset G_r = G
$$
such that $G_i$ is a flat vector bundle and $G_1 \cong \mathcal{O}_X$.
From $G_1 \cong \mathcal{O}_X$, we obtain
$$A = G_1 \otimes A \subset G \otimes A =E \subset \Omega_{X}.$$
Hence, from $\nu(A)=\nu(c_{1}(E))=1$,
we can conclude that $X$ is not special by using \cite[Theorem A]{PRT21} again.

When $\dim \Sha^{K}(X)>0$,
the base variety $\Sha^{K}(X)$ of the $K$-Shafarevich map is of general type since $H / \Rad(H)$ is semi-simple and the image of $\rho'$ is torsion-free,
which follows from \cite[Th\'eor\`eme 1]{CCE15}.
Then, the manifold $X$ is not special by the definition of special manifolds.
\end{proof}

The theorem below ensures that nonspecial manifolds admit
a fibration of general type in the sense of Campana
(see \cite{Cam04, Cam11} for the details).

\begin{thm}[{\cite[Theorems 2.25, 2.26 and Corollaries 1.13, 4.7]{Cam04}}]
\label{campana_special}
Let $X$ be a compact K\"ahler manifold.
Assume $X$ is not special.
Then, there exists an almost holomorphic map $f: X \dashrightarrow Y$,
called {\textit{the core fibration}},
onto a compact K\"ahler manifold $Y$ with $0 < \dim Y \le \dim X$
satisfying the following diagram and properties$:$
\begin{equation*}
\xymatrix@C=30pt@R=20pt{
X'\ar@{->}[d]_{\pi_{X}} \ar@{->}[r]^{f'}&Y' \ar@{->}[d]^{\pi_{Y}}\\ 
X\ar@{-->}[r]^{f} &  Y, \\
 }
\end{equation*}

\begin{itemize}
\item[(1)]
Both $\pi_{X} : X' \rightarrow X$ and $\pi_{Y} : Y' \rightarrow Y$ are bimeromorphic maps between compact K\"ahler manifolds.

\item[(2)] A general fiber $F'$ of $f': X' \to Y'$ is special.

\item[(3)] $\kappa(K_{Y'} + \Delta(f')) = \dim Y'$ holds, where $\Delta(f')$ is the multiplicity divisor of $f'$.

\item[(4)]
$
\kappa(K_{X'}) =\kappa(K_{F'}) +\dim Y'
$ holds
for a general fiber $F'$ of $f': X' \to Y'$.
\end{itemize}
\end{thm}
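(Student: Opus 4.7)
The plan is to produce the core fibration as the maximal quotient of $X$ in the class of almost holomorphic fibrations whose general fiber is special. First I would order such fibrations by factorization ($f \le g$ when $g$ factors rationally through $f$) and verify that the class of \emph{special fibrations} (those with special general fiber) is stable under joins in this partially ordered set: this stability is the technical heart of Campana's setup, and amounts to showing that specialty is preserved in towers, i.e., if $F$ fits in a fibration with special base and special fiber, then $F$ is special. Using the Barlet cycle space to parameterize subvarieties of $X$ and a quotient construction identifying points that lie on a common special subvariety, one obtains the core $f \colon X \dashrightarrow Y$; after resolving the graph of $f$ by $\pi_X \colon X' \to X$ and smoothing the base by $\pi_Y \colon Y' \to Y$, we recover the diagram and properties $(1)$ and $(2)$.

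The main obstacle is property $(3)$, that the orbifold base $(Y', \Delta(f'))$ is of general type. I would argue by contrapositive: if $\kappa(K_{Y'} + \Delta(f')) < \dim Y'$, the orbifold Iitaka fibration of $(Y', \Delta(f'))$ would yield a further quotient $Y' \dashrightarrow Z$ with $\dim Z < \dim Y'$ whose induced orbifold pair $(Z, \Delta_Z)$ is of general type. Applying Campana's orbifold Bogomolov-type theorem for foliations, a Bogomolov subsheaf on $(Z, \Delta_Z)$ pulls back to a saturated rank-one subsheaf of $\Omega_{X'}^{\dim Z}$ of Kodaira dimension $\dim Z$; restricting to a general fiber of the composition $X' \dashrightarrow Z$ would then exhibit a rank-one saturated subsheaf of the fiber's $p$-th exterior cotangent sheaf with maximal Kodaira dimension, contradicting the specialty of those fibers that is forced by the maximality defining the core. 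Carrying this out rigorously rests on the orbifold version of Viehweg's weak positivity of direct images of pluricanonical sheaves and the orbifold Bogomolov inequality for foliations.

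Property $(4)$ then follows from an Iitaka-type additivity over an orbifold base of general type. Given $(3)$, I would invoke Kawamata's $C_{n,m}$ over bases of general type (in the projective setting), together with its orbifold K\"ahler extension by Campana, to conclude
$$
\kappa(K_{X'}) \;=\; \kappa(K_{F'}) + \kappa(K_{Y'} + \Delta(f')) \;=\; \kappa(K_{F'}) + \dim Y'.
$$
The hardest ingredient throughout is $(3)$: it is what upgrades the purely fibration-theoretic construction of the core into a meaningful invariant, and is exactly where orbifold positivity theory enters in a nontrivial way.
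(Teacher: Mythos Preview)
The paper does not prove this theorem at all: it is stated as a citation of Campana's results \cite[Theorems 2.25, 2.26 and Corollaries 1.13, 4.7]{Cam04} and used as a black box in the proof of Theorem~\ref{thm-main2}. There is therefore no ``paper's own proof'' to compare your proposal against.

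That said, your sketch is a reasonable outline of Campana's original argument in \cite{Cam04}: constructing the core as the maximal special fibration via stability of specialty in towers, establishing property~(3) by showing that failure of orbifold general type for the base would produce a Bogomolov sheaf on a larger fiber (contradicting maximality), and deducing property~(4) from orbifold $C_{n,m}$-additivity over a base of general type. The weight you place on~(3) as the crux is correct. If you intend to actually \emph{prove} the statement rather than cite it, be aware that the ``stability of specialty in towers'' step and the orbifold weak positivity input each require substantial work that your sketch only names; but for the purposes of this paper, a citation is all that is expected.
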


Under the above preparation, we prove Theorem \ref{thm-main2}.

\begin{proof}[Proof of Theorem \ref{thm-main2}]
When $\nu(K_X) =0$,
there exists a finite \'etale cover $T \rightarrow X$ from a torus $T$.
This implies that $K_{X}$ is semi-ample.

In the case of  $\nu(K_X) =1$,
we prove the semi-ampleness of $K_{X}$ by induction on $n:= \dim X$.
In this case, we have $c_{2}(X)=0$ by Theorem \ref{thm-main}.
Hence, by applying Theorem \ref{Cao13_revisit} (see Theorem \ref{Ou_chapter6} in the projective case),
we can find a projectively numerically flat subbundle
$E \subset \Omega_{X}$ with $\nu(c_1(E)) =1$.
The subbundle $E$ is projectively flat by \cite[Theorem 4.6]{LOY20},
and thus $X$ is not special by Theorem \ref{not_special}.
Then, by Theorem \ref{campana_special}, there exists a nontrivial core fibration $f: X \dashrightarrow Y$.
Note that $ \kappa(K_{X'}) = \kappa(K_{X}) $ and $\kappa(K_{F'}) =\kappa(K_{F})$
since $X$ and $F$ are smooth,
where $F$ is a general fiber of $f: X \dashrightarrow Y$.
This implies that
$$
\kappa(K_{X}) =\kappa(K_{F}) +\dim Y.
$$
It is sufficient to show that $\kappa(K_{F})\not = -\infty$.

The cotangent bundle $\Omega_{F}$ is nef,
and $\nu(K_{F}) = \nu(K_{X}|_{F}) \leq \nu(K_{X}) = 1$ holds.
Hence, by the induction hypothesis, the canonical bundle $K_{F}$ is semi-ample. 
In particular, we have $\kappa(K_{F})\geq 0$.
This implies that
$$
1= \nu(K_X) \ge  \kappa(K_{X}) =\kappa(K_{F}) +\dim Y  \ge \dim Y \ge 1.
$$
Hence, the canonical divisor $K_X$ is semi-ample.
\end{proof}


\subsection{Structure theorem}

In this subsection, with specific examples,
we examine properties of the Iitaka fibration $f:X\to Y$ of
$X$ with nef cotangent bundle.
When $X$ is a smooth projective variety,
H\"oring in \cite{Hor13} showed that, up to finite \'etale covers,
the Iitaka fibration $f:X\to Y$ (if it exists) is an abelian group scheme
onto a smooth projective variety $Y$.
In particular, the Iitaka fibration $f:X\to Y$ always admits a section $s: Y \to X$.
Furthermore, the base variety $Y$ has the nef cotangent bundle and 
the ample canonical bundle $K_{Y}$.
However, by Example \ref{ex-ara}, we see  that
the Iitaka fibration does not necessarily have a section when we assume only that $X$ is K\"ahler.
This comes from the lack of fine moduli of complex tori. 
Further, by Example \ref{ex-cam}, we see that 
the Iitaka fibration $f:X \to Y$ is not smooth and $K_{Y}$ is not  ample, 
without taking finite \'etale covers even if $X$ is projective.

\begin{ex}\label{ex-ara}
This example was given by Donu Arapura in Mathoverflow \cite{Ara}.
Let $C$ be a smooth curve of the genus $g \geq 2$ curve and $E$ be an elliptic curve.
Fixing a nontorsion point $p \in E$ and the isomorphism $H_{1}(C, \Z) \cong \Z^{2g}$,
we define a homomorphism
$$
\begin{array}{rccc}
&  H_{1}(C, \Z) \cong \Z^{2g}                   &\rightarrow& E                 \\
        & (x_1, \ldots, x_{2g})                  & \longmapsto   & (\sum_{i=1}^{2g} x_i )p. 
\end{array}
$$
By composing the natural morphism $\pi_{1}(C) \rightarrow  H_{1}(C, \Z)$ and $\tilde{h}$,
we obtain a homomorphism $h : \pi_{1}(C) \rightarrow E$.
Note that the image of $h$ is dense in $E$ since $p \in E$ is a nontorsion point.

Let $C_{\univ}$ be a universal cover of $C$.
We define an action of $\pi_{1}(C)$ on $ C_{\univ} \times E$
to be $\gamma(x,y) = (\gamma^{-1}x, y + h(\gamma))$ for $\gamma \in \pi_{1}(C)$.
Let $X$ be the quotient of $C_{\univ} \times E$ by this action.
Then the induced morphism $f : X \rightarrow C$ by projection gives the Iitaka fibration of $X$.
The Iitaka fibration $f : X \rightarrow C$ does not admit a section even if we replace $X$ with finite \'etale covers.
Indeed, if the Iitaka fibration $X' \to C'$ of a finite \'etale cover $X' \to X$ admits a section,
the image of the section gives a relatively ample divisor on $X$. 
Therefore, we can conclude that $X'$ (and thus $X$) is projective.
However, the elliptic curve $E$ does not have
a nontrivial
line bundle that is invariant by the action of $h : \pi_{1}(C) \rightarrow E$.
This implies that $X$ is not projective.

\end{ex}

\begin{ex}[{\cite[Example 2.12]{Cam11}}]\label{ex-cam}
This example shows that the Iitaka fibration $f:X \to Y$ is not smooth
and that the base variety $Y$ of is not of general type
without taking finite \'etale covers of $X$.

Let $E$ be an elliptic curve with a translation $\tau$ of order $2$
and $C$ be a hyperelliptic curve of genus $g \ge 2$
with an involution $i : C \rightarrow C$.
Then, the group $G :=\Z/2\Z$ acts on $C \times E$ by $i$ and $\tau$.
Let us consider the quotient map $p : C \times E \rightarrow  (C \times E)/G$.
Then, since $p$ is a finite \'etale cover,
the cotangent bundle $\Omega_{X}$ is nef and $\kappa(K_X)=\nu(K_X)=1$ holds.
The morphism $f: X \rightarrow  C/ \langle  i \rangle \cong \mathbb{P}^1$ induced by the first projection
is the Iitaka fibration of $X$.
Note that the base variety $\mathbb{P}^1$ is, of course, not of general type,
but $f: X \rightarrow  C/ \langle  i \rangle \cong \mathbb{P}^1$
is the core fibration of $X$ and a morphism of general type.

\end{ex}

We prove a structure theorem in the case of $\nu(K_{X})=1$.

\begin{thm}
\label{Structure_compact_kahler}
Let $X$ be a compact K\"ahler manifold
such that $\Omega_{X}$ is nef and $\nu(K_{X})=1$.
Then, there exists a finite \'etale cover $X' \to X$ such that
the Iitaka fibration $f' : X '\rightarrow Y'$ is a smooth fibration
onto a smooth curve $Y'$ of genus $\ge 2$ whose fibers are complex tori.
\end{thm}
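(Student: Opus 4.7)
My plan is to reduce the problem to the Iitaka fibration of $X$ and then (i) identify the fiber type, (ii) compute the genus of the base, and (iii) establish smoothness.

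First, by Theorem \ref{thm-main2}, $K_X$ is semi-ample, so the Iitaka fibration is a genuine morphism $f : X \to Y$ onto a normal variety of dimension $\nu(K_X) = 1$; normality of a curve gives smoothness of $Y$. Since $K_X$ is (numerically) a pullback $f^{*}A$ of an ample class $A$ on $Y$, a general fiber $F$ satisfies $K_F = K_X|_F \equiv 0$. Via the conormal sequence on the smooth locus of $f$, the sheaf $\Omega_F$ is a quotient of the nef bundle $\Omega_X|_F$, hence nef, and together with $c_1(\Omega_F) = 0$ this makes $\Omega_F$ numerically flat. Combining Yau's theorem on Ricci-flat K\"ahler metrics with \cite{DPS94} produces a flat metric on $T_F$, so by the classical Beauville--Bogomolov-type decomposition $F$ admits a finite \'etale cover by a complex torus.

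Next, I would show that, after a suitable \'etale cover, the base has genus $\geq 2$. From the proof of Theorem \ref{thm-main2} we have a projectively numerically flat subsheaf $E \subset \Omega_X$ with $\nu(c_1(E))=1$, which is projectively flat by \cite{LOY20}, so Theorem \ref{not_special} implies that $X$ is not special. Theorem \ref{campana_special} then supplies a core fibration $X \dashrightarrow Y_{c}$ with base of orbifold general type. The dimension formula $\kappa(K_X) = \kappa(K_{F_c}) + \dim Y_c$, together with $\kappa(K_X) = 1$ and $\kappa(K_{F_c}) \leq 0$ (as $F_c$ is special), forces $\dim Y_c = 1$; comparing general fibers one checks that the core and Iitaka fibrations coincide bimeromorphically. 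The orbifold general-type condition on $(Y_c, \Delta(f))$ with $\Delta$ supported at multi-fiber points, combined with a finite \'etale base change of $X$ that trivializes all multiplicities (obtained by choosing a cyclic cover of $Y$ ramified at each multi-fiber point and pulling back $f$), yields a smooth curve $Y'$ of genus $\geq 2$ and a finite \'etale cover $X' \to X$ inducing an Iitaka fibration $f' : X' \to Y'$.

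The main obstacle, as I see it, is upgrading the \'etale cover $X'\to X$ so that the fibration $f': X' \to Y'$ is genuinely smooth, not merely generically so. The strategy is to analyze the saturated inclusion $(f')^{*}\Omega_{Y'} \hookrightarrow \Omega_{X'}$ and its cokernel $\mathcal{Q}$: smoothness of $f'$ is equivalent to $\mathcal{Q}$ being locally free. Once the multi-fibers have been eliminated by the \'etale cover of the previous paragraph, $K_{X'} \equiv (f')^{*}K_{Y'}$ numerically, so $c_{1}(\mathcal{Q}) = 0$. Since $\Omega_{X'}$ is nef and $c_{2}(X') = 0$ by Theorem \ref{thm-main}, one expects $\mathcal{Q}$ to be numerically flat, hence locally free, by a Chern-class argument in the spirit of Theorem \ref{generically_nef_reflexive}: compare $\mathcal{Q}$ with its reflexive hull, control the difference via the Bogomolov-type inequality forced by $c_{2}=0$, and use the Fujita decomposition to rule out any generically-ample contribution to the torsion quotient. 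The delicate point is ensuring that torsion of $\mathcal{Q}$ supported on singular fibers cannot arise purely from positivity constraints; I would handle this by transferring the torus structure on general fibers into global monodromy data on $Y'$ via the Hermitian-flat part of the Fujita decomposition (Theorem \ref{thm-main3} and Theorem \ref{thm-flat}) and arguing that any isolated degeneration would create a curve in $X'$ on which $\Omega_{X'}$ fails to be nef, a contradiction.
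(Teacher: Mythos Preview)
Your overall architecture (Iitaka fibration $\to$ fiber type $\to$ genus $\to$ smoothness) is reasonable, and the first two steps are essentially correct. The real gap is in the smoothness step, and it cascades backwards into the genus step because the paper actually establishes smoothness \emph{first} and then reads off the genus.

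Concretely, your proposed route to smoothness does not work. First, the \'etale cover you build by ``cyclic cover of $Y$ ramified at each multi-fiber point and pulling back $f$'' only kills multiplicities; it does nothing about singular fibers that are reduced but reducible or non-normal, and those are exactly what you still have to exclude. Second, the Chern-class argument you sketch for local freeness of the cokernel $\mathcal{Q}$ is not rigorous: having $c_1(\mathcal{Q})=0$ and $c_2(X')=0$ does not by itself force $\mathcal{Q}$ to be locally free, and the Fujita-decomposition and ``nef-on-a-curve'' contradiction you allude to at the end are not substantiated (a nef bundle can restrict to a nef bundle on any curve, including one sitting inside a singular fiber). Third, even your fiber-type step only shows that a \emph{general} fiber is covered by a torus; you never explain how a single finite \'etale cover of $X$ simultaneously turns \emph{all} fibers into tori.

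The paper bypasses all of this with one citation. Since $\Omega_X$ is nef and $K_X|_F\equiv 0$, the restriction $T_X|_F$ is numerically flat for every fiber $F$; i.e., $T_X$ is $f$-numerically flat. Then \cite[Lemma 2.1(2)]{Hor13} applies directly (with $\dim Y=1$ giving equidimensionality) and produces a finite \'etale cover $X'\to X$ together with the Stein factorization $f':X'\to Y'$ that is smooth with complex-torus fibers. Only \emph{after} this does the paper compute the genus: since $f'$ is smooth, the multiplicity divisor $\Delta(f')$ vanishes, and the core-fibration argument (which you correctly outlined) then gives $K_{Y'}$ big, hence $g(Y')\ge 2$. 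So the fix is to invoke H\"oring's lemma for smoothness and torus fibers in one stroke, and to postpone the genus computation until smoothness is in hand.
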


\begin{proof}
The Iitaka fibration $f: X \rightarrow Y$ is equidimensional from $\dim Y =1$,
and the tangent bundle $T_{X}$ is $f$-numerically flat.
Hence, by \cite[Lemma 2.1 (2)]{Hor13},
there exists a finite \'etale cover $\pi : X' \rightarrow X$
such that $f'$ is a smooth morphism and all fibers of $f'$ are complex tori,
where $f'$ and $\pi'$ are the morphisms that give the Stein factorization of $f \circ \pi$.
\begin{equation*}
\xymatrix@C=40pt@R=30pt{
X'\ar@{->}[d]_{\pi} \ar@{->}[r]^{f'}&Y' \ar@{->}[d]^{\pi'}\\ 
X\ar@{->}[r]^{f} &  Y \\
 }
 \end{equation*}

We show that $K_{Y'}$ is big.
Let $X' \dashrightarrow Z$ be the core fibration of $X$.
From the proof of Theorem \ref{thm-main2}, 
the core fibration of $X'$ coincides with   the Iitaka fibration of $X'$  
and the base variety $Z$ is a curve.
Furthermore, since the fibers of $f'$ are complex tori (and thus special),
the morphism $f': X' \to Y'$ gives the core fibration of $X'$ (and thus the Iitaka fibration).  
By the definition of the core fibration, the divisor $K_{Y'} + \Delta(f')$ is big. 
In our case, since $f'$ is smooth, we have $\Delta(f')=0$,
which implies that $K_{Y'}$ is big.
\end{proof}

\section{On Variation of the Fibers of Iitaka Fibrations}\label{sec-fujita}

\subsection {On the Fujita decomposition of nef cotangent bundles}

In this subsection, using the Fujita decomposition,
we study the abundance conjecture
for smooth projective varieties with nef cotangent bundles.
To motivate this study,
we first observe the Fujita decomposition of the cotangent bundle in the case of \cite{WZ02, Liu14}.

\begin{obs}{(Fujita decomposition in the case of nonpositive holomorphic bisectional curvature). }\label{obs1}
Let $X$ be a smooth projective variety admitting
a K\"ahler metric $g$ with nonpositive holomorphic bisectional curvature.
Wu-Zheng and Liu in \cite{WZ02, Liu14} showed that
a holomorphic Ricci kernel foliation $\mathcal{R}$ induced by the metric $g$ 
determines a locally trivial fibration $f: X \to Y$,
which actually coincides with the Iitaka fibration.
Then, we can show that the standard exact sequence of tangent bundles
$$
0 \to f^{*}\Omega_{Y} \to \Omega_{X} \to \Omega_{X/Y}=\mathcal{R} \to 0
$$
corresponds to the Fujita decomposition of $\Omega_{X}$.
\end{obs}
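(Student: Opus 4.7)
My plan is to verify that the displayed short exact sequence satisfies the defining properties of the Fujita decomposition of $\Omega_{X}$; by the uniqueness of the decomposition, the two must then coincide. Concretely, it suffices to check (i) that the quotient $\Omega_{X/Y}=\mathcal{R}$ is numerically flat, and (ii) that the subsheaf $f^{*}\Omega_{Y}$ is generically ample.

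For (i), I would exploit the structure of $f:X\to Y$ provided by Wu--Zheng and Liu: $f$ is locally trivial with fibers that are flat K\"ahler manifolds (complex tori, up to a finite \'etale cover of $X$). The Ricci kernel foliation $\mathcal{R}$ inherits from $g$ a Hermitian metric whose Chern curvature vanishes identically---this is precisely what it means for $\mathcal{R}$ to be the Ricci kernel in the presence of nonpositive holomorphic bisectional curvature. Dualizing, the induced metric on $\Omega_{X/Y}$ is Hermitian flat, so $c_{1}(\Omega_{X/Y})=0$ and $\Omega_{X/Y}$ is numerically flat.

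For (ii), by local triviality the K\"ahler metric $g$ descends to a K\"ahler metric on $Y$ whose Ricci curvature is strictly negative, since the null directions of Ricci on $X$ have been quotiented out. Hence $K_{Y}$ is ample and $\Omega_{Y}$ admits no nonzero numerically flat quotient on $Y$. By the local triviality of $f$, any numerically flat quotient of $f^{*}\Omega_{Y}$ on $X$ would descend to a numerically flat quotient of $\Omega_{Y}$ on $Y$, which is impossible. Combined with Lemmas \ref{lem-ch} and \ref{nef_generically_ample}, this yields $\mu^{\min}_{\alpha}(f^{*}\Omega_{Y})>0$ for classes $\alpha$ in the interior of the movable cone, so $f^{*}\Omega_{Y}$ is generically ample.

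Combining (i) and (ii) with the uniqueness of the Fujita decomposition---its flat factor is characterized as the maximal numerically flat quotient of the nef bundle $\Omega_{X}$---the displayed sequence must coincide with the Fujita decomposition of $\Omega_{X}$. The main obstacle is step (ii): translating the differential-geometric statement on $Y$ (strict negativity of the Ricci form) into the algebraic one on $X$ (absence of numerically flat quotients of $f^{*}\Omega_{Y}$). The key leverage is precisely the local triviality of $f$, which prevents the pullback from acquiring new destabilizing quotients; in its absence, the very same question becomes the phenomenon addressed by Theorem \ref{thm-main3}.
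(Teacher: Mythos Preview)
The paper does not give a formal proof of this Observation; the relevant arguments appear later, in Proposition~\ref{prop:1} and the proof of Theorem~\ref{thm-main3}. Your outline---verify that $\Omega_{X/Y}$ is Hermitian flat and that $f^{*}\Omega_{Y}$ is generically ample, then appeal to the uniqueness of the Fujita decomposition---is the correct strategy, and step~(i) is essentially what the paper would say.

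Step~(ii), however, has a gap. Your descent claim (``by the local triviality of $f$, any numerically flat quotient of $f^{*}\Omega_{Y}$ on $X$ would descend to a numerically flat quotient of $\Omega_{Y}$ on $Y$'') is not justified: local triviality of $f$ does not force quotients of pulled-back bundles to descend. One could partly salvage this by restricting along a section $s:Y\to X$, but that is not what you wrote, and you would still need a rigorous reason why $\Omega_Y$ has no torsion-free quotient with $c_1=0$; strictly negative Ricci is an intuition, not an argument (what you actually need is $K_Y$ ample together with $K_Y$-semistability of $\Omega_Y$, so that every quotient has slope at least $\mu(\Omega_Y)>0$). The paper's route in Proposition~\ref{prop:1} avoids descent altogether: from $K_Y$ ample one obtains a K\"ahler--Einstein metric on $Y$, hence $\Omega_Y$ is $K_Y$-semistable; by Mehta--Ramanathan and Miyaoka's criterion, $\Omega_Y|_C$ is ample on a general complete-intersection curve $C\subset Y$; choosing a curve $C'\subset X$ with $f|_{C'}:C'\to C$ finite and surjective, $(f^{*}\Omega_Y)|_{C'}$ is ample, which directly rules out any nonzero numerically flat quotient of $f^{*}\Omega_Y$.
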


Under the weaker assumption that the cotangent bundle $\Omega_{X}$ is nef,
we cannot construct a holomorphic Ricci kernel foliation $\mathcal{R}$
and not expect that the Iitaka fibration $f: X \to Y$ is locally trivial.
However, we can still consider the flat part of the Fujita decomposition of $\Omega_{X}$.
Therefore, it is natural to ask how the Fujita decomposition of $\Omega_{X}$ is related to the Iitaka fibration of $X$.
Specifically, in this paper, we study whether the Fujita decomposition determines the Iitaka fibration and
what relation exists between the flat part and the local triviality.

We first begin with the definition of the Fujita decomposition.

\begin{prop}[{cf.\,\cite[Section 3 ]{Iwai20}}]\label{fujita}
Let $E$ be a nef vector bundle on a smooth projective variety $X$.
\\
$(a)$ There exists a numerically flat vector bundle $F$
and a generically ample vector bundle $H$ such that
the sequence $0 \to H \to E \to F \to 0 $ is exact.
\\
$(b)$ In the case of $E=\Omega_{X}$, we further have:
\begin{enumerate}
\item[$(1)$]  $F$ is Hermitian flat, $\det F$ is a torsion line bundle, and $F^{*} \subset T_{X}$ is a regular foliation.
\item[$(2)$] $0 \to H \to E \to F \to 0 $ splits.
\end{enumerate}
\end{prop}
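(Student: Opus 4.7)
My strategy is to prove part $(a)$ as the general Fujita decomposition for nef vector bundles, and then exploit the particular structure of the cotangent bundle in part $(b)$.

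For $(a)$, the plan is to construct $F$ as the maximal numerically flat quotient of $E$ and check that the kernel $H$ is generically ample. Since numerically flat bundles form an abelian category stable under extensions, subobjects, and quotients \cite{DPS94}, the collection of numerically flat quotients of $E$ admits a maximal element $F$ (of maximal rank). Alternatively, one can iterate the Harder-Narasimhan filtration with respect to a polarization $A^{n-1}$: nefness of $E$ forces $\mu^{\min}_{A^{n-1}}(E) \ge 0$ because every torsion-free quotient has pseudoeffective determinant, and when equality holds the bottom HN graded piece is nef with $c_{1} \cdot A^{n-1} = 0$, hence numerically flat by \cite{DPS94}. Peeling off these pieces gives $F$. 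Setting $H := \ker(E \to F)$, the generic ampleness of $H$ follows from maximality: if $\mu^{\min}_{\alpha}(H) \le 0$ for some movable class $\alpha$, a numerically flat quotient of $H$ would compose with $E \twoheadrightarrow F$ to strictly enlarge $F$, a contradiction, so $\mu^{\min}_{\alpha}(H) > 0$ and Lemma \ref{nef_generically_ample} applies.

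For $(b)(1)$, specialize to $E = \Omega_{X}$ and argue in three steps. The regular foliation property of $F^{*} \subset T_{X}$ follows from integrability: restricting to a sufficiently general complete intersection curve $C$, the sequence $0 \to H|_{C} \to \Omega_{X}|_{C} \to F|_{C} \to 0$ splits by numerical flatness of $F|_{C}$, which forces the O'Neill tensor / second fundamental form to vanish in codimension one, and then globally. Torsion of $\det F$: since $F$ is numerically flat, $\det F \in \mathrm{Pic}^{0}(X)$, and Bogomolov-Sommese vanishing applied to the rank-one subsheaf $\det F \subset \Lambda^{r} \Omega_{X}$ (together with the fact that $F^{*}$ is an integrable foliation) forces $\kappa(\det F) \le 0$, so $\det F$ must be torsion. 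Hermitian flatness of $F$ itself, rather than only of graded pieces, follows from the DPS filtration of numerically flat bundles combined with torsion-ness of $\det F$ in the K\"ahler setting.

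For $(b)(2)$, the plan is to show that the extension class in $H^{1}(X, H \otimes F^{*})$ vanishes. Since $F$ is Hermitian flat, it carries a unitary flat connection, and since $F^{*} \subset T_{X}$ is a regular foliation with torsion canonical class, the leaves are covered by complex tori. The flat connection on $F$ pairs with the Atiyah class of the extension to produce an obstruction in $H^{1}(X, \eend(F))$ which, by Hodge-theoretic analysis for unitary local systems on compact K\"ahler manifolds, must be compatible with the foliation structure; the leaf-wise flatness together with maximality of $F$ forces this obstruction to vanish, yielding a global holomorphic splitting.

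The main obstacle will be the splitting in $(b)(2)$: unlike $(a)$, which is formal from the DPS theory, this step genuinely exploits the embedding $F^{*} \hookrightarrow T_{X}$ as a foliation and is false for a general nef bundle. A secondary difficulty is the torsion-ness of $\det F$, where one must invoke Bogomolov-Sommese in a form valid on compact K\"ahler (rather than only projective) manifolds.
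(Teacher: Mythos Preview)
Your argument for part $(a)$ is essentially correct and matches the standard Fujita-decomposition argument the paper cites from \cite{Iwai20}. The genuine gaps are in part $(b)$.

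For $(b)(1)$, your integrability argument is confused. Splitting of the extension $0 \to H|_{C} \to \Omega_{X}|_{C} \to F|_{C} \to 0$ on a curve $C$ (which is itself not clear, since for $m_i \gg 0$ the genus of $C$ is large and $H^{1}(C, H|_{C}\otimes F^{*}|_{C})$ need not vanish) says nothing about integrability of the distribution $F^{*}\subset T_{X}$: integrability is controlled by the Frobenius tensor $\wedge^{2}F^{*}\to T_{X}/F^{*}$, a section of $\wedge^{2}F\otimes H^{*}$, which is unrelated to the extension class in $H^{1}(X, H\otimes F^{*})$. Your torsion argument for $\det F$ also fails: Bogomolov--Sommese only bounds $\kappa$ of a rank-one subsheaf of $\Omega^{p}_{X}$, but $c_{1}(F)=0$ already gives $\kappa(\det F)\le 0$ trivially, and numerical triviality does not imply torsion. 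For Hermitian (as opposed to merely numerical) flatness you need polystability, which you have not established. For $(b)(2)$, the ``Atiyah class / Hodge-theoretic analysis for unitary local systems'' sketch is too vague to constitute a proof.

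The paper's route is quite different and bypasses all of these issues at once. Instead of taking $F$ as a quotient of $\Omega_{X}$, it defines $\mathcal{F}\subset T_{X}$ directly as the \emph{maximal destabilizing subsheaf of $T_{X}$} with respect to $A^{n-1}$, shows $c_{1}(\mathcal{F})=0$ from pseudoeffectivity of both $\det\mathcal{F}$ and $\det\mathcal{F}^{*}$, and then invokes the foliation results of Loray--Pereira--Touzet \cite[Theorem~5.2]{LPT18} and Pereira--Touzet \cite[Lemma~2.1]{PT14}. These apply to a saturated subsheaf $\mathcal{F}\subset T_{X}$ with $c_{1}(\mathcal{F})=0$ on a variety with $K_{X}$ nef and deliver simultaneously: $\mathcal{F}$ is a regular foliation, $\mathcal{F}$ is polystable (hence Hermitian flat once one knows it is numerically flat), $\det\mathcal{F}^{*}$ is torsion, and $T_{X}=\mathcal{F}\oplus\mathcal{G}$ splits. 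Generic ampleness of $\mathcal{G}^{*}$ then follows from maximality of $\mathcal{F}$ plus Lemma~\ref{nef_generically_ample}. The missing ingredient in your proposal is exactly this foliation machinery; without it, none of integrability, polystability, torsion, or the splitting is available by the elementary means you suggest.
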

\begin{proof}
Conclusion (a) has been proven in \cite[Lemma 3.1]{Iwai20}.
We prove only Conclusion (b) by reviewing \cite[Lemma 3.1]{Iwai20}.

Let $A$ be an ample line bundle on $X$.
When $\mu_{A^{n-1}}^{\min}(\Omega_{X})>0$,
the cotangent bundle $\Omega_{X}$ is generically ample by Lemma \ref{nef_generically_ample}.
Then, we see that $F:=0$ and $H:=\Omega_{X}$ satisfy the desired conclusions.
Hence, we may assume that $\mu_{A^{n-1}}^{\min}(\Omega_{X})=0$.

In this case, 
we define $\mathcal{F}$ by the maximal destabilizing subsheaf of $T_X $ with respect to $A^{n-1}$.
Then, we have
$$
\mu_{A^{n-1}}(\mathcal{F})=\mu_{A^{n-1}}^{\max}(T_X) = -\mu_{A^{n-1}}^{\min}(\Omega_{X}) =0.
$$
The sheaf $\mathcal{F}$ is pseudoeffective, as is $\det \mathcal{F}^{*}$
since it is a quotient of the nef cotangent bundle.
We obtain $c_1(\mathcal{F})=0 \in H^{1,1}(X, \mathbb{R})$
from $c_1(\mathcal{F}^{*})A^{n-1}=0$.
Hence, the sheaf $\mathcal{F}$ is a numerically flat vector bundle
(see \cite[Theorem 1.20]{GKP16} or \cite[Theorem 1.2]{HIM21}).
On the other hand, the canonical bundle $K_X$ is nef,
and $\mathcal{F}$ is a saturated subsheaf with $c_1(\mathcal{F})=0$. 
Thus, by \cite[Theorem 5.2]{LPT18} and \cite[Lemma 2.1]{PT14}, we see that 
\begin{itemize}
\item[$\bullet$] $\mathcal{F} \subset T_{X}$ is a regular foliation,
\item[$\bullet$] $\mathcal{F}$ is polystable with respect to $A$,
\item[$\bullet$] $K_{\mathcal{F}}=\det \mathcal{F}^{*}$ is a torsion line bundle,
\item[$\bullet$] $T_X =  \mathcal{F}\oplus \mathcal{G}$ holds for some regular foliation $\mathcal{G} \subset T_{X}$.
\end{itemize}
The vector bundle $\mathcal{F}$ is numerically flat and polystable,
and thus $F$ is Hermitian flat.
It remains to show that $\mathcal{G}^{*}$ is generically ample.
If $\mu_{A^{n-1}}^{\max}(\mathcal{G}) \ge 0$,
we can find a nonzero subsheaf $\mathcal{N} \subset \mathcal{G}$ with $\mu_{A^{n-1}}(\mathcal{N}) \ge 0$.
This contradicts the maximality of $\mathcal{F}$
by $\mu_{A^{n-1}}(\mathcal{N} \oplus \mathcal{F}) \ge 0$.
Hence, we obtain $\mu_{A^{n-1}}^{\max}(\mathcal{G}) <0$,
and thus $\mathcal{G}^{*}$ is generically ample with respect to $A$. 
By Lemma \ref{nef_generically_ample}, we see that $\mathcal{G}^{*}$  is generically ample 
since $\mathcal{G}^{*}$ is nef.
\end{proof}

\begin{ex}
Conclusion (b) in Proposition \ref{fujita} does not hold for a nef vector bundle $E \not =\Omega_{X}$.
A numerically flat vector bundle is not necessarily Hermitian flat (for example, see \cite{DPS94}).
Hence, Property (1) does not hold in general.
When $X$ admits an ample line bundle $L$ with $H^1(X,L) \neq0$,
we obtain a nonsplitting exact sequence:
$$
0 \rightarrow L \rightarrow E \rightarrow \mathcal{O}_{X} \rightarrow 0
$$
from $H^1(X,L) \cong {\rm Ext}^{1}(\mathcal{O}_{X}, L) \neq 0$.
This sequence gives the Fujita decomposition of $E$ but does not split by construction.
\end{ex}

The Fujita decomposition shows that Conjecture \ref{Abundance_nef}
can be reduced to the case where $\Omega_{X}$ is nef and generically ample.

\begin{conj}$\hspace{-0.2cm}{}_{\le n}$\label{Abundance_nef}
Let $X$ be a smooth projective variety of dimension $n$ with a nef cotangent bundle.
Then, the canonical bundle $K_X$ is semi-ample.
\end{conj}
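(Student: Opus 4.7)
The plan is to use the Fujita decomposition from Proposition \ref{fujita}(b) to reduce Conjecture \ref{Abundance_nef} to the case where $\Omega_{X}$ is both nef and generically ample, and then to handle that reduced case separately. By Proposition \ref{fujita}(b), the sequence $0 \to \mathcal{H} \to \Omega_{X} \to \mathcal{F} \to 0$ splits, giving $\Omega_{X} \cong \mathcal{H} \oplus \mathcal{F}$, where $\mathcal{F}$ is Hermitian flat with torsion determinant and $\mathcal{F}^{*} \subset T_{X}$ is a regular foliation, while $\mathcal{H}$ is nef and generically ample. After a finite \'etale cover, we may assume $\det \mathcal{F} \cong \mathcal{O}_{X}$, so that $K_{X} = \det \mathcal{H}$ up to $\mathbb{Q}$-linear equivalence.

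First, I would analyze the regular foliation $\mathcal{F}^{*} \subset T_{X}$, whose canonical class is now numerically trivial. By structure theorems for regular foliations with numerically trivial canonical class (due to Touzet, Druel, and Greb--Guenancia--Kebekus), after a further finite \'etale cover, the foliation $\mathcal{F}^{*}$ is induced by a smooth fibration $p \colon X \to Y$ onto a smooth projective variety $Y$ whose fibers are abelian varieties of dimension $\rk \mathcal{F}$. The splitting $T_{X} = \mathcal{F}^{*} \oplus \mathcal{H}^{*}$ ensures that $\mathcal{H}$ descends as $\mathcal{H} \cong p^{*}\Omega_{Y}$, so that $\Omega_{Y}$ is nef and generically ample, and $K_{X} \sim_{\mathbb{Q}} p^{*} K_{Y}$. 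Consequently, semi-ampleness of $K_{X}$ is equivalent to semi-ampleness of $K_{Y}$, reducing matters to the case $\mathcal{F}=0$, namely $\Omega_{X}$ nef and generically ample.

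Second, in the generically ample case, the aim would be to prove that $K_{X}$ is big, after which the basepoint-free theorem applied to the nef and big divisor $K_{X}$ yields semi-ampleness. One natural route is to exploit the positivity of $S^{m}\Omega_{X}$ on general complete intersection curves, combined with a cohomological spreading argument to produce pluricanonical sections; alternatively, one could attempt a Viehweg--Zuo style argument based on the slope stability of $\Omega_{X}$ along the Harder--Narasimhan filtration, or appeal to Miyaoka-type inequalities relating $K_{X}^{n}$ to Chern classes of the generically ample cotangent bundle.

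The main obstacle is this final generically ample step. Without an additional Chern-class hypothesis (such as $c_{2}(X) = 0$, which drives Theorems \ref{thm-main} and \ref{thm-main2}), producing even a single nonzero pluricanonical section from generic ampleness of $\Omega_{X}$ appears to require a genuinely new input. By contrast, the first phase of the argument---the reduction to generic ampleness via the Fujita decomposition and foliation structure theorems---is essentially formal once the splitting of Proposition \ref{fujita}(b) is invoked. Thus, under the framework of this paper, Conjecture \ref{Abundance_nef} is reduced to establishing bigness of $K_{X}$ in the nef generically ample regime, which is where the essential difficulty remains concentrated.
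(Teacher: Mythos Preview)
The statement is a conjecture; the paper does not prove it. What the paper does establish, in the proposition immediately following the conjecture, is the reduction you describe: when the flat part $\mathcal{F}$ of the Fujita decomposition is nonzero, Conjecture~\ref{Abundance_nef}${}_{\le n-1}$ implies that $K_X$ is semi-ample, so the conjecture reduces inductively to the generically ample case. Your final paragraph correctly identifies that case as the essential open difficulty, and the paper makes no attempt at it either.

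Your route to the reduction, however, differs from the paper's and contains a gap. You assert that the Hermitian flat regular foliation $\mathcal{F}^{*}\subset T_{X}$ is, after a finite \'etale cover, induced by a smooth fibration $p\colon X\to Y$ with abelian-variety fibers. The structure theorems you cite yield the splitting $T_{X}=\mathcal{F}^{*}\oplus\mathcal{H}^{*}$ into complementary regular foliations---this is already recorded in Proposition~\ref{fujita}(b)---but they do not in general force algebraic integrability of $\mathcal{F}^{*}$. Indeed, Proposition~\ref{prop:1} and Theorem~\ref{thm-main3} show that even once the Iitaka fibration $f\colon X\to Y$ is known to exist, $\mathcal{F}^{*}$ is only a \emph{proper} subfoliation of $T_{X/Y}$ unless $f$ happens to be locally trivial; a fibration with relative tangent bundle exactly $\mathcal{F}^{*}$ would thus have to be strictly finer than the Iitaka fibration, and you have supplied no argument that its leaves are closed.

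The paper avoids this issue entirely. It uses the unitary representation $\rho\colon\pi_{1}(X)\to U(r,\mathbb{C})$ attached to the Hermitian flat bundle $\mathcal{F}$. If $\mathrm{Im}(\rho)$ is finite, $\mathcal{F}$ trivializes on a finite cover, so $H^{0}(X,T_{X})\neq 0$ and Lieberman's theorem splits $X\cong A\times Y$ with $A$ abelian; induction applies to $Y$. If $\mathrm{Im}(\rho)$ is infinite, one passes to the semi-simple quotient of its Zariski closure and uses the associated Shafarevich map: by \cite{PT14} the image is not virtually solvable, hence the Shafarevich base is positive-dimensional, and by \cite{CCE15} it is of general type; abundance of $K_{X}$ then follows from the inductive hypothesis via \cite[Proposition~2.10]{GKP20b}. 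No fibration with tangent foliation $\mathcal{F}^{*}$ is ever constructed.
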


\begin{prop}
Let $X$ be a smooth projective variety of dimension $n$ with the nef cotangent bundle $\Omega_{X}$.
The flat part of the Fujita decomposition of $\Omega_{X}$ is nonzero.
Then, if Conjecture \ref{Abundance_nef}${}_{\le n-1}$ holds, then $K_X$ is semi-ample.
\end{prop}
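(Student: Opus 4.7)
The plan is to use the nonzero flat summand of the Fujita decomposition to exhibit, after a finite \'etale cover, a smooth fibration of $X$ onto a lower-dimensional base with nef cotangent bundle, and then to apply the inductive hypothesis.

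By Proposition \ref{fujita}, one writes $\Omega_{X} = \mathcal{H} \oplus \mathcal{F}$ as a direct sum, where $\mathcal{F} \neq 0$ is Hermitian flat with $\det \mathcal{F}$ torsion, and dually $T_{X} = \mathcal{G} \oplus \mathcal{F}^{*}$, with $\mathcal{F}^{*} \subset T_{X}$ a regular foliation whose canonical class is torsion. Replacing $X$ by a finite \'etale cover $\pi : X' \to X$ which trivializes $\det \mathcal{F}$, the regular foliation $\pi^{*}\mathcal{F}^{*}$ has trivial canonical class. Since $K_{X'}$ is nef, I would invoke the structure theorem for regular foliations with numerically trivial canonical class on projective manifolds whose canonical class is nef (as in \cite{PT14, LPT18}), combined with the Hermitian flatness of $\mathcal{F}$, to conclude that $\pi^{*}\mathcal{F}^{*}$ is algebraically integrable and is the relative tangent sheaf of a smooth locally trivial fibration $f : X' \to Y$ onto a smooth projective variety $Y$, whose fibers are compact complex tori. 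Since $\mathcal{F} \neq 0$, the positive-dimensionality of the fibers forces $\dim Y < n$.

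It then remains to reduce semi-ampleness of $K_{X}$ to that of $K_{Y}$. Because the fibers of $f$ are tori, $K_{X'/Y}$ is trivial and $K_{X'} = f^{*}K_{Y}$. From the split exact sequence pulled back from $X$, one identifies $f^{*}\Omega_{Y}$ with $\pi^{*}\mathcal{H}$, which is a direct summand of the nef bundle $\Omega_{X'}$ and hence itself nef; by surjectivity of $f$, it follows that $\Omega_{Y}$ is nef. As $\dim Y < n$, the assumed Conjecture \ref{Abundance_nef}${}_{\le n-1}$ applied to $Y$ gives that $K_{Y}$ is semi-ample, so $K_{X'} = f^{*}K_{Y}$ is semi-ample, and semi-ampleness descends along the finite \'etale cover $\pi$ to yield that $K_{X}$ is semi-ample.

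The main obstacle is the second step: producing the smooth fibration $f$ from the flat foliation $\mathcal{F}^{*}$. What one needs is that this regular foliation with trivial canonical class is algebraically integrable and is realized by an honest smooth morphism (rather than a bimeromorphic or merely almost holomorphic map), and that the fibers are complex tori rather than only Calabi--Yau manifolds; the latter uses Hermitian flatness in an essential way, via the trivialization of the tangent bundles of the fibers after an \'etale cover. Packaging these ingredients, which are available in the literature on foliations with $c_{1} = 0$, constitutes the technical heart of the argument.
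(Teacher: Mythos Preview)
Your proposal has a genuine gap at exactly the point you identify as the ``main obstacle'': producing a smooth fibration $f:X'\to Y$ from the Hermitian flat foliation $\mathcal{F}^{*}$. The references \cite{PT14, LPT18} give you regularity, polystability, the splitting $T_X=\mathcal{F}^{*}\oplus\mathcal{G}$, and torsion of $K_{\mathcal{F}^{*}}$, but they do not furnish algebraic integrability of $\mathcal{F}^{*}$ with a smooth projective leaf space. In fact, the paper's own Theorem~\ref{thm-main3} shows that even \emph{assuming} abundance, the identity $\mathcal{F}=\Omega_{X/Y}$ for the Iitaka fibration $f:X\to Y$ holds if and only if $f$ is locally trivial, and H\"oring's examples show this fails in general. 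So your claim that $\mathcal{F}^{*}$ is the relative tangent sheaf of a smooth locally trivial torus fibration is strictly stronger than what is true, and cannot be extracted from the literature you cite.

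The paper's proof avoids this entirely. It analyses the unitary monodromy $\rho:\pi_1(X)\to U(r,\C)$ of the Hermitian flat bundle $\mathcal{F}$ and splits into two cases. If $\Image{\rho}$ is finite, a finite \'etale cover trivializes $\mathcal{F}$, so $H^0(X,T_X)\neq 0$ and Lieberman's theorem gives a product splitting $X\cong A\times Y$ with $A$ abelian; one then applies the inductive hypothesis to $Y$. If $\Image{\rho}$ is infinite, one passes to the semisimple quotient $G/\Rad(G)$ of the Zariski closure $G$ of $\Image{\rho}$; by \cite[Corollary 2.7]{PT14} the image is not virtually solvable, so the corresponding Shafarevich map $\sha^K:X\dashrightarrow\Sha^K(X)$ has positive-dimensional image, which is of general type by \cite{CCE15}. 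Abundance then follows from the inductive hypothesis together with \cite[Proposition 2.10]{GKP20b}. No fibration is ever produced directly from the foliation $\mathcal{F}^{*}$.
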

\begin{proof}
Let $\Omega_{X}=F \oplus H$ be the Fujita decomposition as in Lemma \ref{fujita}.
Since $F$ is a nonzero Hermitian flat vector bundle by assumption,
there exists a representation
$$
\rho: \pi_1(X) \rightarrow U(r, \C)
$$
such that $F \cong X_{\univ} \times_{\rho} \C^{r}$,
where $r := {\rm rk} F >0$ and $X_{\univ}$ is the universal cover of $X$.

In the case where $\Image{\rho}$ is finite,
we may assume that $F $ is a trivial vector bundle
after replacing $X$ with a finite \'etale cover.
Then, we have $H^{0}(X, T_{X}) \neq 0$,
and thus, up to finite \'etale covers,
we can find an abelian variety $A$ and a smooth projective variety $Y$ with $H^0 (Y,T_Y)=0$
such that $X \cong A \times Y$ by a theorem of Lieberman \cite{Lie78} (cf.\,\cite[Theorem 3.2]{Amoros}).
The canonical bundle $K_Y$ is semi-ample by Conjecture \ref{Abundance_nef},${}_{\le n-1}$,
and thus so is $K_X$.

We may assume that $\Image{\rho}$ is infinite.
Let $G$ be the algebraic Zariski closure of $\Image{\rho}$.
Up to finite \'etale covers, we may assume that $G$ is connected.
Let $K$ be the kernel of $\bar \rho$ defined by
$$ \bar \rho: \pi_1(X) \xrightarrow{\quad \rho \quad} G \twoheadrightarrow G / \Rad(G),$$
where $\Rad(G)$ is the solvable radical of $G$.
By Selberg's Lemma, we may assume that the image of $\bar \rho$ is torsion-free.
We consider the $K$-Shafarevich map $\sha^{K}: X \dashrightarrow \Sha^{K}(X)$.
Since $\Image{\rho}$ is not virtually solvable by \cite[Corollary 2.7]{PT14},
we obtain $\dim \Sha^{K}(X)>0$.
Since $G / \Rad(G)$ is semi-simple and the image of $\bar \rho$ is torsion-free,
the base variety $\Sha^{K}(X)$ is of general type
by \cite[Th\'eor\`eme 1]{CCE15}.
The canonical bundle $K_X$ is abundant
by Conjecture \ref{Abundance_nef}${}_{\le n-1}$ and \cite[Proposition 2.10]{GKP20b}.
\end{proof}

\subsection{Hermitian flatness of nef cotangent bundles}\label{section:flat}

In this subsection, assuming the abundance conjecture,
we study the associated Iitaka fibration $f: X\to Y$ of
a smooth projective variety $X$ with the nef cotangent bundle $\Omega_{X}$.
In particular, we study a relation among
the Fujita decomposition of $\Omega_{X}$,
a certain flatness of $\Omega_{X/Y}$, and
a variation of the fibers.

Compared to Observation \ref{obs1},
when $\Omega_{X}$ is merely nef,
the Iitaka fibration $f: X \to Y$ is not necessarily locally trivial.
This difference can be captured by the moduli of the abelian varieties.
Precisely, assuming the abundance conjecture,
H\"oring in \cite{Hor13} proved that, up to finite \'etale covers of $X$, 
the Iitaka fibration $f: X \to Y$ is an abelian group scheme, i.e.,
there exists a fibration $g: Y \to Z$ onto a subvariety $Z$
in the fine moduli of polarized abelian varieties with level structure
so that $f: X \to Y$ is recovered from the fiber product via $g: Y \to Z$.
From this viewpoint, the local triviality of $f: X \to Y$ can be rephrased as $Z$ being the one point.
In this subsection,
we prove that the dimension $\dim Z$,
which measures how far $f: X \to Y$ is from the local triviality of $f$,
is related to a certain flatness.

Keeping the above situation,
we prove Theorems \ref{thm-main3} and \ref{thm-flat} in this subsection.

\smallskip

We consider the Fujita decomposition of the nef cotangent bundle $\Omega_{X}$:
\begin{align}\label{eq-fuj}
0 \to \mathcal{H} \to \Omega_{X} \to \mathcal{F} \to 0, 
\end{align}
where $\mathcal{F}$ is a Hermitian flat vector bundle and
$\mathcal{H}$ is a generically ample vector bundle.
The relative cotangent bundle $\Omega_{X/Y}$ is nef
by the standard exact sequence of tangent bundles
\begin{align}\label{eq-sta}
0 \to f^{*}\Omega_{Y} \to \Omega_{X}  \to \Omega_{X/Y}  \to 0. 
\end{align}
Hence, we can consider the Fujita decomposition of $\Omega_{X/Y}$:
\begin{align}\label{eq-fuj2}
0 \to \mathcal{G} \to \Omega_{X/Y} \to \mathcal{E} \to 0, 
\end{align}
where $\mathcal{E}$ is a numerically flat vector bundle and
$\mathcal{G}$ is a generically ample vector bundle.
Then, we have:

\begin{prop}\label{prop:1}
The flat part of $\mathcal{F}$ of $\Omega_{X}$ coincides
with the flat part $\mathcal{E}$ of $\Omega_{X/Y}$.
\end{prop}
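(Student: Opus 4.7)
The plan is to identify $\mathcal{F}$ and $\mathcal{E}$ as two instances of the same universal object---namely, the maximal numerically flat torsion-free quotient of $\Omega_X$ and of $\Omega_{X/Y}$, respectively---and to match them via the canonical surjection $\Omega_X \twoheadrightarrow \Omega_{X/Y}$. The key algebraic ingredient is that any morphism $\psi\colon \mathcal{V} \to \mathcal{W}$ with $\mathcal{V}$ generically ample and $\mathcal{W}$ numerically flat is zero: for any ample class $\alpha$ one has $\mu^{\max}_{\alpha}(\mathcal{V}^{*})<0$ while $\mu^{\max}_{\alpha}(\mathcal{W})\leq 0$, so the standard slope inequality gives $\mu^{\max}_{\alpha}(\mathcal{V}^{*}\otimes \mathcal{W})<0$ and hence $H^{0}(X,\mathcal{H}\mathrm{om}(\mathcal{V},\mathcal{W}))=0$. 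Applied to the Fujita decomposition of $\Omega_X$, this shows every numerically flat torsion-free quotient of $\Omega_X$ factors uniquely through $\mathcal{F}$, and analogously for $\mathcal{E}$ in $\Omega_{X/Y}$. Since $\mathcal{E}$ is itself a numerically flat quotient of $\Omega_X$ via $\Omega_X\twoheadrightarrow\Omega_{X/Y}\twoheadrightarrow\mathcal{E}$, the universal property of $\mathcal{F}$ immediately produces a canonical surjection $\mathcal{F}\twoheadrightarrow\mathcal{E}$.

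The main step is the reverse direction, which amounts to showing that the composition $\phi\colon f^{*}\Omega_Y\hookrightarrow\Omega_X\twoheadrightarrow\mathcal{F}$ is zero. Once this vanishing is known, $\mathcal{F}$ becomes a numerically flat quotient of $\Omega_{X/Y}$, so the universal property of $\mathcal{E}$ supplies $\mathcal{E}\twoheadrightarrow\mathcal{F}$, and two mutually surjective locally free sheaves of the same rank must be isomorphic. To prove $\phi=0$ I would argue by contradiction: assume the saturated image $\mathcal{I}\subseteq\mathcal{F}$ of $\phi$ has positive rank $r$. After replacing $X$ by the finite \'etale cover supplied by H\"oring's theorem (under which $f\colon X\to Y$ becomes an abelian group scheme with $K_Y$ ample and the Fujita decompositions are equivariant), a general fiber $F$ of $f$ is a complex torus. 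On $F$, the line bundle $\det\mathcal{I}|_{F}$ is simultaneously a quotient of $(f^{*}\Lambda^{r}\Omega_Y)|_{F}\cong\mathcal{O}_{F}^{\binom{\dim Y}{r}}$ (hence effective) and a line subsheaf of the Hermitian flat $\Lambda^{r}\mathcal{F}|_{F}$ (hence of nonpositive slope with respect to any K\"ahler class on $F$); combining these, $\det\mathcal{I}|_{F}\in\mathrm{Pic}^{0}(F)$ is generated by global sections and therefore trivial. Thus $\det\mathcal{I}$ is trivial on every fiber and descends: $\det\mathcal{I}\cong f^{*}M$ for some line bundle $M$ on $Y$, with $c_{1}(M)=0$ by a parallel slope argument. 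Pushing $f^{*}\Lambda^{r}\Omega_Y\twoheadrightarrow\det\mathcal{I}$ forward along $f$ (using $f_{*}\mathcal{O}_X=\mathcal{O}_Y$) yields a line quotient $\Lambda^{r}\Omega_Y\twoheadrightarrow M$, equivalently a line subsheaf $M^{-1}\otimes K_Y\hookrightarrow\Lambda^{\dim Y - r}\Omega_Y$ that is numerically equivalent to $K_Y$. Bogomolov--Sommese vanishing gives $\kappa(M^{-1}\otimes K_Y)\leq\dim Y - r$, but $M^{-1}\otimes K_Y$ is big (since $K_Y$ is ample by H\"oring), contradicting $r\geq 1$.

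The main obstacle is the vanishing of $\phi$ in the second paragraph. Both ingredients supplied by H\"oring's structure theorem---that the fibers are complex tori, used to trivialize $\det\mathcal{I}|_{F}$, and that $K_Y$ is ample, used to invoke Bogomolov--Sommese---appear indispensable to this route. A secondary technical point is verifying the equivariance of the two Fujita decompositions under the \'etale cover appearing in H\"oring's theorem, so that the proof on the cover descends to the original statement.
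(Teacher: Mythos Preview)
Your approach is correct and genuinely different from the paper's. Both arguments work after passing to H\"oring's \'etale cover (so $f$ is an abelian scheme with $K_Y$ ample), and both reduce to showing the composite $\phi\colon f^{*}\Omega_Y\hookrightarrow\Omega_X\twoheadrightarrow\mathcal{F}$ vanishes. The paper proves the stronger assertion that $f^{*}\Omega_Y$ is \emph{generically ample}: since $K_Y$ is ample, $Y$ carries a K\"ahler--Einstein metric, so $\Omega_Y$ is $K_Y$-semistable; Mehta--Ramanathan plus Miyaoka's criterion then make $\Omega_Y|_C$ ample on a general complete-intersection curve $C\subset Y$, and this lifts to $X$. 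Once $f^{*}\Omega_Y$ is generically ample, your own slope lemma from the first paragraph kills $\phi$ in one line, and a snake-lemma diagram identifies $0\to (T_X/\mathcal{E}^{*})^{*}\to\Omega_X\to\mathcal{E}\to 0$ as the Fujita decomposition of $\Omega_X$.

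Your contradiction via Bogomolov--Sommese on $Y$ trades the K\"ahler--Einstein/Mehta--Ramanathan input for a descent argument; this is valid but less direct, since the universal property you set up would already finish the proof if combined with the paper's generic-ampleness step. One small inaccuracy: the map $f^{*}\Lambda^{r}\Omega_Y\to\det\mathcal{I}$ need not be surjective when $\mathcal{I}$ is the \emph{saturation} of $\mathrm{Im}(\phi)$, only nonzero---but nonzero is all you need, since after pushforward (using $\mathrm{Hom}(f^{*}A,f^{*}B)\cong\mathrm{Hom}(A,B)$) you still get a nonzero $\Lambda^{r}\Omega_Y\to M$, hence an injection $K_Y\otimes M^{-1}\hookrightarrow\Lambda^{\dim Y-r}\Omega_Y$ to feed into Bogomolov--Sommese. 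The claim $c_1(M)=0$ is also fine: $\mathcal{I}$ is saturated in the numerically flat $\mathcal{F}$, so both $c_1(\mathcal{F}/\mathcal{I})$ and $-c_1(\mathcal{I})$ are pseudoeffective, forcing $c_1(\mathcal{I})=0$. Your concern about equivariance of the Fujita decomposition under \'etale covers applies equally to the paper's proof and is handled by uniqueness of the maximal destabilizing subsheaf.
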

\begin{proof}

We first prove that $f^{*}\Omega_{Y}$ is a generically ample vector bundle on $X$.
For this purpose, we confirm that $\Omega_{Y}$ (and thus $f^{*}\Omega_{Y}$) is a nef vector bundle on $X$.
There exists a section $s:Y \to X$ of $f:X \to Y$ since it is an abelian group scheme,
and thus
we obtain a surjective bundle morphism $\Omega_{X}|_{s(Y)} \to \Omega_{s(Y)}$ on the image $s(Y) \subset X$.
This implies that  $\Omega_{Y}$ is nef
since $\Omega_{s(Y)} $ can be identified with $\Omega_{Y}$
via the isomorphism $s:Y \cong s(Y)$.

We now show that $f^{*}\Omega_{Y}$ is a generically ample vector bundle.
The canonical bundle $K_{Y}$ is ample by \cite{Hor13};
hence, $Y$ admits a K\"ahler-Einstein metric.
This implies that $\Omega_{Y}$ is semistable.
Let us consider a curve $C:=A_{1} \cap A_{2} \cap \cdots \cap A_{\dim Y -1}$
defined by general members $A_{i} \in |m K_{Y}|$, where $m \gg 1$.
Then, by Mehta-Ranamathan's theorem,
the restriction $\Omega_{Y}|_{C}$ is also stable.
Miyaoka's characterization of semistabilities
shows that the $\mathbb Q$-vector bundle
$$
\Omega_{Y}|_{C}  \otimes \det (\Omega_{Y}|_{C})^{1/\dim Y -1}
$$
is nef.
This implies that $\Omega_{Y}|_{C}$ is ample since $K_{Y}$ is ample.
We take a curve $C' \subset X$ such that the restriction $f|_{C'}: C' \to C$ is a surjective morphism.
Then, we see that $(f^{*} \Omega_{Y})|_{C'}=(f|_{C'})^{*} (\Omega_{Y}|_{C})$ is an ample vector bundle on $C'$
since $f|_{C'}: C' \to C$ is a finite morphism.
However, for the flat part $\mathcal{F}'$ of the nef vector bundle $f^{*}\Omega_{Y}$,
we have the surjective morphism $(f^{*} \Omega_{Y} )|_{C'} \to \mathcal{F'}|_{C'}$
by restricting $f^{*}\Omega_{Y} \to \mathcal{F'}$ to $C'$.
The vector bundle $(f^{*} \Omega_{Y} )|_{C'}$ is ample by the choice of $C'$,
but the restriction $\mathcal{F'}|_{C}$ is still flat.
This is a contradiction.
Hence, we conclude that $\mathcal{F}' =0$ (i.e., $f^{*}\Omega_{Y}$ is generically ample).

By using exact sequences \eqref{eq-fuj}, \eqref{eq-sta}, and \eqref{eq-fuj2},
the snake lemma yields the following commutative diagram.

\begin{align}\label{eq-comm}
\xymatrix{
&&0&0&&\\
0 \ar[r]& \mathcal{G^{*}} \ar[r] & T_{X}/\mathcal{E}^{*} \ar[r] \ar[u] & f^{*} T_{Y}\ar[r] \ar[u]&0 &\\
 & & T_{X} \ar[u] \ar@{=}[r]&T_{X}  \ar[u]&&\\
&0 \ar[r]&\mathcal{E}^{*} \ar[u] \ar[r]& T_{X/Y} \ar[u] \ar[r]&\mathcal{G}^{*}\ar[r]& 0\\
&&0\ar[u]&0.\ar[u]&&
}
\end{align}
The dual bundle $(T_{X}/\mathcal{E}^{*})^{*}$ is generically ample
since $\mathcal{G}$ and $f^{*} \Omega_{X}$ are as well.
This implies that the (dual of) the left column in the above diagram
actually corresponds to the Fujita decomposition of $\Omega_{X}$.
Hence, the flat part $\mathcal{F}$ of $\Omega_{X}$ is
the flat part $\mathcal{E}$ of $\Omega_{X/Y}$.
\end{proof}

Theorem \ref{thm-main3} immediately follows from Proposition \ref{prop:1}.

\begin{proof}[Proof of Theorem \ref{thm-main3}]
The moduli part $M$ of $f: X \to Y$ defined by $f^{*}M:=K_{X/Y}$ is
semi-ample by \cite[Theorem 1.2]{Fuj} and \cite{Ue78}.
The semi-ample fibration $h:Y \to W$ of $M$ coincides with $g: Y \to Z$,
and thus $M=g^{*}A$ holds for some ample line bundle $A$ on $Z$.
Indeed, for a point $z \in Z$,
the restriction $f|_{X_{z}}:X_{z}:= (g\circ f)^{-1}(z) \to Y_{z}:=g^{-1}(z)$
is a locally trivial fibration, and thus $M|_{Y_{z}}$ is trivial.
Conversely, for a point $w \in W$,
the relative canonical bundle $K_{X_{w}/Y_{w}}$
is trivial, and thus $f|_{X_{w}}:X_{w}:= (h\circ f)^{-1}(w) \to Y_{w}:=g^{-1}(w)$ is locally trivial.
This implies that $g (Y_{w})$ comprises one point.

If $f: X \to Y$ is locally trivial (i.e., $\dim Z =0$),
we have $c_{1}(\Omega_{X/Y})=0$ by $K_{X/Y}=f^{*}M=f^{*}g^{*}A$.
Hence, since $\Omega_{X/Y}$ is nef, we see that $\Omega_{X/Y}$ is numerically flat, and thus
the vector bundle $\Omega_{X/Y}$ coincides with the flat part of $\Omega_{X}$,
which is Hermitian flat.
Hence, Condition (3) leads to Conditions (1) and (2).

It is obvious that Condition (1) is equivalent to Condition (2).
Therefore, it remains to show that Condition (1) leads to Condition (3).
The Fujita decomposition of cotangent bundles always splits,
and the sequence \eqref{eq-sta} of tangent bundles also splits
under Condition (1).
This implies that the Kodaira-Spencer map vanishes everywhere.
Hence, the Iitaka fibration $f: X \to Y$ is locally trivial.
\end{proof}

We finally prove Theorem \ref{thm-flat}.
\begin{proof}[Proof of Theorem \ref{thm-flat}]

By pushing forward \eqref{eq-comm} by $f$,
we obtain:
\begin{align}
\label{eq-comm2}\xymatrix{
&&R^{1}f_{*} \mathcal{E}^{*} \ar[r]&R^{1}f_{*} T_{X/Y}&&\\
0 \ar[r]& f_{*}\mathcal{G^{*}} \ar[r] & f_{*}\mathcal{H}^{*}=f_{*}(T_{X}/\mathcal{E}^{*})
 \ar[r]^{\qquad \quad \psi} \ar[u] &  T_{Y}\ar[r] \ar[u]_{\varphi}&
 \cdots  &\\
 & & f_{*}T_{X} \ar[u] \ar@{=}[r]&f_{*}T_{X}  \ar[u]&&\\
&0 \ar[r]&f_{*} \mathcal{E}^{*} \ar[u] \ar[r]&f_{*} T_{X/Y} \ar[u] \ar[r]&  \cdots & \\
&&0\ar[u]&0,\ar[u]&&
}
\end{align}
where $\psi$ and $\varphi$ are the induced morphisms.
Note that $\varphi$ is the Kodaira-Spencer map.
The Fujita decomposition \eqref{eq-fuj} splits,
and thus the induced morphism $f_{*}\mathcal{H}^{*} \to R^{1}f_{*} \mathcal{E}^{*} $
is the zero map.
Hence, for a general point $y \in Y$,
we obtain
$$\Ker{\varphi_{y}} =  \Image{\psi_{y}} \cong (f_{*}\mathcal{H}^{*})_{y}/ (f_{*}\mathcal{G}^{*})_{y}
=H^{0}(X_{y}, \mathcal{H}^{*}|_{X_{y}})/H^{0}(X_{y}, \mathcal{G}^{*}|_{X_{y}}).$$
Furthermore, we have $T_{Y/Z, y} = \Ker{\varphi_{y}}$
since $g: Y \to Z$ is the morphism into the moduli of the abelian varieties
as in the proof of Theorem \ref{thm-main3}.
Hence, we obtain
\begin{align*}
\dim Y - \dim Z &= h^{0}(X_{y}, \mathcal{H}^{*}|_{X_{y}}) - h^{0}(X_{y}, \mathcal{G}^{*}|_{X_{y}})
\end{align*}
We now have the exact sequence
$$0 \to  \mathcal{E}^{*}|_{X_{y}} \to T_{X/Y}|_{X_{y}}  \to \mathcal{G}^{*}|_{X_{y}} \to 0$$
from \eqref{eq-fuj}.
The vector bundle $\mathcal{E}^{*}|_{X_{y}}$ is numerically flat, and $T_{X/Y}|_{X_{y}}$ is a trivial vector bundle.
This implies that this sequence splits, 
and thus that $\mathcal{G}^{*}|_{X_{y}}$ is numerically flat and globally generated.
Hence, we see that $\mathcal{G}^{*}|_{X_{y}}$ is a trivial vector bundle by \cite[Proposition 1.16]{DPS94}.
This implies that $h^{0}(X_{y}, \mathcal{G}^{*}|_{X_{y}}) = \dim X - \dim Y - \rk \mathcal{E}$,
and thus, we conclude that
\begin{align}\label{key}
\dim Z = \rk \mathcal{H} - h^{0}(X_{y}, \mathcal{H}^{*}|_{X_{y}}). 
\end{align}

For the proof of Theorem \ref{thm-flat}, we investigate $h^{0}(X_{y}, \mathcal{H}^{*}|_{X_{y}})$ in detail. 
Let us consider the induced commutative diagram:
\begin{align}\label{eq-comm3}
\xymatrix{
&0&0&&&\\
0 \ar[r]& \mathcal{G}|_{X_{y}} \cong T_{\rk \mathcal G} \ar[u]\ar[r] & \Omega_{X/Y}|_{X_{y}} \cong T_{n-m} \ar[r] \ar[u] 
& \mathcal{E}|_{X_{y}} \cong T_{\rk \mathcal{E}}
\ar[r] &0 &\\
0 \ar[r] & \mathcal{H}|_{X_{y}} \ar[u] \ar[r] & \Omega_{X}|_{X_{y}} \ar_{p}[u] \ar[r]& \mathcal{E}|_{X_{y}} \cong T_{\rk \mathcal{E}}\ar@{=}[u]\ar[r]
&0&\\
& &f^{*}\Omega_{Y}|_{X_{y}} \cong T_{m} \ar_{i}[u] && \\
&&0,\ar[u]&&&
}
\end{align}
where $n=\dim X$ and $m:= \dim Y$.
Here, the notation $T_{k}$ denotes the trivial vector bundle on $X_{y}$ of rank $k$.
Note that $\Omega_{X}|_{X_{y}} $ is numerically flat from the middle column.

The following lemma will be applied to $H:=\mathcal{H}|_{X_{y}}$.

\begin{lem}\label{lem:1}
Let $A$ be a vector bundle on a compact complex manifold.
Assume that $A$ is an extension of a trivial vector bundle by a trivial vector bundle.
Then, any Hermitian flat subbundle $F \subset A$ is a trivial vector bundle.
\end{lem}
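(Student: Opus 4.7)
The plan is to exploit the correspondence between Hermitian flat bundles and unitary representations of $\pi_{1}(X)$ to reduce the claim to a vanishing of certain $\mathrm{Hom}$ groups into trivial bundles. Since $F$ is Hermitian flat, it corresponds to a unitary representation $\rho : \pi_{1}(X) \to U(\rk F)$. By complete reducibility of unitary representations, $\rho$ decomposes as $\rho = \rho_{0} \oplus \rho'$, where $\rho_{0}$ is trivial and $\rho'$ has no nonzero $\pi_{1}(X)$-invariant vectors. This yields a holomorphic and orthogonal splitting $F = F_{0} \oplus F'$ into Hermitian flat subbundles, where $F_{0}$ is already trivial. Hence it suffices to show $F' = 0$.

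The key ingredient is the vanishing $H^{0}(F'^{*}) = 0$. The bundle $F'^{*}$ is Hermitian flat, corresponding to the dual representation $(\rho')^{*}$, which also has no nonzero invariants. For a holomorphic section $s$ of $F'^{*}$, the Bochner-type identity
\[
i \partial \bar{\partial} \|s\|_{h}^{2} = \|D' s\|_{h}^{2} - \langle \Theta_{h}\, s, s \rangle_{h},
\]
combined with the vanishing of the Chern curvature $\Theta_{h}$ of the flat unitary metric $h$, shows that $\|s\|_{h}^{2}$ is plurisubharmonic, hence constant by the maximum principle on the compact manifold $X$. Then $D' s = 0$ pointwise, so $s$ is parallel and corresponds to a $\pi_{1}(X)$-invariant vector in the fiber, which must be zero by construction of $\rho'$. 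Consequently, $\mathrm{Hom}(F', \mathcal{O}_{X}^{k}) = H^{0}(F'^{*})^{\oplus k} = 0$ for every $k \geq 0$.

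Now I would compose the inclusion $F' \hookrightarrow F \hookrightarrow A$ with the quotient $A \twoheadrightarrow T_{b} \cong \mathcal{O}_{X}^{b}$. The resulting morphism lies in $\mathrm{Hom}(F', T_{b}) = 0$, so it vanishes identically, which forces $F' \subseteq T_{a}$ as subsheaves of $A$. However, the induced inclusion $F' \hookrightarrow T_{a} \cong \mathcal{O}_{X}^{a}$ also lies in $\mathrm{Hom}(F', T_{a}) = 0$, which is impossible unless $F' = 0$. Therefore $F = F_{0}$ is trivial. The main obstacle to this strategy is ensuring that both the semisimple decomposition and the identification of holomorphic sections with parallel sections hold on an arbitrary compact complex (not necessarily K\"ahler) manifold; the decomposition follows from the associated-bundle construction applied to the unitary representation, while the section-vanishing requires only the Bochner identity (a pointwise formula) together with the maximum principle for plurisubharmonic functions on compact manifolds, so no genuine K\"ahler hypothesis is needed.
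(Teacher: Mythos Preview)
Your proof is correct and takes a genuinely different route from the paper's argument.

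The paper proceeds by diagram chasing with numerically flat bundles: starting from the $3\times 3$ diagram built out of $0\to T_a\to A\to T_b\to 0$ and $0\to F\to A\to A/F\to 0$, it shows that $A/F$ is numerically flat, then that the globally generated quotients $T_a/i^{-1}(F)$ and $T_b/p(F)$ are trivial (using that globally generated numerically flat bundles are trivial, \cite[Proposition 1.16]{DPS94}). This forces $i^{-1}(F)$ and $p(F)$ to be trivial, and the Hermitian flatness of $F$ is invoked only at the very end to split the extension $0\to i^{-1}(F)\to F\to p(F)\to 0$.

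Your argument instead front-loads the Hermitian flatness: you pass immediately to the unitary representation, split off the trivial summand $F_0$, and then use the Bochner identity plus the maximum principle to kill $\mathrm{Hom}(F',\mathcal{O}_X)$. The extension structure of $A$ is used only in the final two lines to trap $F'$ inside $T_a$ and conclude $F'=0$. This is more direct and avoids the DPS94 machinery on numerically flat bundles entirely; it also makes the validity on an arbitrary compact complex manifold transparent, since the Bochner identity and the maximum principle for plurisubharmonic functions need no K\"ahler hypothesis, whereas the paper's use of ``nef'' and ``numerically flat'' sits most naturally in the K\"ahler framework (harmless in the paper, since the lemma is only applied on abelian fibers $X_y$). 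The paper's approach, on the other hand, stays within the same circle of ideas used throughout Section~\ref{sec-fujita} and yields the auxiliary fact that $A/F$ is again an extension of trivial bundles, which is used in the subsequent induction on the filtration $\{F_k\}$.
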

\begin{proof}
By assumption, we have the exact sequence:
$$0 \xrightarrow{\quad \quad} T_{a} \xrightarrow{\quad i \quad} A \xrightarrow{\quad p \quad} T_{b} \xrightarrow{\quad \quad} 0,$$
where $T_{k}$ denotes the trivial vector bundle of rank $k$.
This implies that $A$ is numerically flat.
This sequence and $F \subset A$ lead to
the following commutative diagram:
\begin{align*}
\xymatrix{
&0&0&0&&\\
0 \ar[r]& p (F) \ar[u]\ar[r] & T_{b}  \ar[r] \ar[u] 
& T_{b}/p(F)\ar[r] \ar[u]  &0 &\\
0 \ar[r] & F  \ar[u] \ar[r] & A  \ar_{p}[u] \ar[r]& A/F \ar[u] \ar[r] 
&0&\\
0 \ar[r]& i^{-1}(F)   \ar[u] \ar[r]
&T_{a} \ar_{i}[u] \ar[r] &T_{a} /  i^{-1}(F) \ar[u] \ar[r]& 0 \\
& 0\ar[u]&0\ar[u]&0.\ar[u]&&
}
\end{align*}
Considering the middle row,
the quotient bundle $A/F$ is nef and $c_{1}(A/F)=c_{1}(A)-c_{1}(F)=0$,
and thus $A/F$ is numerically flat.
This implies that the dual bundle $(T_{a} /  i^{-1}(F))^{*} $ is nef.
On the other hand, the quotient $T_{a} /  i^{-1}(F) $ is globally generated from the bottom column, 
and thus $T_{a} /  i^{-1}(F) $ is a trivial vector bundle.
The quotient $T_{b}/p(F)$ of $A/F$ is nef and satisfies $c_{1}(T_{b}/p(F)) = c_{1}(A/F) - c_{1}(T_{a} /  i^{-1}(F))=0$.
Hence, we see that $T_{b}/p(F)$ is numerically  and globally generated from the top column.
Therefore $T_{b}/p(F)$ is also a trivial vector bundle.
Then, we can conclude that the rows in the top and bottom split,
and thus $p(F)$ and $i^{-1}(F) $ are trivial vector bundles.
The left column splits since $F$ is Hermitian flat by assumption.
This leads to the desired conclusion.
\end{proof}

We take a filtration of the numerically flat vector bundle $H:=\mathcal{H}|_{X_{y}}$ by subbundles $\{F_k\}_{k=0}^{p}$, i.e.,
$$0=:F_0 \subset F_1 \subset \dots \subset F_{p-1} \subset F_p:=H,$$
such that each quotient bundle $F_k/F_{k-1}$ is a Hermitian flat vector bundle.
By construction,
the vector bundle $F_{k}$ is inductively obtained as follows:
$F_{k}$ is the inverse image of $\overline{F}_{k}$
by $H \to H/F_{k-1}$,
where $\overline{F}_{k}$ is a subbundle of $H/F_{k-1}$ of the minimal rank
that satisfies $c_{1}(\overline{F}_{k}) \cdot c_{1}(A)^{\dim X -1} \geq 0$.

The vector bundle $H=\mathcal{H}|_{X_{y}}$ satisfies the assumption of Lemma \ref{lem:1},
and $F_{1}$ is Hermitian flat, and thus $F_{1}$ is a trivial vector bundle.
This implies that $F_{1}$ is a trivial line bundle since $F_{1}$ has the minimal rank.
The quotient $H/F_{1}$
also satisfies the assumption of Lemma \ref{lem:1}
by the same argument as in Lemma \ref{lem:1}.
Hence, it follows that $\overline{F}_{2}$ is a trivial line bundle
since $\overline{F}_{2}$ is Hermitian flat and has the minimal rank.
Therefore, we can conclude that $p=\rk H$ and
$F_k/F_{k-1}$ is a trivial line bundle.

We confirm Theorem \ref{thm-flat} (1); i.e., $\dim Z = \rk H - l$ holds.
Here $l$ is the maximal integer such that $F_{l}$ is Hermitian flat.
Let us consider
$$
0 \to F_{k-1} \to F_{k} \to F_{k}/F_{k-1}  \to 0
$$
for every $k  \in \mathbb{Z}_{+}$.
Then, by $F_{k}/F_{k-1}  = T_{1}$,
the above sequence splits
if and only if
$h^{0}(X_{y}, F_{k} )=h^{0}(X_{y}, F_{k-1} ) + 1$.
This implies that $h^{0}(X_{y},H )=l$.
Theorem \ref{thm-flat} (1) follows from \eqref{key}.

We finally confirm Theorem \ref{thm-flat} (2).
Let $p: \mathbb{P}(\Omega_{X}) \to X$ be the projective space bundle over $X$.
Then, we have
$$
p_{*}\big( K_{\mathbb{P}(\Omega_{X}) /X} +
\mathcal{O}_{\mathbb{P}(\Omega_{X})}(n+1) \big)
=\Omega_{X} \otimes \det \Omega_{X}=\Omega_{X}\otimes K_{X}.
$$
By assumption, the hyperplane bundle $\mathcal{O}_{\mathbb{P}(\Omega_{X})}(1)$ is semipositive,
and thus, we can conclude that $\Omega_{X} \otimes K_{X}$ is Griffiths semipositive
by applying the theory of positivity of direct images (see \cite{PT, HPS, BP08}).
The restriction $K_{X}|_{X_{y}}$ is numerically trivial since $f:X \to Y$ is the Iitaka fibration.
This implies that $\Omega_{X}|_{X_{y}}$ is Griffiths semipositive
and that $H=\mathcal{H}|_{X_{y}}$ is Hermitian flat.
Then, Theorem \ref{thm-flat} (2) follows from (1).
\end{proof}

\section{Generically Nef Vector Bundles with Vanishing Second Chern Class}
\label{Sec5}

This section describes the proof of Theorem \ref{generically_nef_reflexive}.

\subsection{Case of smooth projective varieties}
\label{Chapter_projective}

We first begin with the following lemma.
This lemma follows from the proof of \cite[Proposition 4.6]{Nakayama} or \cite[Proposition 13]{Wu20},
but we give a proof for reader's convenience.
\begin{lem}\label{Technical_Lemma}
Let $X$ be a complex manifold and let
$$0 \rightarrow \mathcal{F} \rightarrow \mathcal{E} \rightarrow \mathcal{G} \rightarrow 0$$
be an exact sequence of torsion-free sheaves on $X$.
Assume that $\mathcal{F}$ is locally free, $ \mathcal{E}$ is reflexive, $\mathcal{G}^{**}$ is locally free, and $\codim (\Supp(\mathcal{G}^{**} / \mathcal{G})) \ge 3$.
Then, both $\mathcal{E}$ and $\mathcal{G}$ are locally free on $X$. 
\end{lem}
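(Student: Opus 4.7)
The plan is to reduce local freeness of $\mathcal{E}$ and $\mathcal{G}$ to the vanishing of $\mathcal{E}xt^{1}(\mathcal{G}, \mathcal{F})$, and then to use the reflexivity of $\mathcal{E}$ to transfer local freeness from $\mathcal{F}$ to $\mathcal{G}$. Setting $\mathcal{Q} := \mathcal{G}^{**}/\mathcal{G}$, the codimension hypothesis $\codim \Supp(\mathcal{Q}) \geq 3$ is consumed in the step $\mathcal{E}xt^{i}(\mathcal{Q}, \mathcal{F}) = 0$ for $i \leq 2$, which I would verify stalkwise: on the regular local ring $\mathcal{O}_{X,p}$, the free module $\mathcal{F}_{p}$ has grade on $\mathrm{ann}(\mathcal{Q}_{p})$ equal to the height of that ideal, which is at least $3$ when $\mathcal{Q}_{p}\neq 0$, so the standard depth-versus-$\mathrm{Ext}$ bound (Rees' theorem) forces $\mathrm{Ext}^{i}_{\mathcal{O}_{X,p}}(\mathcal{Q}_{p}, \mathcal{F}_{p}) = 0$ for $i < 3$.

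I would then apply $\Hom(-, \mathcal{F})$ to the tautological sequence $0 \to \mathcal{G} \to \mathcal{G}^{**} \to \mathcal{Q} \to 0$. Local freeness of $\mathcal{G}^{**}$ kills $\mathcal{E}xt^{i}(\mathcal{G}^{**}, \mathcal{F})$ for all $i \geq 1$, so the long exact sequence collapses to $\mathcal{E}xt^{1}(\mathcal{G}, \mathcal{F}) \cong \mathcal{E}xt^{2}(\mathcal{Q}, \mathcal{F})$, and the previous step shows this is zero.

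With $\mathcal{E}xt^{1}(\mathcal{G}, \mathcal{F}) = 0$ in hand, the original sequence $0 \to \mathcal{F} \to \mathcal{E} \to \mathcal{G} \to 0$ splits on every stalk, giving $\mathcal{E}_{p} \cong \mathcal{F}_{p} \oplus \mathcal{G}_{p}$ at each $p \in X$. Reflexivity of $\mathcal{E}$ makes $\mathcal{E}_{p}$ reflexive, and since a direct summand of a reflexive module is reflexive, $\mathcal{G}_{p}$ is reflexive; thus $\mathcal{G}_{p} = (\mathcal{G}_{p})^{**} = (\mathcal{G}^{**})_{p}$ is free by the hypothesis on $\mathcal{G}^{**}$. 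Hence $\mathcal{G}$ is locally free, and then $\mathcal{E}$ is locally free as an extension of two locally free sheaves. The main technical point is the codimension-$3$ $\mathcal{E}xt$ vanishing, which precisely pins down why the assumption $\codim \Supp(\mathcal{G}^{**}/\mathcal{G}) \geq 3$ (rather than $\geq 2$) is needed; the rest is a formal diagram chase together with the elementary fact that direct summands of reflexive modules are reflexive.
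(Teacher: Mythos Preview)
Your argument is correct and complete. It is, however, genuinely different from the paper's proof. The paper works globally: it restricts to the open set $X_0$ where $\mathcal{G}=\mathcal{G}^{**}$ (so $\codim(X\setminus X_0)\ge 3$), observes that $\mathcal{E}|_{X_0}$ is a vector-bundle extension of $\mathcal{G}^{**}|_{X_0}$ by $\mathcal{F}|_{X_0}$ classified by a class in $H^1(X_0,\mathcal{F}\otimes(\mathcal{G}^{**})^{*})$, invokes a Hartogs-type isomorphism $H^1(X_0,\,\cdot\,)\cong H^1(X,\,\cdot\,)$ across a codimension~$\ge 3$ complement (citing \cite[Lemma 4]{Wu20}) to extend the class, and thereby produces a locally free extension $\mathcal{E}'$ of $\mathcal{G}^{**}$ by $\mathcal{F}$ on all of $X$ with $\mathcal{E}'|_{X_0}\cong\mathcal{E}|_{X_0}$; reflexivity then forces $\mathcal{E}'=\mathcal{E}$.

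Your route is purely local commutative algebra: Rees' theorem turns the codimension hypothesis into $\mathcal{E}xt^{\le 2}(\mathcal{Q},\mathcal{F})=0$, which via the long exact sequence kills $\mathcal{E}xt^{1}(\mathcal{G},\mathcal{F})$, so the original sequence splits on stalks and reflexivity of $\mathcal{E}_p$ descends to $\mathcal{G}_p$. This is more self-contained---no external cohomology-extension lemma is needed---and makes transparent exactly why codimension~$3$ (rather than~$2$) is the threshold: it is the depth bound required to kill $\mathrm{Ext}^2$. The paper's approach, by contrast, is more in the spirit of ``reflexive sheaves are determined in codimension two'' and packages the same vanishing into a single global $H^1$ computation. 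Both arguments use the reflexivity of $\mathcal{E}$ as the hinge, but at different stages: you use it after the local splitting to identify $\mathcal{G}$ with $\mathcal{G}^{**}$, whereas the paper uses it at the end to identify $\mathcal{E}$ with the globally constructed $\mathcal{E}'$.
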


\begin{proof}
Take a Zariski open set $X_0 \subset X$ with $\codim(X \setminus X_0) \ge 3$ such that
$\mathcal{G} |_{X_0} \cong \mathcal{G}^{**} |_{X_0} $ and $\mathcal{E}|_{X_0}$ are locally free.
Let us consider the following exact sequence:
\begin{equation}
\label{exact_sequence}
0 \rightarrow \mathcal{F} |_{X_0} \rightarrow \mathcal{E} |_{X_0} \rightarrow \mathcal{G}^{**} |_{X_0} \rightarrow 0.
\end{equation}
Since $\mathcal{G}^{**} |_{X_0} $ is locally free,
we have ${\rm Tor}^{1}_{\mathcal{O}_{X,x}}(\mathcal{O}_{X,x}/\mathfrak{m}_{x}, \mathcal{G}^{**}_{x})=0$ for any $x \in X_0$,
where $\mathfrak{m}_{x}$ is the maximal ideal of $\mathcal{O}_{X,x}$.
Hence, the sequence (\ref{exact_sequence}) is exact as a sequence of vector bundles,
and thus $\mathcal{E} |_{X_0}$ is determined by an extension class in
$H^1(X_0, \mathcal{F} |_{X_0} \otimes (\mathcal{G}^{**} |_{X_0}) ^{*} )$.
By $\codim(X \setminus X_0) \ge 3$ and \cite[Lemma 4]{Wu20}, we have
$$
H^1(X_0, \mathcal{F} |_{X_0} \otimes (\mathcal{G}^{**} |_{X_0}) ^{*} )
\cong
H^1(X, \mathcal{F}  \otimes \mathcal{G}^{*} ).
$$
This implies that $\mathcal{E} |_{X_0}$ can be extended to a vector bundle $\mathcal{E}'$ on $X$.
We obtain $\mathcal{E} = \mathcal{E}'$ by reflexivity.
\end{proof}

We prove Theorem \ref{generically_nef_reflexive} (2)
by using Theorems \ref{Miyaoka_second_chern}, \ref{Ou_chapter6}.

\begin{thm}[{\cite[Theorem 6.1]{Miyaoka87}}]
\label{Miyaoka_second_chern}
Let $X$ be a smooth projective variety of dimension $n \ge 2$, let $\mathcal{E}$ be
a torsion-free coherent sheaf, and let $A_1, \ldots, A_{n-2}$ be ample line bundles on $X$.
Assume that $c_1(\mathcal{E})$ is nef and
$\mu_{A_1\cdots A_{n-2}L}^{\min}(\mathcal{E}) \ge 0$ holds
for any nef line bundle $L$.
Then, we have
$$
c_2(\mathcal{E})A_1 \cdots A_{n-2} \ge 0.
$$
\end{thm}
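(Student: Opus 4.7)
The plan is to follow Miyaoka's original strategy: reduce to the case $n = 2$ via a Mehta-Ramanathan restriction to a general complete intersection surface, then apply the Bogomolov-Gieseker inequality through the Harder-Narasimhan filtration. First I would pick sufficiently divisible $m_i \gg 0$ and very general members $D_i \in |m_i A_i|$ so that $S := D_1 \cap \cdots \cap D_{n-2}$ is a smooth surface avoiding the singular locus of $\mathcal{E}$. Then $\mathcal{E}|_S$ is a torsion-free sheaf on $S$ with $c_1(\mathcal{E}|_S)$ nef and
$$
c_2(\mathcal{E}|_S) = \Big(\prod_{i=1}^{n-2} m_i\Big) \cdot c_2(\mathcal{E})\, A_1 \cdots A_{n-2},
$$
so it suffices to show $c_2(\mathcal{E}|_S) \geq 0$.

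On $S$, set $H := A_1 \cdots A_{n-2}|_S$, which is ample for $m_i \gg 0$, and take the Harder-Narasimhan filtration of $\mathcal{E}|_S$ with respect to $H$, with graded pieces $\mathcal{G}_i$ of rank $r_i$ and slope $\mu_i$. By a tuple version of Mehta-Ramanathan, this HN filtration on $S$ coincides with the restriction of the HN filtration of $\mathcal{E}$ on $X$ with respect to $(A_1, \ldots, A_{n-2}, A)$ for a further ample $A$. The hypothesis $\mu^{\min}_{A_1 \cdots A_{n-2} L}(\mathcal{E}) \geq 0$ for every nef $L$ on $X$ then implies $\mu_i \geq 0$ for all $i$, together with the finer statement $c_1(\mathcal{G}_i) \cdot L|_S \geq 0$ for every nef $L$ on $X$.

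The remaining argument is carried out on $S$. Each $\mathcal{G}_i$ is $H$-semistable, so the Bogomolov-Gieseker inequality gives $2 r_i c_2(\mathcal{G}_i) \geq (r_i - 1) c_1(\mathcal{G}_i)^2$. Combining this with the Whitney-type identity for the $c_2$ of a filtered sheaf and the expansion $c_1(\mathcal{E}|_S)^2 = \sum_i c_1(\mathcal{G}_i)^2 + 2\sum_{i<j} c_1(\mathcal{G}_i) c_1(\mathcal{G}_j)$ yields
$$
2 c_2(\mathcal{E}|_S) \geq c_1(\mathcal{E}|_S)^2 - \sum_{i=1}^{l} \frac{1}{r_i} c_1(\mathcal{G}_i)^2.
$$
Since $c_1(\mathcal{E}|_S)$ is nef we have $c_1(\mathcal{E}|_S)^2 \geq 0$, and to absorb the negative sum on the right I would write $c_1(\mathcal{G}_i) = a_i H + D_i$ with $D_i \cdot H = 0$, using the Hodge index theorem to bound $D_i^2 \leq 0$, and then exploit the decreasing-slope property of the HN filtration together with the ``all nef $L$'' condition to control the remaining cross terms.

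The main obstacle is precisely this final Hodge-index estimate: the inequality $c_1(\mathcal{E}|_S)^2 \geq \sum_i c_1(\mathcal{G}_i)^2 / r_i$ is not a formal consequence of Bogomolov, Hodge index, and nefness of $c_1(\mathcal{E}|_S)$ alone, and requires the full strength of $\mu^{\min}_{A_1 \cdots A_{n-2} L}(\mathcal{E}) \geq 0$ for every nef $L$, used to pin down intersections $c_1(\mathcal{G}_i) \cdot L|_S$ for a sufficiently rich family of nef classes on $S$. A secondary technical point is the tuple-polarization Mehta-Ramanathan step, which requires choosing the $D_i$ to be simultaneously general with respect to all relevant HN filtrations.
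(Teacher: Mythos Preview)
The paper does not give its own proof of this statement: Theorem~\ref{Miyaoka_second_chern} is quoted verbatim from \cite[Theorem 6.1]{Miyaoka87} and used as a black box. So there is no proof in the paper to compare against for this particular result.

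That said, the paper does prove a closely related K\"ahler analogue, Theorem~\ref{Cao13_revisit}, in the appendix, and comparing your outline to that argument is instructive because your proposal has a genuine gap at exactly the point you flag. In the appendix the Harder--Narasimhan filtration is taken with respect to the specific polarization $\alpha_\varepsilon^{n-1}$ with $\alpha_\varepsilon = c_1(\mathcal{E}) + \varepsilon\{\omega\}$, \emph{not} an arbitrary ample class. This choice is what makes the final estimate go through: the Hodge-index form $q(\eta_1,\eta_2)=\eta_1\eta_2\,\alpha_\varepsilon^{n-2}$ (Lemma~\ref{linear_algebra}) yields
\[
c_1(\mathcal{G}_i)^2\,\alpha_\varepsilon^{n-2}\ \le\ \frac{\bigl(c_1(\mathcal{G}_i)\,c_1(\mathcal{E})\,\alpha_\varepsilon^{n-2}\bigr)^2}{c_1(\mathcal{E})^2\,\alpha_\varepsilon^{n-2}},
\]
and the cross terms $c_1(\mathcal{G}_i)\cdot c_1(\mathcal{E})$ are then controlled by the slope inequalities for \emph{this} filtration together with a case analysis on $\nu(c_1(\mathcal{E}))$. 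Your filtration with respect to an unrelated ample $H$ gives no direct handle on $c_1(\mathcal{G}_i)\cdot c_1(\mathcal{E})$, which is why your final step stalls. Miyaoka's original argument on the surface uses the same trick: polarize by (a perturbation of) $c_1(\mathcal{E})|_S$.

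Two smaller issues. First, the definition ``$H := A_1\cdots A_{n-2}|_S$'' is not an ample class on $S$ once $n\ge 4$: the product $A_1\cdots A_{n-2}$ has codimension $n-2$, so its restriction to the surface $S$ is a top-degree class, not a curve class. You need to pick a single ample divisor on $S$, e.g.\ $A_1|_S$ or, better, $c_1(\mathcal{E})|_S + \varepsilon A_1|_S$. Second, the appendix avoids the restriction-to-a-surface step entirely by working with the quadratic form $q$ on $H^{1,1}(X,\mathbb{R})$ directly; this sidesteps your ``tuple Mehta--Ramanathan'' concern and is arguably cleaner.
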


\begin{thm}[{\cite[Chapter 6]{Ou17}}]
\label{Ou_chapter6}
Let $X$ be a smooth projective variety of dimension $n \ge 2$, let $\mathcal{E}$ be a torsion-free sheaf,
and let $A$ be an ample line bundle.
Assume that $c_1(\mathcal{E})$ is nef, $\mathcal{E}$ is generically nef, and $c_2(\mathcal{E})A^{n-2}  = 0$.
Then, we have:

\begin{itemize}
\item[(1)] In the case of $\nu(c_1(\mathcal{E}))\ge 2$,
there exists a line bundle $\mathcal{E}_1 \subset \mathcal{E}$ such that $\mathcal{E}/\mathcal{E}_{1}$ is torsion-free,
$c_1(\mathcal{E}_1)=c_1(\mathcal{E})$, $c_1(\mathcal{E}/\mathcal{E}_1)=0$, and $c_2(\mathcal{E}/\mathcal{E}_1) A^{n-2}=0$.

\item[(2)] In the case of $\nu(c_1(\mathcal{E})) =1$,
there exists a filtration of subsheaves:
$$
0 =: \mathcal{E}_0 \subset \mathcal{E}_1 \subset \cdots \subset \mathcal{E}_l:=\mathcal{E}
$$
satisfying the following conditions:
\begin{itemize}
\renewcommand{\theenumi}{\alph{enumi}}
\item[$\bullet$] The above filtration is the $(c_1(\mathcal{E}) +  \varepsilon  A)^{n-1}$-Harder-Narasimhan filtration for any $0< \varepsilon  \ll 1$.
\item[$\bullet$] $c_1(\mathcal{E}_k /\mathcal{E}_{k-1})$ is numerically proportional to $c_1(\mathcal{E})$ for any $k = 1, \dots, l$.
\item[$\bullet$] $c_2(\mathcal{E}_k /\mathcal{E}_{k-1})(c_1(\mathcal{E}) +  \varepsilon  A)^{n-2}=0$
for any $k = 1, \dots, l$ and any $0< \varepsilon  \ll 1$.
\end{itemize}

\item[(3)] In the case of $\nu(c_1(\mathcal{E})) =0$,
the reflexive hull $\mathcal{E}^{**}$ is a numerically flat vector bundle.

\end{itemize}
\end{thm}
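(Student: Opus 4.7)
The plan is to analyze $\mathcal{E}$ via its Harder--Narasimhan filtration with respect to the family of nef polarizations $H_{\varepsilon} := c_1(\mathcal{E}) + \varepsilon A$ for $\varepsilon \to 0^{+}$, and to exploit the equality case of the Bogomolov--Gieseker discriminant inequality. The key input is Miyaoka's generic semipositivity (Theorem \ref{Miyaoka_second_chern}), applied piecewise to the graded factors, combined with the additive decomposition
\begin{align*}
c_2(\mathcal{E}) H_\varepsilon^{n-2} = \sum_{k} c_2(\mathcal{E}_k/\mathcal{E}_{k-1}) H_\varepsilon^{n-2} + \sum_{k<l} c_1(\mathcal{E}_k/\mathcal{E}_{k-1}) \cdot c_1(\mathcal{E}_l/\mathcal{E}_{l-1}) \cdot H_\varepsilon^{n-2},
\end{align*}
in which every term is nonnegative under the hypotheses (the quotient first Chern classes being pseudoeffective). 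The assumption $c_2(\mathcal{E}) A^{n-2} = 0$ then forces termwise vanishing after letting $\varepsilon \to 0$, which in turn forces each graded piece to saturate Bogomolov--Gieseker and hence, by Theorem-Definition \ref{numerically_projectively_flat}, to be projectively numerically flat.

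First I would dispose of case (3), where $\nu(c_1(\mathcal{E})) = 0$ forces $c_1(\mathcal{E}) \equiv 0$; then generic nefness together with $c_1(\mathcal{E}) = 0$ makes $\mathcal{E}$ semistable with respect to $A^{n-1}$, the discriminant inequality collapses to $c_2(\mathcal{E}) A^{n-2} \geq 0$, and the hypothesis saturates equality, so Theorem-Definition \ref{numerically_projectively_flat} applied to $\mathcal{E}^{**}$ yields numerical flatness. For case (2) with $\nu(c_1(\mathcal{E})) = 1$, the same analysis applied to the $H_\varepsilon^{n-1}$-Harder--Narasimhan filtration (which stabilizes as $\varepsilon \to 0$, since finitely many candidate subsheaves can be distinguished) produces graded pieces that are projectively numerically flat; numerical proportionality of $c_1(\mathcal{E}_k/\mathcal{E}_{k-1})$ to $c_1(\mathcal{E})$ is then forced because, when $\nu(c_1(\mathcal{E})) = 1$, pseudoeffective classes with vanishing intersection against $c_1(\mathcal{E}) \cdot H_\varepsilon^{n-2}$ must lie in the ray spanned by $c_1(\mathcal{E})$, and the cross-term in the decomposition above vanishes individually by positivity.

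The main obstacle is case (1), $\nu(c_1(\mathcal{E})) \geq 2$, where one must exhibit a genuine rank-one subsheaf $\mathcal{E}_1 \subset \mathcal{E}$ with $c_1(\mathcal{E}_1) = c_1(\mathcal{E})$. My plan is to take the first piece of the $H_\varepsilon^{n-1}$-Harder--Narasimhan filtration and argue that its rank drops to one precisely when $\nu(c_1(\mathcal{E})) \geq 2$. The heart of the argument is a Bogomolov--Sommese style statement: a rank-$\geq 2$ projectively numerically flat subsheaf whose determinant is numerically equivalent to $c_1(\mathcal{E})$ would, via restriction to a complete-intersection surface and the structural characterization of projectively numerically flat sheaves (Theorem-Definition \ref{numerically_projectively_flat}(3)), produce a filtration by projectively Hermitian flat pieces whose combined sections would contradict $\nu(c_1(\mathcal{E})) \geq 2$. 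Once $\rk \mathcal{E}_1 = 1$ and $c_1(\mathcal{E}_1) = c_1(\mathcal{E})$ are established, the quotient $\mathcal{E}/\mathcal{E}_1$ has vanishing first Chern class and, by the additive decomposition, satisfies $c_2(\mathcal{E}/\mathcal{E}_1) A^{n-2} = 0$. Torsion-freeness of $\mathcal{E}/\mathcal{E}_1$ is then secured by passing to the saturation of $\mathcal{E}_1$, and Lemma \ref{Technical_Lemma} ensures that this codimension-$\geq 2$ modification does not alter the intersection numbers entering the conclusion.
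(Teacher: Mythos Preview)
The paper does not prove this statement directly (it is quoted from \cite{Ou17}), but it proves the K\"ahler analogue, Theorem \ref{Cao13_revisit}, in the appendix with an argument that works verbatim in the projective setting. Comparing your proposal to that proof, there is a genuine gap.

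Your key claim is that in the decomposition
\[
c_2(\mathcal{E}) H_\varepsilon^{n-2} \;=\; \sum_k c_2(\mathcal{G}_k) H_\varepsilon^{n-2} \;+\; \sum_{k<l} c_1(\mathcal{G}_k)\cdot c_1(\mathcal{G}_l)\cdot H_\varepsilon^{n-2}
\]
every term is nonnegative, because ``the quotient first Chern classes are pseudoeffective.'' This is not justified and in general false. Generic nefness of $\mathcal{E}$ only yields $\mu_{H_\varepsilon^{n-1}}(\mathcal{G}_i)\ge 0$, i.e.\ $c_1(\mathcal{G}_i)\cdot H_\varepsilon^{n-1}\ge 0$; it does \emph{not} make the intermediate graded pieces $c_1(\mathcal{G}_i)$ pseudoeffective (only $\mathcal{G}_l$ is an honest quotient of $\mathcal{E}$). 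Even if they were, a product of two pseudoeffective classes against a nef $(n-2)$-class need not be nonnegative: already on a surface a $(-1)$-curve has negative self-intersection. So the ``termwise nonnegativity $\Rightarrow$ termwise vanishing'' step collapses, and with it your route to showing that each $\mathcal{G}_i$ saturates Bogomolov--Gieseker.

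The paper's argument replaces this by the Hodge index theorem. One uses the signature $(1,h^{1,1}-1)$ of the form $q(\eta_1,\eta_2)=\eta_1\eta_2 H_\varepsilon^{n-2}$ (Lemma \ref{linear_algebra}) to bound
\[
c_1(\mathcal{G}_i)^2 H_\varepsilon^{n-2}\ \le\ \frac{\bigl(c_1(\mathcal{G}_i)\,c_1(\mathcal{E})\,H_\varepsilon^{n-2}\bigr)^2}{c_1(\mathcal{E})^2 H_\varepsilon^{n-2}},
\]
and then performs a careful asymptotic analysis of the leading $\varepsilon$-coefficients. In case $\nu\ge 2$ this analysis (Claim \ref{nu2_inequality}) forces $r_1=1$ and $a_i=0$ for $i\ge 2$ directly, after which Lemma \ref{linear_algebra}(2) gives $c_1(\mathcal{E}/\mathcal{E}_1)=0$. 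Your proposed detour through a ``Bogomolov--Sommese style'' argument presupposes $c_1(\mathcal{E}_1)=c_1(\mathcal{E})$ before $r_1=1$ has been established, and it is not clear how the filtration of a projectively numerically flat sheaf would produce sections contradicting $\nu\ge 2$. In case $\nu=1$, the proportionality of $c_1(\mathcal{G}_i)$ to $c_1(\mathcal{E})$ again comes from the equality case of Hodge index (Lemma \ref{linear_algebra}(3)), not from any pseudoeffectivity of $c_1(\mathcal{G}_i)$.
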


\begin{rem}
We should pay attention to the choice of polarizations 
when considering generically nef sheaves in Theorems \ref{Ou_chapter6} and  \ref{Cao13_revisit}. 
For example, we assume that $\mathcal{E}$ is generically nef (with respect to any ample polarizations) 
in Theorem \ref{Ou_chapter6}, and thus $\mathcal{E}$ is generically 
$(H_{1}, \dots, H_{n-2})$-semipositive by Theorem \ref{Miyaoka_second_chern}, 
which was an assumption in \cite[Proposition 6.1]{Ou17}. 
On the other hand, in Theorem \ref{Cao13_revisit}, we only assume  that 
$\mathcal{E}$ is $\{ \eta  \}^{n-1}$-generically nef for a K\"ahler form $\eta$, 
but we consider only the polarization of the form on $(c_{1}(\mathcal{E}) + \varepsilon \{\omega\})$ 
in  Theorems \ref{Cao13_revisit} or  \ref{generically_nef_reflexive} (1).  
See Subsection \ref{subsec-rela} for related open problems. 

\end{rem}

\begin{proof}[Proof of Theorem \ref{generically_nef_reflexive} $(2)$]
We first remark that $F$ is nef 
if a vector bundle $F$ is projectively numerically flat and $c_{1}(F)$ is nef. 
This because $F \langle \frac{\det F^{*}}{\rk F}\rangle $ is nef. 
The proof is divided by the cases according to $\nu(c_1(\mathcal{E}))$.
\\
\textit{Case of $\nu(c_1(\mathcal{E}))\ge 2$.}
By Theorem \ref{Ou_chapter6}, there exists a nef line bundle $\mathcal{E}_1$ such that
$\mathcal{E}_2 :=\mathcal{E}/\mathcal{E}_1$ is torsion-free, $c_1(\mathcal{E}_2)=0$, and $c_2(\mathcal{E}_2) A^{n-2}=0$. %
Then, from Theorem \ref{Miyaoka_second_chern},
we obtain  $c_2(\mathcal{E}_{2}^{**})A^{n-2}\ge 0$
since $\mathcal{E}_{2}^{**}$ is generically nef.
This implies that
$$
0 = c_2(\mathcal{E}_{2}) A^{n-2} \ge c_2(\mathcal{E}_{2}^{**})A^{n-2}\ge 0,
$$
and thus $c_2(\mathcal{E}_{2}^{**})A^{n-2}= 0$.
Hence, we see that $\mathcal{E}_{2}^{**}$ is
a projectively numerically flat vector bundle by
Theorem-Definition \ref{numerically_projectively_flat}. 
By \cite[Remark 8]{Cao13} or \cite[Lemma 6.9]{Ou17}, we have $\codim (\Supp(\mathcal{E}_{2}^{**}/\mathcal{E}_2)) \ge 3$.
By Lemma \ref{Technical_Lemma}, the sheaf $\mathcal{E}$ is locally free and satisfies the exact sequence
$$
0 \rightarrow  \mathcal{E}_1 \rightarrow  \mathcal{E} \rightarrow \mathcal{E}_{2}^{**} \rightarrow 0.
$$
This implies that
$$
c_2(\mathcal{E}) = c_1(\mathcal{E}_1)c_1(\mathcal{E}_{2}^{**} ) + c_2(\mathcal{E}_{2}^{**} ) + c_2(\mathcal{E}_{1} ) =0.
$$
\\
\textit{Case of $\nu(c_1(\mathcal{E}))= 1$.}
We use induction on the rank of $\mathcal{E}$.
We show that the subsheaf $\mathcal{E}_1 \subset \mathcal E$
appearing in Theorem \ref{Ou_chapter6} is actually a nef vector bundle.
Note that $\mathcal{E}_1$ is reflexive since $\mathcal{E}_1$ is saturated in $\mathcal{E}$.
The sheaf $\mathcal{E}_1$ is $(c_1(\mathcal{E}) +  \varepsilon  A)^{n-1}$-semistable
and satisfies 
$c_2(\mathcal{E}_1)(c_1(\mathcal{E}) +  \varepsilon  A)^{n-2}=0$.
Furthermore, we have $c_1(\mathcal{E}_1)^2=0$
since $c_1(\mathcal{E}_1)$ is numerically proportional to $c_1(\mathcal{E})$.
Hence, the sheaf $\mathcal{E}_1$ is a projectively numerically flat vector bundle and satisfies
$$c_2(\mathcal{E}_1)=\frac{\rk \mathcal{E}_1 -1}{2\rk \mathcal{E}_1}c_1(\mathcal{E}_1)^2=0.$$
We conclude that $\mathcal{E}_1$ is a nef vector bundle.

The quotient sheaf $\mathcal{E}_2 :=\mathcal{E}/\mathcal{E}_1$ is a generically nef sheaf
that satisfies that $c_1(\mathcal{E}_2)$ is nef,
$c_1(\mathcal{E}_2)^2=0$, and $c_2(\mathcal{E}_2)(c_1(\mathcal{E}) +  \varepsilon  A)^{n-2}=0$.
By the same argument as above, we have $\codim (\Supp(\mathcal{E}_{2}^{**}/\mathcal{E}_2)) \ge 3$.
The induction hypothesis shows that $\mathcal{E}_{2}^{**}$ is also a nef vector bundle
with $c_2(\mathcal{E}_{2}^{**})=0$.

The sheaf $\mathcal{E}$ is locally free by Lemma \ref{Technical_Lemma}.
Furthermore, we have the following exact sequence:
$$
0 \rightarrow  \mathcal{E}_1 \rightarrow  \mathcal{E} \rightarrow \mathcal{E}_{2}^{**} \rightarrow 0.
$$
Therefore, $\mathcal{E}$ is nef since both $\mathcal{E}_1$ and $\mathcal{E}_2$ are nef.
Since both $c_1(\mathcal{E}_1)$ and $c_1(\mathcal{E}_{2}^{**} ) $ are numerically proportional to $c_1(\mathcal{E})$, we have
$$
c_2(\mathcal{E}) = c_2(\mathcal{E}_1) + c_1(\mathcal{E}_1)c_1(\mathcal{E}_{2}^{**} ) + c_2(\mathcal{E}_{2}^{**} ) =0.
$$
\\
\textit{Case of $\nu(c_1(\mathcal{E}))= 0$.}
The sheaf $\mathcal{E} = \mathcal{E}^{**}$ is a numerically flat vector bundle by Theorem \ref{Ou_chapter6},
which finishes the proof.
\end{proof}

\begin{rem}
By the proof of Theorem \ref{generically_nef_reflexive} (2), if $\mathcal{E}$ is reflexive, $c_1(\mathcal{E})$ is nef, if $\mathcal{E}$ is generically nef, and if $c_2(\mathcal{E})A^{n-2}  = 0$, then there exists a filtration of subbundles:
$$
0=:E_0 \subset E_1 \subset \cdots \subset E_l:=\mathcal{E}
$$
such that $E_i / E_{i-1}$ is projectively numerically flat and $c_1 (E_i /E_{i-1})$ is numerically proportional to $c_1(\mathcal{E})$.
In particular, each quotient $E_{i}/E_{i-1}$ is nef, and $E$ is an extension of projectively numerically flat bundles.
\end{rem}

Theorem \ref{generically_nef_reflexive} (2) leads to the following corollary.

\begin{cor}
\label{cotangent_nef_proj}
Let $X$ be a smooth projective variety, and let $A$ be an ample line bundle on $X$.
\begin{enumerate}
\item[$(1)$] If $K_X$ is nef and $c_2(X) A^{n-2} =0$, then $\Omega_{X}$ is nef and $c_2(X)=0$ in $H^{2,2}(X,\R).$
\item[$(2)$] If $-K_X$ is nef and $c_2(X) A^{n-2} =0$, then $T_{X}$ is nef and $c_2(X)=0$ in $H^{2,2}(X,\R)$.
\end{enumerate}
\end{cor}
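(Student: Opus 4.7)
The plan is to deduce both parts of Corollary \ref{cotangent_nef_proj} as direct applications of Theorem \ref{generically_nef_reflexive} (2), setting $\mathcal{E} := \Omega_X$ in case (1) and $\mathcal{E} := T_X$ in case (2). In each case $\mathcal{E}$ is locally free, hence reflexive; its second Chern class coincides with $c_2(X)$, so the hypothesis $c_2(X) A^{n-2} = 0$ immediately becomes $c_2(\mathcal{E}) A^{n-2} = 0$; and $c_1(\mathcal{E})$ equals $K_X$ in case (1) and $-K_X$ in case (2), both nef by assumption.

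The one remaining hypothesis of Theorem \ref{generically_nef_reflexive} (2) to verify is the generic nefness of $\mathcal{E}$. For case (1), I would invoke Miyaoka's classical theorem from \cite{Miyaoka87} (together with \cite{CP11}), already quoted earlier in the paper, which gives that $\Omega_X$ is generically nef whenever $K_X$ is nef. For case (2), the analogous statement that $T_X$ is generically nef when $-K_X$ is nef is due to Cao \cite{Cao13}. With both inputs in place, Theorem \ref{generically_nef_reflexive} (2) concludes simultaneously that $\mathcal{E}$ is a nef vector bundle and that $c_2(\mathcal{E}) = 0$ in $H^{2,2}(X,\mathbb{R})$, which by $c_2(\Omega_X) = c_2(T_X) = c_2(X)$ gives exactly the conclusions of (1) and (2).

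I do not anticipate any genuine obstacle: the corollary is essentially a repackaging of Theorem \ref{generically_nef_reflexive} (2) combined with the standard generic nefness results under (anti-)nefness of the canonical bundle. The only point requiring care is to cite the right source for the generic nefness of $T_X$ in the anti-canonical nef case; everything else is a direct specialization of the general reflexive sheaf statement already established in Section \ref{Chapter_projective}.
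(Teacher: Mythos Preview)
Your proposal is correct and essentially identical to the paper's proof: both parts follow by applying Theorem \ref{generically_nef_reflexive} (2) to $\mathcal{E}=\Omega_X$ and $\mathcal{E}=T_X$ respectively, after invoking the known generic nefness results. The only discrepancy is the citation for case (2): the paper cites \cite[Theorem 1.4]{Ou17} rather than \cite{Cao13}, since Ou's result gives generic nefness of $T_X$ with respect to \emph{arbitrary} ample polarizations $(A_1,\ldots,A_{n-1})$, which is what the hypothesis of Theorem \ref{generically_nef_reflexive} (2) (via Theorems \ref{Miyaoka_second_chern} and \ref{Ou_chapter6}) actually requires, whereas Cao's K\"ahler result only yields $\{\eta\}^{n-1}$-generic nefness for a single class.
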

\begin{proof}
By \cite{Miyaoka87} (cf.\,\cite[Theorem 0.3]{CP11} or \cite[Theorem 1.3]{CP15}),
the cotangent bundle $\Omega_{X}$ is generically nef. Hence, Conclusion (1) follows from Theorem \ref{generically_nef_reflexive}.
By \cite[Theorem 1.4]{Ou17}, the tangent bundle $T_X$ is also generically nef. 
Hence, Conclusion (2) follows from Theorem \ref{generically_nef_reflexive}.
\end{proof}

\subsection{Case of compact K\"ahler manifolds}
Theorems \ref{Miyaoka_second_chern} and \ref{Ou_chapter6}
can be generalized to compact K\"ahler manifolds.
See  Section \ref{section:appendix} for some remarks for the Chern  classes of coherent sheaves 
on compact K\"ahler manifolds. 
The proof is essentially the same as in \cite{Cao13};
hence, we put the proof in Appendix \ref{section:appendix}.
By using Theorem \ref{Cao13_revisit} instead of Theorem \ref{Ou_chapter6},
we can repeat the same argument as in Theorem \ref{generically_nef_reflexive} (2)
for compact K\"ahler manifolds,
which proves Theorem \ref{generically_nef_reflexive} (1).
Hence, we omit the proof.

\begin{thm} $($cf.\,\cite[Proposition 4.4 and Lemma 4.5]{Cao13}$)$
\label{Cao13_revisit}
Let $(X, \omega)$ be a compact K\"ahler manifold of dimension $n \ge 2$, and let $\mathcal{E}$ be a torsion-free coherent sheaf.
Assume that $c_1(\mathcal{E})$ is nef and $\mathcal{E}$ is $\{ \eta  \}^{n-1}$-generically nef for any K\"ahler form $\eta$.
Then, $c_2(\mathcal{E})  (c_1(\mathcal{E}) +  \varepsilon  \{\omega\})^{n-2} \ge0$ holds for any $0 <  \varepsilon  \ll 1.$

Moreover, there exists $ \varepsilon _{0}$ depending on $(X, \omega)$ and $\mathcal{E}$
such that
if $c_2(\mathcal{E}) (c_1(\mathcal{E}) +  \varepsilon  \{\omega\})^{n-2} =0$ for some $ \varepsilon _{0} >  \varepsilon >0$,
then the following holds:

\begin{itemize}
\item[(1)] In the case of $\nu(c_1(\mathcal{E}))\ge 2$,
there exists a line bundle $\mathcal{E}_1 \subset \mathcal{E}$ such that $\mathcal{E}/\mathcal{E}_{1}$ is torsion-free, $c_1(\mathcal{E}_1)=c_1(\mathcal{E})$, $c_1(\mathcal{E}/\mathcal{E}_1)=0$, and $c_2(\mathcal{E}/\mathcal{E}_1) (c_1(\mathcal{E}) +  \varepsilon  \{\omega\})^{n-2}=0$.

\item[(2)] In the case of $\nu(c_1(\mathcal{E}))= 1$,
there exists a filtration of torsion-free coherent sheaves:
$$
0 = \mathcal{E}_0 \subset \mathcal{E}_1 \subset \cdots \subset \mathcal{E}_l =\mathcal{E}
$$
which satisfies the following conditions.
\begin{itemize}
\renewcommand{\theenumi}{\alph{enumi}}
\item[$\bullet$] This filtration is the $(c_1(\mathcal{E}) +  \varepsilon  \{\omega\})^{n-1}$-Harder-Narasimhan filtration.
\item[$\bullet$] $c_1(\mathcal{E}_k /\mathcal{E}_{k-1})$ is nef and numerically proportional to $c_1(\mathcal{E})$ for any $k = 1, \dots, l$.
\item[$\bullet$] $c_2(\mathcal{E}_k /\mathcal{E}_{k-1}) (c_1(\mathcal{E}) +  \varepsilon  \{\omega\})^{n-2}=0$ for any $k = 1, \dots, l$.
\end{itemize}

\item[(3)] In the case of $\nu(c_1(\mathcal{E}))= 0$,
the reflexive hull $\mathcal{E}^{**}$ is a numerically flat vector bundle.
\end{itemize}
\end{thm}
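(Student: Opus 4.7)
The strategy is to adapt Cao's arguments in \cite{Cao13} and Ou's projective arguments \cite{Ou17} to the K\"ahler setting, using the perturbed polarization $\alpha_\varepsilon := (c_1(\mathcal{E}) + \varepsilon\{\omega\})^{n-1}$, which lies in the interior of the movable cone for $\varepsilon > 0$. The organizing principle is that as $\varepsilon \to 0^+$, one wants the Harder-Narasimhan filtration with respect to $\alpha_\varepsilon$ to stabilize and produce the desired subsheaves, while the Bogomolov-type inequality on each graded piece forces the first Chern classes to be numerically proportional to $c_1(\mathcal{E})$.

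First, I would prove the semipositivity statement $c_2(\mathcal{E})(c_1(\mathcal{E}) + \varepsilon\{\omega\})^{n-2} \geq 0$. Since $\mathcal{E}$ is $\{\eta\}^{n-1}$-generically nef for \emph{every} K\"ahler class $\eta$, in particular $\mu^{\min}_{\alpha_\varepsilon \cdot L}(\mathcal{E}) \geq 0$ for every nef class $L$ (using Lemma \ref{nef_generically_ample} and continuity of slopes in the polarization). This is the exact hypothesis behind Miyaoka's inequality (Theorem \ref{Miyaoka_second_chern}); the K\"ahler version follows by Cao's argument \cite[Proposition 4.4]{Cao13}, which performs Miyaoka's reduction to curves via limits of K\"ahler classes. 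Next, fix $\varepsilon_0$ small enough that the Harder-Narasimhan filtration $0 = \mathcal{E}_0 \subset \mathcal{E}_1 \subset \cdots \subset \mathcal{E}_l = \mathcal{E}$ with respect to $\alpha_\varepsilon$ is independent of $\varepsilon \in (0, \varepsilon_0)$; such $\varepsilon_0$ exists because there are only finitely many saturated subsheaves arising as HN factors as $\varepsilon \to 0^+$ (a standard boundedness argument).

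Assume now $c_2(\mathcal{E})(c_1(\mathcal{E}) + \varepsilon\{\omega\})^{n-2} = 0$. Each graded piece $\mathcal{Q}_k := \mathcal{E}_k/\mathcal{E}_{k-1}$ is $\alpha_\varepsilon$-semistable and, since $\mathcal{E}$ itself is generically nef, each $c_1(\mathcal{Q}_k)$ is pseudoeffective. Expanding $c_2(\mathcal{E}) = \sum_k c_2(\mathcal{Q}_k) + \sum_{i<j} c_1(\mathcal{Q}_i) c_1(\mathcal{Q}_j)$ and applying the Bogomolov-type inequality from the previous paragraph to each $\mathcal{Q}_k$ (each of which inherits the generically nef property from being a subquotient of slopes $\geq 0$), both summands are $\alpha_\varepsilon \cdot \{\omega\}^{-1}$-nonnegative. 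Vanishing then forces $c_2(\mathcal{Q}_k)(c_1(\mathcal{E}) + \varepsilon\{\omega\})^{n-2} = 0$ for every $k$, and $c_1(\mathcal{Q}_i) c_1(\mathcal{Q}_j) (c_1(\mathcal{E}) + \varepsilon\{\omega\})^{n-2} = 0$ for $i \neq j$. Applying a Hodge-index type argument to the semistable sheaves $\mathcal{Q}_k$ (as in Cao \cite[Lemma 4.5]{Cao13}), this intersection-theoretic vanishing forces each $c_1(\mathcal{Q}_k)$ to be numerically proportional to $c_1(\mathcal{E})$.

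With these preparations in place, the three cases follow as in Ou's proof. For $\nu(c_1(\mathcal{E})) \geq 2$, the top HN piece $\mathcal{E}_1$ has $c_1$ proportional to $c_1(\mathcal{E})$ with $\nu(c_1(\mathcal{E}_1)) \geq 2$; combined with $c_2(\mathcal{E}_1) \alpha_\varepsilon / \{\omega\} = 0$ and Hodge-index, one concludes $\mathrm{rk}\,\mathcal{E}_1 = 1$ and $c_1(\mathcal{E}_1) = c_1(\mathcal{E})$, forcing $c_1(\mathcal{E}/\mathcal{E}_1) = 0$. For $\nu(c_1(\mathcal{E})) = 1$, the conclusions are already assembled. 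For $\nu(c_1(\mathcal{E})) = 0$, each $c_1(\mathcal{Q}_k) = 0$ so each $\mathcal{Q}_k$ is a torsion-free $\alpha_\varepsilon$-semistable sheaf with $c_1 = 0$ and $c_2 \cdot \{\omega\}^{n-2}$-vanishing; by Bando-Siu and \cite[Theorem 1.20]{GKP16}, $\mathcal{Q}_k^{**}$ is Hermitian flat, and an extension argument (together with Lemma \ref{Technical_Lemma} applied inductively to the filtration) yields that $\mathcal{E}^{**}$ is numerically flat. The main obstacle is \textbf{the K\"ahler step} where projective tools such as Mehta-Ramanathan restriction to complete-intersection curves are unavailable; here one must substitute Cao's transcendental arguments, which approximate the relevant classes by K\"ahler classes and use Bando-Siu's existence of admissible Hermitian-Einstein metrics on semistable reflexive sheaves, together with a careful limiting argument that justifies the choice of a uniform $\varepsilon_0$.
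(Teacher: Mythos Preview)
Your overall strategy (stabilize the Harder--Narasimhan filtration for the perturbed polarization $\alpha_\varepsilon$, then analyze the graded pieces) matches the paper's, but the central step of your argument contains a real gap. You write the expansion
\[
c_2(\mathcal{E}) \;=\; \sum_k c_2(\mathcal{Q}_k) \;+\; \sum_{i<j} c_1(\mathcal{Q}_i)\,c_1(\mathcal{Q}_j)
\]
and assert that \emph{both} families of summands are nonnegative against $\alpha_\varepsilon^{n-2}$. Neither claim is justified. For the cross terms, you assert that each $c_1(\mathcal{Q}_k)$ is pseudoeffective ``since $\mathcal{E}$ is generically nef''; but this only yields pseudoeffectivity of $c_1$ for genuine quotients of $\mathcal{E}$, not for intermediate subquotients $\mathcal{Q}_k = \mathcal{E}_k/\mathcal{E}_{k-1}$ with $k<l$, and in any case the product of two pseudoeffective $(1,1)$-classes against a K\"ahler class need not be nonnegative. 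For the terms $c_2(\mathcal{Q}_k)\alpha_\varepsilon^{n-2}$, you invoke ``the Bogomolov-type inequality from the previous paragraph'' (i.e.\ Miyaoka's $c_2\ge 0$), claiming that $\mathcal{Q}_k$ ``inherits the generically nef property''. It does not: $\mathcal{Q}_k$ is $\alpha_\varepsilon$-semistable of nonnegative $\alpha_\varepsilon$-slope, which says nothing about other polarizations, and $c_1(\mathcal{Q}_k)$ is not known to be nef. The only inequality available for $\mathcal{Q}_k$ is the genuine Bogomolov--Gieseker inequality
\[
c_2(\mathcal{Q}_k)\,\alpha_\varepsilon^{n-2} \;\ge\; \frac{r_k-1}{2r_k}\,c_1(\mathcal{Q}_k)^2\,\alpha_\varepsilon^{n-2},
\]
whose right-hand side may well be negative.

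The paper's proof confronts exactly this difficulty. It combines the Bogomolov--Gieseker inequalities into
\[
2\,c_2(\mathcal{E})\,\alpha_\varepsilon^{n-2} \;\ge\; \Bigl(c_1(\mathcal{E})^2 - \sum_i \tfrac{1}{r_i}\,c_1(\mathcal{G}_i)^2\Bigr)\alpha_\varepsilon^{n-2},
\]
and then analyzes the right-hand side via a Hodge-index lemma for the signature-$(1,m)$ quadratic form $q(\eta_1,\eta_2)=\eta_1\eta_2\alpha_\varepsilon^{n-2}$, together with an asymptotic expansion in $\varepsilon$ of the numbers $c_1(\mathcal{G}_i)c_1(\mathcal{E})^k\{\omega\}^{n-1-k}$. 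In the case $\nu\ge 2$ this expansion is what forces $r_1=1$ and $c_1(\mathcal{E}/\mathcal{E}_1)=0$ when equality holds; your one-line deduction of $\mathrm{rk}\,\mathcal{E}_1=1$ from ``proportionality plus $c_2=0$ plus Hodge index'' skips precisely this computation. Once you replace your nonnegativity claim by Bogomolov--Gieseker and carry out the Hodge-index/asymptotic analysis, the rest of your outline (stabilization of the HN filtration, the three cases by $\nu$, and the numerically-flat conclusion via Theorem--Definition~\ref{numerically_projectively_flat} when $\nu=0$) goes through essentially as in the paper.
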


\begin{cor}\label{cotangent_nef_kahler}
Let $X$ be a compact K\"ahler manifold and let $\omega$ be a K\"ahler form. We take $ \varepsilon _0 >0$ as in Theorem \ref{Cao13_revisit}.
\begin{enumerate}
\item[$(1)$] If $K_X$ is nef and $c_2(X)  (c_1(K_X) +  \varepsilon  \{\omega\})^{n-2} =0$ for some $ \varepsilon _{0} >  \varepsilon >0$, then $\Omega_{X}^{1}$ is nef and $c_2(X)=0$ in $H^{2,2}(X,\R)$.
\item[$(2)$] If $-K_X$ is nef and $ c_2(X) (c_1(-K_X) +  \varepsilon  \{\omega\})^{n-2} =0$ for some $ \varepsilon _{0} >  \varepsilon >0$, then $T_{X}$ is nef and $c_2(X)=0$ in $H^{2,2}(X,\R)$.
\end{enumerate}
\end{cor}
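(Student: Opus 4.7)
The strategy is to derive both statements as direct consequences of Theorem \ref{generically_nef_reflexive} (1), applied respectively to $\mathcal{E} = \Omega_X$ in part (1) and $\mathcal{E} = T_X$ in part (2). The conclusion of Theorem \ref{generically_nef_reflexive} (1) — that $\mathcal{E}$ is a nef vector bundle with $c_2(\mathcal{E}) = 0$ — is precisely what we need, since $c_2(\Omega_X) = c_2(T_X) = c_2(X)$. The argument parallels the proof of Corollary \ref{cotangent_nef_proj}, but uses Theorem \ref{generically_nef_reflexive} (1) instead of (2), thereby extending the conclusion from the projective to the compact Kähler setting.

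To apply Theorem \ref{generically_nef_reflexive} (1), the hypotheses to verify are: reflexivity of $\mathcal{E}$ (automatic, since both $\Omega_X$ and $T_X$ are already locally free), nefness of $c_1(\mathcal{E})$ (immediate from the assumption on $\pm K_X$), and $\{\eta\}^{n-1}$-generic nefness of $\mathcal{E}$ with respect to every Kähler class $\{\eta\}$. The vanishing assumption $c_2(\mathcal{E})(c_1(\mathcal{E}) + \varepsilon \{\omega\})^{n-2} = 0$ for some $\varepsilon_0 > \varepsilon > 0$ is exactly the hypothesis made in each part of the corollary.

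For part (1), the generic nefness of $\Omega_X$ with respect to every Kähler class, under the assumption that $K_X$ is nef, is precisely the content of \cite[Theorem 1.4]{Enoki93} combined with \cite[Theorem 1.2]{Cao13}, and has already been invoked in the proof of Theorem \ref{thm-main}. Thus the three hypotheses of Theorem \ref{generically_nef_reflexive} (1) hold, and we conclude that $\Omega_X$ is nef and $c_2(X) = 0$ in $H^{2,2}(X,\mathbb{R})$. For part (2), the analogous input is the generic nefness of $T_X$ when $-K_X$ is nef on a compact Kähler manifold, i.e., $\mu^{\min}_{\{\eta\}^{n-1}}(T_X) \geq 0$ for every Kähler class $\{\eta\}$. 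In the projective case this is \cite[Theorem 1.4]{Ou17}; in the Kähler setting it can be extracted from the methods of Cao–Höring based on positivity of harmonic representatives of Ricci classes. Granting this input, Theorem \ref{generically_nef_reflexive} (1) yields that $T_X$ is nef and $c_2(X) = 0$.

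The only nontrivial step is the verification of the generic nefness of $T_X$ in part (2), which is the Kähler analogue of Ou's projective result and may require a short independent argument or precise citation; the remainder of the proof is a mechanical translation of the proof of Corollary \ref{cotangent_nef_proj}, with Theorem \ref{generically_nef_reflexive} (1) in place of (2).
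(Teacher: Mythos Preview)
Your proposal is correct and follows essentially the same approach as the paper: apply Theorem \ref{generically_nef_reflexive} (1) to $\mathcal{E}=\Omega_X$ and $\mathcal{E}=T_X$, checking generic nefness via the cited results. The only point where you hedge---the K\"ahler-case generic nefness of $T_X$ when $-K_X$ is nef---is handled in the paper by a direct citation to \cite[Theorem 1.2]{Cao13}, so no independent argument is needed.
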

\begin{proof}
By \cite[Theorem 1.2]{Cao13} or \cite[Theorem 1.4]{Enoki93},
the cotangent bundle $\Omega_{X}$ is $\{ \eta \}^{n-1}$-generically nef
for any K\"ahler form $\eta$. Hence, Conclusion (1)
follows from Theorem \ref{generically_nef_reflexive}.
By \cite[Theorem 1.2]{Cao13}, the tangent bundle $T_{X}$ is $\{ \eta \}^{n-1}$-generically nef
for any K\"ahler form $\eta$.
Hence, Conclusion (2) follows from Theorem \ref{generically_nef_reflexive}.
\end{proof}

We prove Corollary \ref{Cao_Ou_structure_thm} by using the following proposition:

\begin{prop}
\label{Fano_c2}
If $X$ is a Fano manifold of dimension $n \geq 2$, then $c_2(X) \neq 0$ in $H^{2,2}(X,\R)$.
\end{prop}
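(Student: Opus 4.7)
The plan is to argue by contradiction, assuming $c_2(X) = 0$ in $H^{2,2}(X, \mathbb{R})$. Since $-K_X$ is ample, in particular $c_2(X)\cdot(-K_X)^{n-2} = 0$, so Corollary \ref{cotangent_nef_proj} (2) (applied with $A = -K_X$) yields that $T_X$ is a nef vector bundle. Moreover $\nu(c_1(T_X)) = \nu(-K_X) = n \geq 2$, so the hypotheses of Theorem \ref{Ou_chapter6} (1) are met with $\mathcal{E} = T_X$. This produces a line bundle $L \subset T_X$ with $c_1(L) = c_1(T_X) = -K_X$, together with a torsion-free quotient $Q := T_X/L$ satisfying $c_1(Q) = 0$ and $c_2(Q)\cdot A^{n-2} = 0$.

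Next, I would run the argument of Theorem \ref{generically_nef_reflexive} (2) in the case $\nu(c_1)\geq 2$ to upgrade $Q$ to a numerically flat vector bundle. Concretely, $Q^{**}$ is generically nef with $c_1 = 0$, so Theorem \ref{Miyaoka_second_chern} gives $c_2(Q^{**})\cdot A^{n-2} \geq 0$, while the equality $c_2(Q)\cdot A^{n-2} = 0$ forces $c_2(Q^{**})\cdot A^{n-2} = 0$. By Theorem-Definition \ref{numerically_projectively_flat}, $Q^{**}$ is projectively numerically flat, and combined with $c_1(Q^{**})=0$ this means numerically flat. Lemma \ref{Technical_Lemma} (with the codimension estimate on $Q^{**}/Q$ as used in the proof of Theorem \ref{generically_nef_reflexive} (2)) then yields an exact sequence of vector bundles
\begin{equation*}
0 \longrightarrow L \longrightarrow T_X \longrightarrow Q^{**} \longrightarrow 0,
\end{equation*}
with $Q^{**}$ numerically flat of rank $n-1$.

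The last step exploits two classical features of a Fano manifold: simple connectedness (via rational connectedness, due to Campana and Koll\'ar--Miyaoka--Mori) and $H^1(X,\mathcal{O}_X)=0$ (Kodaira vanishing). A numerically flat bundle admits a filtration by subbundles with Hermitian flat graded pieces; on a simply connected manifold each such piece is trivial, and the resulting extensions are classified by $H^1(\mathcal{O}_X)=0$, hence all split. Therefore $Q^{**} \cong \mathcal{O}_X^{\oplus (n-1)}$, and dualizing gives
\begin{equation*}
0 \longrightarrow \mathcal{O}_X^{\oplus (n-1)} \longrightarrow \Omega_X^1 \longrightarrow L^{*} \longrightarrow 0.
\end{equation*}
Taking global sections, the $n-1 \geq 1$ constant sections of the left-hand term produce nonzero global sections of $\Omega_X^1$, contradicting $H^0(X,\Omega_X^1)\cong \overline{H^1(X,\mathcal{O}_X)} = 0$.

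The main obstacle is not a single deep step but the coordinated use of Corollary \ref{cotangent_nef_proj}, Theorem \ref{Ou_chapter6}, Theorem \ref{Miyaoka_second_chern}, and Lemma \ref{Technical_Lemma} to extract a rank-$(n-1)$ numerically flat quotient of $T_X$. It is precisely the nonvanishing of this quotient, which requires $n \geq 2$, that fuels the final contradiction through the Hodge-theoretic identity $H^{1,0}(X) = 0$ for Fano manifolds.
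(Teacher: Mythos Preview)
Your proof is correct and follows essentially the same route as the paper: both arguments invoke the $\nu \geq 2$ case of the proof of Theorem \ref{generically_nef_reflexive} (2) (which you unpack explicitly via Theorem \ref{Ou_chapter6}, Theorem \ref{Miyaoka_second_chern}, Theorem-Definition \ref{numerically_projectively_flat}, and Lemma \ref{Technical_Lemma}) to produce a numerically flat quotient $T_X \to E_2$ of rank $n-1$, trivialize it using the simple connectedness of $X$, and reach a contradiction from $h^0(X,\Omega_X^1) \geq n-1 > 0$. Your additional justification for the triviality of $E_2$ via the Hermitian flat filtration together with $H^1(X,\mathcal{O}_X)=0$ is a welcome elaboration of what the paper states in one line.
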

\begin{proof}
Assume that $c_2(X) = 0$ in $H^{2,2}(X,\R)$.
Then, we have $c_2(X) c_1(-K_X)^{n-2}=0$ and $\nu(c_1(-K_X))=n\ge2$.
By the proof of Theorem \ref{generically_nef_reflexive} (2),
there exists a quotient bundle $T_{X} \to E_2 \to 0$ of rank $\dim X -1$
such that $E_2$ is a numerically flat vector bundle.
The vector bundle $E_{2}$ is trivial since $X$ is simply connected.
This implies that  $h^{0}(X, \Omega_{X}) \geq \dim X - 1>0$,
which contradicts the rational connectedness of  $X$.
\end{proof}

\begin{proof}[Proof of Corollary \ref{Cao_Ou_structure_thm}]

Since $T_X$ is nef by Corollary \ref{cotangent_nef_kahler},
there exists a finite \'etale morphism $\pi: X' \rightarrow X$ and a smooth morphism
$\alpha : X' \rightarrow Y'$ such that $Y'$ is a torus and any fiber $F$ of $f$ is Fano
by \cite[Main Theorem]{DPS94}.
By considering the restriction of the standard exact sequence of cotangent bundles to $F$, 
we obtain $c_2(F)=0$ by $c_2(X')=0$. 
Since the fiber $F$ is Fano,
we conclude that $\dim F \le 1$ by Proposition \ref{Fano_c2}.
If $\dim F=0$, then $X' \cong Y'$ is a complex torus.
If $\dim F=1$, then $F \cong \mathbb{P}^1$.
Then $\alpha : X' \rightarrow Y$ is a $\mathbb{P}^1$-fiber bundle.
\end{proof}

If $\Omega_{X}$ is nef and $c_{2}(X)=0$,
then $K_{X}=\det \Omega_{X}$ is semi-ample by Theorem \ref{thm-main2}.
However, the following example shows that
this is not true when replacing $\Omega_{X}$ with $T_{X}$.

\begin{ex}
Let $C$ be an elliptic curve, and let $D$ be a divisor on $C$ such that $\deg D=0$ and $D$ is not a torsion element in $\mathrm{Cl}(X)$.
Set $X:=\mathbb P( \mathcal O_C \oplus \mathcal O_C(-D))$.
We denote by $\phi: X \to C$ the natural projection.
Then, the anticanonical bundle $-K_X = -K_{X/C}$ is nef but not semi-ample
(for example, see \cite[Example 6.2]{EIM21}).
From $c_1(\phi^{*}K_C)=0$, we obtain $c_2(X) = c_1(K_{X/C})c_1(\phi^{*}K_C) =0$.
\end{ex}

\section{Appendix} \label{section:appendix}
\subsection{Proof of Theorem \ref{Cao13_revisit}}

In this subsection, we prove Theorem \ref{Cao13_revisit}.
The proof is essentially the same as in \cite{Cao13},
but we provide the details for the reader's convenience.

We first give some remarks for the Chern classes of coherent sheaves on compact K\"ahler manifolds.
By \cite[Theorem 1 and Corollary 1]{Gri10}, for a coherent sheaf $\mathcal{F}$ on a compact K\"ahler  manifold $X$, we can define the Chern class $c_i(\mathcal{F}) \in H^{2i}(X,\Q)$, which is compatible with the
rational topological Chern class.
Thus, as in \cite[Definition 7.1]{CHP16}, we can define the intersection number $c_2(\mathcal{F})\{ \omega\}^{n-2}$ for a K\"ahler form $\omega$ on $X$.
Further we can use this intersection number in the same way as in the case of projective manifolds 
thanks to results proved in \cite[Chapter 7]{CHP16}.

We now check an elementary result used in \cite[Lemma 6.7]{Ou17} and \cite{Cao13}.
\begin{lem}
\label{linear_algebra}
Let $m$ be a positive integer.
We define the quadratic form by
$$
q(x,y):= x_0 y_0 - (x_1y_1 + \cdots + x_m y_m) 
$$
for $x = (x_0, x_1, \ldots, x_m), y = (y_0, y_1, \ldots, y_m)\in \R^{m+1}$.
Then, we have:
\begin{enumerate}
\item[$(1)$] If $q(x,x)\ge0$, then $q(x,x)q(y,y) \le q(x,y)^2$.
\item[$(2)$] If $q(x,x)>0$ and $q(x,y)=q(y,y)=0$, then $y=0$.
\item[$(3)$] If $q(x,x)=q(x,y)=q(y,y)=0$, then $x, y$ are linearly dependent.
\item[$(4)$] If $x \neq 0$ and $q(x,y)=q(x,x)=0$, then $q(y,y) \le 0$.
\end{enumerate}
\end{lem}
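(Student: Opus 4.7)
The statement is a collection of standard facts about the Lorentzian quadratic form of signature $(1,m)$. My plan is to set up once and for all the orthogonal decomposition with respect to a timelike vector, deduce (1) and (2) from it, and then derive (3) and (4) as short corollaries.

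\textbf{Setup and part (1).} The quadratic form $q$ has signature $(1,m)$. If $q(x,x)=0$ then the inequality $q(x,x)q(y,y)\le q(x,y)^2$ is trivial, so I may assume $q(x,x)>0$. Set $\alpha:=q(x,y)/q(x,x)$ and $z:=y-\alpha x$, so that $q(x,z)=0$ and hence $z\in x^{\perp}$. A direct expansion gives
\[
q(y,y)=\alpha^{2}q(x,x)+q(z,z)=\frac{q(x,y)^{2}}{q(x,x)}+q(z,z).
\]
Since $q$ restricted to $x^{\perp}$ is negative definite (the orthogonal complement of a timelike vector in signature $(1,m)$), we have $q(z,z)\le 0$, and multiplying by $q(x,x)>0$ yields the claimed inequality.

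\textbf{Part (2) from the same decomposition.} With $q(x,x)>0$, the assumption $q(x,y)=0$ forces $\alpha=0$, hence $y=z\in x^{\perp}$. Negative definiteness of $q|_{x^{\perp}}$ now gives $q(y,y)=q(z,z)\le 0$ with equality only when $z=0$, so $q(y,y)=0$ implies $y=0$.

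\textbf{Part (4) as a contrapositive of (2).} Assume $x\neq 0$, $q(x,x)=q(x,y)=0$, and suppose for contradiction that $q(y,y)>0$. Then $y$ plays the role of the timelike vector and $x$ the role of the isotropic one, so part (2) applied with the roles of $x$ and $y$ exchanged forces $x=0$, contradicting $x\neq 0$. Hence $q(y,y)\le 0$.

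\textbf{Part (3) by a totally-isotropic-subspace argument.} If $x=0$ the vectors are trivially dependent, so assume $x\neq 0$. The three vanishing conditions say exactly that $q$ restricted to $\mathrm{span}(x,y)$ is identically zero, i.e. this span is a totally isotropic subspace of $(\mathbb{R}^{m+1},q)$. Since $q$ has signature $(1,m)$, every totally isotropic subspace has dimension at most $\min(1,m)=1$, so $\dim\mathrm{span}(x,y)\le 1$ and $x,y$ are linearly dependent. I expect no real obstacle; the only care needed is the boundary case $q(x,x)=0$ in (1) and citing (or briefly re-deriving) the negative definiteness of $q|_{x^{\perp}}$ and the bound on isotropic dimensions from Sylvester's law of inertia.
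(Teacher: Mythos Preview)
Your proof is correct, but the route differs from the paper's. The paper works entirely with the one-variable polynomial $F(t)=q(x+ty,x+ty)=q(x,x)+2tq(x,y)+t^{2}q(y,y)$ and evaluates it at the coordinate-specific point $t=-x_{0}/y_{0}$ (or $-y_{0}/x_{0}$): for (1) this produces a sign change, hence a real root and a nonnegative discriminant; for (2)--(4) the same evaluation gives an immediate contradiction or the explicit linear relation $x-(x_{0}/y_{0})y=0$. Your argument is instead signature-based: you use the orthogonal projection $y\mapsto y-\alpha x$ onto $x^{\perp}$ together with negative definiteness of $q|_{x^{\perp}}$ for (1) and (2), swap roles for (4), and invoke the bound $\dim V\le\min(1,m)$ for totally isotropic $V$ for (3). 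Your approach is coordinate-free and transfers verbatim to any form of signature $(1,m)$, which is exactly how the lemma is applied later to the Hodge-index pairing; the paper's approach is slightly shorter and self-contained since it never appeals to Sylvester's law or the isotropic-dimension bound.
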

\begin{proof}
Set $F(t) := q(x+ty, x+ty) = q(x,x)+2tq(x,y) + t^2q(y,y)$.

(1). We may assume that $q(x,x) >0$ and $q(y,y) >0$. From $y_0 \neq0$, we have $F(0) >0$ and $F(-\frac{x_0}{y_0}) \le 0$. Thus, $q(x,y)^2 \ge q(x,x)q(y,y)$ since $F(t)=0$ has a real solution.

(2). If $y_0 \neq 0$, then $0\ge F(-\frac{x_0}{y_0}) =F(0)>0$, which is impossible. Thus, $y_0 =0$. From $q(y,y)=0$, we have $y=0$.

(3). We may assume that $y \neq 0$. From $F(-\frac{x_0}{y_0}) =0$, we have $x - \frac{x_0}{y_0}y=0$.

(4). The proof
follows from $q(y,y)=q(y-\frac{y_0}{x_0}x, y-\frac{y_0}{x_0}x) \le 0$.
\end{proof}

Let $X$ be a compact K\"ahler manifold of dimension $n \ge 2$, and let $\omega$ be a K\"ahler form.
We define $q(\eta_1, \eta_2 ) = \{\eta_1\} \{\eta_2\} \{\omega\}^{n-2}$
for $\eta_1, \eta_2 \in H^{1,1}(X, \R)$.
Then, the signature of $q$ is $(1,h^{1,1}(X, \R)-1 )$ (for example, see \cite{Voison}).
Hence, we can apply Lemma \ref{linear_algebra} to $q(\eta_1, \eta_2 )$.

\begin{proof}[{Proof of Theorem \ref{Cao13_revisit}.}]
The conclusion is obvious in the case of $\rk \mathcal{E} =1$.
Hence, we may assume that $\rk \mathcal{E} \ge 2$.

Set $\nu:=\nu(c_1(\mathcal{E}))$ and $\alpha_{ \varepsilon }:=c_1(\mathcal{E}) +  \varepsilon  \{\omega\}$ for any $  \varepsilon  >0$.
By \cite[Proposition 2.3]{Cao13}, if $  \varepsilon  >0$ is small enough,
the $\alpha_{ \varepsilon }^{n-1}$-Harder Narasimhan filtration
$$
0 =: \mathcal{E}_0 \subset \mathcal{E}_1 \subset \cdots \subset \mathcal{E}_l:=\mathcal{E}
$$
is independent of $  \varepsilon  $.
Set $\mathcal{G}_i := \mathcal{E}_i / \mathcal{E}_{i-1}$ and $r_i := \rk(\mathcal{G}_i)$.
Since $\mathcal{E}$ is $\{ \eta \}^{n-1} $-generically nef for any K\"ahler form $\eta$, we have $\mu_{\alpha_{ \varepsilon }}(\mathcal{G}_i) \ge 0$.
The sheaf $\mathcal{G}_i$ is an $\alpha_{ \varepsilon }^{n-1}$-semistable sheaf,
and thus the Bogomolov-Gieseker inequality yields
\begin{align}
\begin{split}
\label{first_inequality}
2c_2(\mathcal{E}) \alpha_{ \varepsilon }^{n-2}
&= \left(c_1(\mathcal{E})^2 + \sum_{1\le i\le l}(2c_2(\mathcal{G}_i) -c_1(\mathcal{G}_i)^2)\right)\alpha_{ \varepsilon } ^{n-2} \\
&\ge  \left(c_1(\mathcal{E})^2 - \sum_{1\le i\le l}\frac{c_1(\mathcal{G}_i)^2}{r_i}\right)\alpha_{ \varepsilon } ^{n-2}. \\
\end{split}
\end{align}

\begin{claim}\label{label_c}
$c_1(\mathcal{G}_i)c_1(\mathcal{E})^{\nu}\{\omega\}^{n-1-\nu}  =0$ holds if $\nu\le n-1$.
\end{claim}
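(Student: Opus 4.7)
The plan is to combine two ingredients. First, the additivity of the first Chern class along the filtration gives $c_1(\mathcal{E}) = \sum_{i=1}^{l} c_1(\mathcal{G}_i)$, so
\begin{align*}
\sum_{i=1}^{l} c_1(\mathcal{G}_i) \cdot c_1(\mathcal{E})^{\nu} \cdot \{\omega\}^{n-1-\nu}
= c_1(\mathcal{E})^{\nu+1} \cdot \{\omega\}^{n-1-\nu}.
\end{align*}
Since $\nu = \nu(c_1(\mathcal{E})) \leq n-1$ and $c_1(\mathcal{E})$ is nef, the right-hand side vanishes by the very definition of numerical dimension. Thus the claim reduces to proving that each summand on the left is non-negative; a sum of non-negative numbers being zero then forces each to vanish.

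To obtain $c_1(\mathcal{G}_i) \cdot c_1(\mathcal{E})^{\nu} \cdot \{\omega\}^{n-1-\nu} \geq 0$, I plan to extract it as the leading coefficient in an $\varepsilon$-expansion. The generic nef hypothesis gives the semistability inequality $c_1(\mathcal{G}_i) \cdot \alpha_{\varepsilon}^{n-1} \geq 0$ for every $\varepsilon > 0$. Expanding $\alpha_{\varepsilon}^{n-1} = (c_1(\mathcal{E}) + \varepsilon \{\omega\})^{n-1}$ by the binomial theorem and using $c_1(\mathcal{E})^{k} = 0$ in cohomology for $k > \nu$, only the terms with $0 \leq k \leq \nu$ survive. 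Dividing the resulting inequality by $\varepsilon^{n-1-\nu}$ and letting $\varepsilon \to 0^{+}$ isolates the single term corresponding to $k = \nu$, yielding
\begin{align*}
\binom{n-1}{\nu} c_1(\mathcal{G}_i) \cdot c_1(\mathcal{E})^{\nu} \cdot \{\omega\}^{n-1-\nu} \geq 0,
\end{align*}
as required.

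The only delicate point is the cohomological vanishing $c_1(\mathcal{E})^{\nu+1} = 0$, which is needed both in the identity above and to truncate the binomial expansion; this is part of the standard theory of numerical dimension for nef $(1,1)$-classes on a compact K\"ahler manifold, and is precisely the meaning of $\nu(c_1(\mathcal{E})) \leq n-1$ used throughout the paper. Beyond invoking this input, the argument is purely algebraic manipulation with the Harder-Narasimhan data, so I do not anticipate any further obstacle.
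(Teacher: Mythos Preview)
Your proposal is correct and follows essentially the same argument as the paper: both derive $c_1(\mathcal{G}_i)\cdot c_1(\mathcal{E})^{\nu}\cdot\{\omega\}^{n-1-\nu}\ge 0$ by expanding $\mu_{\alpha_\varepsilon}(\mathcal{G}_i)\ge 0$ and isolating the leading $\varepsilon^{n-1-\nu}$-coefficient, then use the telescoping sum $\sum_i c_1(\mathcal{G}_i)=c_1(\mathcal{E})$ together with $c_1(\mathcal{E})^{\nu+1}\cdot\{\omega\}^{n-1-\nu}=0$ to force each term to vanish. Your explicit mention of the cohomological vanishing $c_1(\mathcal{E})^{\nu+1}=0$ is exactly what the paper uses implicitly when writing the $O(\varepsilon^{n-\nu})$ remainder.
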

\begin{proof}[Proof of Claim \ref{label_c}]
We have $
c_1(\mathcal{G}_i)c_1(\mathcal{E})^{\nu}\{ \omega\}^{n-1-\nu}  \ge0$ from
$$
0 \leq \mu_{\alpha_{ \varepsilon }}(\mathcal{G}_i) = \binom{n-1}{\nu}\left( c_1(\mathcal{G}_i)c_1(\mathcal{E})^{\nu}\{\omega\}^{n-1-\nu} \right) \varepsilon ^{n-1-\nu} + O( \varepsilon ^{n-\nu}). $$
The sum of them can be computed as follows:
$$
\sum_{1\le i \le l} c_1(\mathcal{G}_i)c_1(\mathcal{E})^{\nu}\{\omega\}^{n-1-\nu}
=\sum_{1\le i \le l} \bigl(c_1(\mathcal{E}_i)-c_1(\mathcal{E}_{i-1})\bigr)c_1(\mathcal{E})^{\nu}\{\omega\}^{n-1-\nu}
=c_1(\mathcal{E})^{\nu+1}\{\omega\}^{n-1-\nu}.
$$
The right-hand side is zero by the definition of the numerical dimension.
This completes the proof.
\end{proof}

The proof is divided into three cases: $\nu\ge 2$, $\nu =1$, and $\nu = 0$.
\smallskip
\\
\textit{Case of $\nu \ge 2$.}
(\ref{first_inequality}) shows that
$c_2(\mathcal{E})\alpha_{ \varepsilon }^{n-2}>0$ holds if $l=1$.
Hence, we may assume that $l\ge2$.
Set $a_i := c_1(\mathcal{G}_i)c_1(\mathcal{E})^{\nu -1}\{\omega\}^{n-\nu}$.
Then, we have
$$
\mu_{\alpha_{ \varepsilon }}(\mathcal{G}_i)=\binom{n-1}{\nu-1} \frac{a_i}{r_i} \varepsilon ^{n-\nu}+O(\varepsilon ^{n-\nu+1}).
$$
From $\mu_{\alpha_{ \varepsilon }}(\mathcal{G}_1) > \cdots > \mu_{\alpha_{ \varepsilon }}(\mathcal{G}_l)\ge0$,
we obtain $a_1/r_1 \ge a_2/r_2 \ge \cdots \ge a_l/r_l\ge0$
for sufficiently small $ \varepsilon >0$.

\begin{claim}
\label{nu2_inequality}The following estimate holds:
$$
2c_2(\mathcal{E}) \alpha_{ \varepsilon }^{n-2}
\ge
\binom{n-2}{\nu-2} \left(\sum_{1\le k\le l} a_k\right)^{-1}
\left(\sum_{1\le i \le l} \frac{(r_i - 1)a_{i}^{2}}{r_i} + \sum_{1 \le i<j\le l}2a_i a_j \right) \varepsilon ^{n-\nu} + O( \varepsilon ^{n-\nu+1}).
$$
In particular, if there exists $a>0$ such that
$$\sum_{1\le i \le l}\frac{(r_i - 1)a_{i}^{2}}{r_i}+ \sum_{1 \le i<j\le l}2a_i a_j \ge a+ O( \varepsilon ), $$
then $c_2(\mathcal{E}) \alpha_{ \varepsilon }^{n-2}>0$ for sufficiently small $ \varepsilon >0$.
\end{claim}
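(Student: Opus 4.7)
The plan is to combine the Bogomolov--Gieseker-type inequality \eqref{first_inequality} with a Hodge-index upper bound on each $c_1(\mathcal{G}_i)^2\alpha_{\varepsilon}^{n-2}$, and then to read off the leading coefficients in $\varepsilon$. First I would observe that $\alpha_{\varepsilon} = c_1(\mathcal{E}) + \varepsilon\{\omega\}$ is a K\"ahler class for every $\varepsilon > 0$, being the sum of a nef class and a K\"ahler class. Hence by the Hodge Index Theorem for compact K\"ahler manifolds, the symmetric bilinear form $Q_{\varepsilon}(x,y) := xy \cdot \alpha_{\varepsilon}^{n-2}$ on $H^{1,1}(X,\mathbb{R})$ has signature $(1, h^{1,1}(X,\mathbb{R}) - 1)$, and Lemma \ref{linear_algebra} is applicable to it.

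Next I would expand each of $c_1(\mathcal{E})^2\alpha_{\varepsilon}^{n-2}$, $c_1(\mathcal{G}_i)c_1(\mathcal{E})\alpha_{\varepsilon}^{n-2}$ and $c_1(\mathcal{G}_i)^2\alpha_{\varepsilon}^{n-2}$ via the binomial theorem. The definition of $\nu$ together with the identity $\sum_k a_k = c_1(\mathcal{E})^{\nu}\{\omega\}^{n-\nu}$ gives the leading expansion $c_1(\mathcal{E})^2\alpha_{\varepsilon}^{n-2} = \binom{n-2}{\nu-2}\bigl(\sum_k a_k\bigr)\varepsilon^{n-\nu} + O(\varepsilon^{n-\nu+1})$, which in particular is strictly positive for small $\varepsilon > 0$. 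The key subtle point is that the a priori leading term $\binom{n-2}{\nu-1} c_1(\mathcal{G}_i)c_1(\mathcal{E})^{\nu}\{\omega\}^{n-\nu-1}\varepsilon^{n-\nu-1}$ of $c_1(\mathcal{G}_i)c_1(\mathcal{E})\alpha_{\varepsilon}^{n-2}$ vanishes by Claim \ref{label_c}, yielding the cleaner expansion $c_1(\mathcal{G}_i)c_1(\mathcal{E})\alpha_{\varepsilon}^{n-2} = \binom{n-2}{\nu-2}\, a_i\,\varepsilon^{n-\nu} + O(\varepsilon^{n-\nu+1})$.

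The main step is then to apply Lemma \ref{linear_algebra}(1) to $Q_{\varepsilon}$ with $x = c_1(\mathcal{E})$ and $y = c_1(\mathcal{G}_i)$. Since $Q_{\varepsilon}(c_1(\mathcal{E}),c_1(\mathcal{E})) > 0$ for small $\varepsilon$, this gives the upper bound $c_1(\mathcal{G}_i)^2\alpha_{\varepsilon}^{n-2} \le \binom{n-2}{\nu-2}\,\bigl(a_i^2/\sum_k a_k\bigr)\varepsilon^{n-\nu} + O(\varepsilon^{n-\nu+1})$ after dividing the expansions above. Inserting this into \eqref{first_inequality} and using the elementary identity $\bigl(\sum_k a_k\bigr)^2 - \sum_i a_i^2/r_i = \sum_i (r_i - 1)a_i^2/r_i + 2\sum_{i<j} a_i a_j$ produces the claimed estimate; the ``in particular'' assertion is then immediate, since a positive lower bound on the bracketed expression forces the leading $\varepsilon^{n-\nu}$ coefficient of $2c_2(\mathcal{E})\alpha_{\varepsilon}^{n-2}$ to be strictly positive. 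The main obstacle is the leading-order bookkeeping: without the cancellation provided by Claim \ref{label_c}, the Hodge-index bound on $c_1(\mathcal{G}_i)^2\alpha_{\varepsilon}^{n-2}$ would carry a term of strictly lower order in $\varepsilon$ than $c_1(\mathcal{E})^2\alpha_{\varepsilon}^{n-2}$, and the argument would not close.
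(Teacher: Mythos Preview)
Your proposal is correct and follows essentially the same route as the paper: apply the Hodge-index inequality (Lemma \ref{linear_algebra}(1)) with the K\"ahler polarization $\alpha_{\varepsilon}^{n-2}$ to bound each $c_1(\mathcal{G}_i)^2\alpha_{\varepsilon}^{n-2}$ by $(c_1(\mathcal{G}_i)c_1(\mathcal{E})\alpha_{\varepsilon}^{n-2})^2/(c_1(\mathcal{E})^2\alpha_{\varepsilon}^{n-2})$, expand both numerator and denominator in $\varepsilon$ using Claim \ref{label_c} and the vanishing $c_1(\mathcal{E})^{\nu+1}=0$ to identify the leading $\varepsilon^{n-\nu}$ coefficients as $\binom{n-2}{\nu-2}a_i$ and $\binom{n-2}{\nu-2}\sum_k a_k$, and then insert into \eqref{first_inequality} together with the identity $(\sum_k a_k)^2-\sum_i a_i^2/r_i=\sum_i(r_i-1)a_i^2/r_i+2\sum_{i<j}a_ia_j$. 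Your explicit remark that $\alpha_{\varepsilon}$ is K\"ahler (so that the signature statement applies) and your emphasis on Claim \ref{label_c} killing the $\varepsilon^{n-\nu-1}$ term are exactly the points the paper uses implicitly.
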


\begin{proof}[Proof of Claim \ref{nu2_inequality}]
By Lemma \ref{linear_algebra}, we have
\begin{equation}
\label{Hodge_index}
c_1(\mathcal{G}_i)^2\alpha_{ \varepsilon } ^{n-2} 
\le \frac{(c_1(\mathcal{G}_i) c_1(\mathcal{E})\alpha_{ \varepsilon }^{n-2})^2}{c_1(\mathcal{E})^2\alpha_{ \varepsilon }^{n-2}}.
\end{equation}
Hence, we obtain the following equalities:
\begin{align*}
\begin{split}
c_1(\mathcal{G}_i)c_1(\mathcal{E}) \alpha_{ \varepsilon }^{n-2} 
&=\binom{n-2}{\nu-2} a_i \varepsilon ^{n-\nu}+ O( \varepsilon ^{n-\nu +1}),  \\
c_1(\mathcal{E})^2\alpha_{ \varepsilon }^{n-2} 
&= \sum_{1\le k\le l}c_1(\mathcal{G}_k)c_1(\mathcal{E})\alpha_{ \varepsilon }^{n-2} \\
&=\binom{n-2}{\nu-2} \left(\sum_{1\le k\le l} a_k \right) \varepsilon ^{n-\nu}+ O( \varepsilon ^{n-\nu +1}),  \\
\left(c_1(\mathcal{E})^2 \alpha_{ \varepsilon }^{n-2}\right)^{-1} 
&=
\left(\binom{n-2}{\nu-2}\left( \sum_{1\le k\le l} a_k \right)\varepsilon ^{n-\nu}+ O( \varepsilon ^{n-\nu +1}) \right)^{-1}\\
&=
\frac{ \varepsilon ^{-n+\nu}}{\binom{n-2}{\nu-2}\left(\sum_{1\le k\le l} a_k \right)}+ O( \varepsilon ^{-n+\nu+1}). \\
\end{split}
\end{align*}
From (\ref{first_inequality}) and (\ref{Hodge_index}), we obtain

\begin{align*}
\begin{split}
2c_2(\mathcal{E}) \alpha_{ \varepsilon } ^{n-2}
&\ge c_1(\mathcal{E})^2\alpha_{ \varepsilon }^{n-2} - \sum_{1\le i\le l}\frac{(c_1(\mathcal{G}_i)c_1(\mathcal{E}) \alpha_{ \varepsilon }^{n-2})^2}{r_ic_1(\mathcal{E})^2 \alpha_{ \varepsilon }^{n-2}} \\
&= \binom{n-2}{\nu-2} \left(\sum_{1\le k\le l} a_k \right)^{-1}
\left(\sum_{1\le i\le l} a_i \sum_{1\le j\le l} a_j -\sum_{1\le i\le l} \frac{a_{i}^{2}}{r_i}\right)\varepsilon ^{n-\nu}
+ O( \varepsilon ^{n-\nu+1}) \\
&= 
\binom{n-2}{\nu-2} \left(\sum_{1\le k\le l} a_k \right)^{-1}
\left(\sum_{1\le i \le l} \left(1 - \frac{1}{r_i}\right)a_{i}^{2} + \sum_{1 \le i<j\le l}2a_i a_j\right)\varepsilon ^{n-\nu} + O( \varepsilon ^{n-\nu+1}). \\
\end{split}
\end{align*}
\end{proof}

If $a_2>0$ or $r_1 \neq 1$, we can find that there exists $a>0$ such that
$$
\sum_{1\le i \le l} \frac{(r_i - 1)a_{i}^{2}}{r_i} + \sum_{1 \le i<j\le l}2a_i a_j \ge a+ O( \varepsilon )
$$
from $a_1/r_1 \ge a_2/r_2 \ge  \cdots \ge a_l/r_l\ge0$.
Hence, by Claim \ref{nu2_inequality}, we can conclude that
$c_2(\mathcal{E})\alpha_{ \varepsilon }^{n-2}>0$ for sufficiently small $ \varepsilon >0$.

From now on, we may assume that $a_2=0$ and $r_1= 1$.
Then, from (\ref{first_inequality}), we obtain
\begin{align}
\label{inequality_onezero}
&2c_2(\mathcal{E})\alpha_{ \varepsilon }^{n-2}   \\ 
\ge & 2 \sum_{2 \le i \le l} c_1(\mathcal{G}_i) c_1(\mathcal{E})\alpha_{ \varepsilon }^{n-2}
-  \left ( \sum_{2 \le i \le l} c_1(\mathcal{G}_i) \right)^{2} \alpha_{ \varepsilon }^{n-2}
-   \sum_{2 \le i \le l} \frac{c_1(\mathcal{G}_i)^2 \alpha_{ \varepsilon }^{n-2}}{r_i}.\notag
\end{align}

Let us consider the case where
$c_1(\mathcal{G}_2)c_1(\mathcal{E})^{\nu -t}\{\omega\}^{n-1-\nu+t}\neq 0 $ holds
for some $t \in \{ 2, \ldots, \nu-1\}$.
Then, we take the minimal number $s \in \{ 2, \ldots, \nu-1\}$ such that $c_1(\mathcal{G}_2)c_1(\mathcal{E})^{\nu -s}\{\omega\}^{n-1-\nu+s}\neq 0 $, and
set $b_i := c_1(\mathcal{G}_i)c_1(\mathcal{E})^{\nu -s}\{\omega\}^{n-1-\nu+s}$
for any $i = 2, \ldots, l$.
Then, since we have
$$
c_1(\mathcal{G}_i) \alpha_{ \varepsilon }^{n-1} = \binom{n-1}{\nu-s} b_i \varepsilon ^{n-\nu+s-1} + O(\varepsilon ^{n-\nu+s})
\ge 0,
$$
we obtain $b_2 >0$.
By Lemma \ref{linear_algebra}, we obtain the following equalities:
\begin{align*}
\begin{split}
c_1(\mathcal{G}_i) c_1(\mathcal{E}) \alpha_{ \varepsilon }^{n-2}
&=
\binom{n-2}{\nu-s-1} b_i \varepsilon ^{n-\nu+s-1}  + O( \varepsilon ^{n-\nu+s}),  \\
\left ( \sum_{2 \le i \le l} c_1(\mathcal{G}_i) \right)^{2} \alpha_{ \varepsilon }^{n-2}
& \le \frac{ \left(\sum_{2 \le i \le l} c_1(\mathcal{G}_i)c_1(\mathcal{E})\alpha_{ \varepsilon }^{n-2} \right)^2}{c_1(\mathcal{E})^2\alpha_{ \varepsilon }^{n-2}}\\
&= \frac{\binom{n-1}{\nu-s-1}^2 (\sum_{2 \le i\le l}b_{i})^{2} }{ \binom{n-2}{\nu-2}a_1}  \varepsilon ^{n-\nu + 2s -2} + O( \varepsilon ^{n-\nu + 2s -1}), \\
c_1(\mathcal{G}_i)^2 \alpha_{ \varepsilon } ^{n-2}
&\le \frac{\left(c_1(\mathcal{G}_i)c_1(\mathcal{E}) \alpha_{ \varepsilon }^{n-2}\right)^2}{c_1(\mathcal{E})^2 \alpha_{ \varepsilon }^{n-2}}
=
\frac{\binom{n-2}{\nu-s-1}^2 b_{i}^{2} }{ \binom{n-2}{\nu-2}a_1}  \varepsilon ^{n-\nu + 2s -2} + O( \varepsilon ^{n-\nu + 2s -1}).
\end{split}
\end{align*}
From $s-1>0$ and (\ref{inequality_onezero}),
$$2c_2(\mathcal{E})\alpha_{ \varepsilon }^{n-2} \ge 2\binom{n-2}{\nu-s-1} \left(\sum_{2 \le i\le l}b_{i}\right)  \varepsilon ^{n-\nu+s-1}+ O( \varepsilon ^{n-\nu+s}).$$
This implies that $c_2(\mathcal{E})\alpha_{ \varepsilon } ^{n-2}>0$
for sufficiently small $ \varepsilon >0$.

From now on, we may assume that $c_1(\mathcal{G}_2)c_1(\mathcal{E})^{\nu -t}\{ \omega\}^{n-1-\nu+t} = 0 $ for any $t= 2, \ldots, \nu-1$.
Since $c_1(\mathcal{G}_i)c_1(\mathcal{E})^{t}\{\omega\}^{n-1-t} = 0 $
for any $t = 1, \dots,  n-1$ and $i = 2, \dots, l$,
we have
\begin{equation}
\label{inner_product_zero}
c_1(\mathcal{G}_i)c_1(\mathcal{E})\alpha_{ \varepsilon } ^{n-2}=c_1(\mathcal{G}_i)c_1(\mathcal{E})(c_1(\mathcal{E}) +  \varepsilon  \{\omega\})^{n-2}=0.
\end{equation}
From $c_1(\mathcal{E})^2\alpha_{ \varepsilon } ^{n-2} >0$,
we obtain $c_1(\mathcal{G}_i)^2 \alpha_{ \varepsilon } ^{n-2} \le 0$ by Lemma \ref{linear_algebra}.
By the same argument as above,
we obtain $\left(\sum_{2 \le i \le l}c_1(\mathcal{G}_i)\right)^2 \alpha_{ \varepsilon } ^{n-2} \le 0$, and finally
$c_2(\mathcal{E})\alpha_{ \varepsilon } ^{n-2} \ge0$ for sufficiently small $ \varepsilon >0$ from (\ref{inequality_onezero}).
Hence, we can take $\varepsilon_0 >0$ such that $c_2(\mathcal{E})\alpha_{ \varepsilon } ^{n-2} \ge0$ for any $\varepsilon_{0} > \varepsilon >0$.

If $c_2(\mathcal{E})\alpha_{ \varepsilon } ^{n-2} =0$ for some $\varepsilon_0> \varepsilon >0$, then $c_1(\mathcal{G})^2 \alpha_{ \varepsilon } ^{n-2} = 0$, where $\mathcal{G} := \mathcal{E}/\mathcal{E}_1$.
By Lemma \ref{linear_algebra},
$c_1(\mathcal{G})=0$ and $c_1(\mathcal{E}_1) = c_1(\mathcal{E})$.
Furthermore, $\mathcal{E}_1$ is a nef line bundle by $r_1=1$.
Hence, we obtain that
$$c_2(\mathcal{G})\alpha_{ \varepsilon } ^{n-2}=c_2(\mathcal{E})\alpha_{ \varepsilon } ^{n-2} - c_1(\mathcal{G})c_1(\mathcal{E}_1)\alpha_{ \varepsilon } ^{n-2}=0.$$
\\
\textit{Case of $\nu = 1$.}
By Claim \ref{label_c}, we have
\begin{equation}
\label{vanishing_first}
c_1(\mathcal{G}_i)c_1(\mathcal{E})\alpha_{ \varepsilon }^{n-2}=c_1(\mathcal{E})^2\alpha_{ \varepsilon }^{n-2}=0.
\end{equation}
Furthermore, from $c_1(\mathcal{E})\neq0$ and Lemma \ref{linear_algebra},
we have
$c_1(\mathcal{G}_i)^{2}\alpha_{ \varepsilon }^{n-2} \le 0$.
Together with (\ref{first_inequality}), we obtain
$$
2c_2(\mathcal{E}) \alpha_{ \varepsilon }^{n-2} \ge  - \sum_{1\le i\le l}\frac{c_1(\mathcal{G}_i)^2}{r_i}\alpha_{ \varepsilon } ^{n-2} \ge 0.
$$
If $c_2(\mathcal{E})\alpha_{ \varepsilon }^{n-2}=0$ holds for some $ \varepsilon >0$, then $c_1(\mathcal{G}_i)^{2}\alpha_{ \varepsilon }^{n-2} =0$.
By Lemma \ref{linear_algebra} and (\ref{vanishing_first}),
$c_1(\mathcal{G}_i)$ and $ c_1(\mathcal{E})$ are linearly dependent.
This implies that $c_1(\mathcal{G}_i)c_1(\mathcal{G}_j)\alpha_{ \varepsilon }^{n-2}=0$ for any $1 \le i\le j \le l$.
Since $\mathcal{G}_i$ is $\alpha_{ \varepsilon }^{n-1}$-semistable,
we obtain $c_2(\mathcal{G}_i)\alpha_{ \varepsilon }^{n-2} \ge 0$
from the Bogomolov-Gieseker inequality.
Furthermore, from
$$
c_2(\mathcal{E})\alpha_{ \varepsilon }^{n-2}=\sum_{1 \le i \le l} c_2(\mathcal{G}_i)\alpha_{ \varepsilon }^{n-2} =0,
$$
we obtain $c_2(\mathcal{G}_i)\alpha_{ \varepsilon }^{n-2} = 0$ any $i = 1, \dots, l$.
\\
\textit{Case of $\nu = 0$.}
From $c_1(\mathcal{E})=0$,
$\mathcal{E}$ is $\alpha_{ \varepsilon }^{n-1}$-semistable.
By the Bogomolov-Gieseker inequality, we obtain $c_2(\mathcal{E})\alpha_{ \varepsilon }^{n-2}\ge 0$.
If $c_2(\mathcal{E})\alpha_{ \varepsilon }^{n-2}=0$,
then $c_2(\mathcal{E}^{**})\alpha_{ \varepsilon }^{n-2}=0$.
By Theorem-Definition \ref{numerically_projectively_flat},
the reflexive hull $\mathcal{E}^{**}$ is a numerically projectively flat vector bundle;
thus, $\mathcal{E}^{**}$ is numerically flat by $c_1(\mathcal{E}^{**})=0$.
\end{proof}

\subsection{Related open problems}
\label{subsec-rela}

In this subsection, we collect some problems related to generically nef vector bundles.
\begin{conj}
\label{Miyaoka_thm}
Let $X$ be a compact K\"ahler manifold, let $\mathcal{E}$ be a torsion-free coherent sheaf, and let $\alpha_{1}, \ldots, \alpha_{n-2} $ be K\"ahler classes on $X$.
\\
$(1)$ If $c_1(\mathcal{E})$ is nef and $\mathcal{E}$ is generically nef, then the inequality $c_2(\mathcal{E}) \alpha_{1}  \cdots \alpha_{n-2} \ge 0$ holds.
\smallskip
\\
$(2)$ Let $\beta$ be a nef class. We further assume that
$E$ is $\alpha_{1}  \cdots \alpha_{n-2}\beta$-semistable.
Then, the following inequality holds:
$$
\left(c_2(\mathcal{E}) - \frac{r-1}{2r}c_1(\mathcal{E})^2 \right)\alpha_{1}  \cdots \alpha_{n-2} \ge 0.
$$
\end{conj}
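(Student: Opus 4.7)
The plan is to attack Conjecture \ref{Miyaoka_thm} by combining a Mehta--Ramanathan style restriction theorem for K\"ahler manifolds with the Bogomolov--Gieseker inequality for semistable sheaves on K\"ahler surfaces, and then bootstrapping via the Harder--Narasimhan filtration as in the proof of Theorem \ref{Cao13_revisit}.

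For Part (2), I would first reduce to the case of K\"ahler surfaces. Since $\mathcal{E}$ is $\alpha_1\cdots\alpha_{n-2}\beta$-semistable, I would apply a K\"ahler analog of the Mehta--Ramanathan restriction theorem (in the spirit of Toma's restriction results for movable classes, after approximating $\alpha_i$ by rational K\"ahler classes) to a suitable ``complete intersection'' surface $S$ associated to $\alpha_1,\ldots,\alpha_{n-2}$, obtaining that $\mathcal{E}|_S$ is $\beta|_S$-semistable. On $S$ the Bogomolov--Gieseker inequality for semistable reflexive sheaves holds by Bando--Siu's existence of approximate Hermite--Einstein metrics, giving
\[
\Bigl(c_2(\mathcal{E}|_S)-\tfrac{r-1}{2r}c_1(\mathcal{E}|_S)^2\Bigr)\ge 0,
\]
which is exactly the desired inequality after transferring back to $X$ via the intersection pairing and taking limits in the K\"ahler cone.

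For Part (1), I would mimic the strategy of Theorem \ref{Cao13_revisit}. Fix an auxiliary K\"ahler form $\omega$ and consider the Harder--Narasimhan filtration
\[
0=\mathcal{E}_0\subset \mathcal{E}_1\subset\cdots\subset \mathcal{E}_l=\mathcal{E}
\]
with respect to the perturbed polarization $\alpha_1\cdots\alpha_{n-2}\cdot(c_1(\mathcal{E})+\varepsilon\{\omega\})$ for $0<\varepsilon\ll 1$. The generic nefness hypothesis forces $\mu(\mathcal{G}_i)\ge 0$ for every graded piece $\mathcal{G}_i=\mathcal{E}_i/\mathcal{E}_{i-1}$, and applying Part (2) to each $\mathcal{G}_i$ gives
\[
2c_2(\mathcal{G}_i)\alpha_1\cdots\alpha_{n-2}\ge \tfrac{r_i-1}{r_i}\,c_1(\mathcal{G}_i)^2\alpha_1\cdots\alpha_{n-2}.
\]
Summing over $i$ produces $2c_2(\mathcal{E})\alpha_1\cdots\alpha_{n-2}$ on the left, at the cost of controlling the cross terms $c_1(\mathcal{G}_i)c_1(\mathcal{G}_j)\alpha_1\cdots\alpha_{n-2}$ for $i\ne j$. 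The positivity of these cross terms would be handled by a Hodge--Riemann analysis: the quadratic form $q(\eta_1,\eta_2):=\eta_1\eta_2\,\alpha_1\cdots\alpha_{n-2}$ on $H^{1,1}(X,\mathbb{R})$ has signature $(1,h^{1,1}(X)-1)$ by the mixed Hodge--Riemann bilinear relations for products of K\"ahler classes, so the Cauchy--Schwarz-type inequalities collected in Lemma \ref{linear_algebra} can be redeployed verbatim to replicate Claims 6.5--6.6 of the proof of Theorem \ref{Cao13_revisit} in this more general polarization.

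The main obstacle is the Hodge--Riemann analysis for general K\"ahler polarizations, together with the uniform behavior of the Harder--Narasimhan filtration as $\varepsilon\to 0$. In Theorem \ref{Cao13_revisit} one exploits the single-parameter family $\alpha_\varepsilon^{n-2}$ and its asymptotic expansion in $\varepsilon$; replacing $\omega^{n-2}$ by the general product $\alpha_1\cdots\alpha_{n-2}$ forces one to work with several independent parameters simultaneously, and the vanishing order calculus underlying Claim \ref{label_c} and Claim \ref{nu2_inequality} must be reorganized. A second delicate point is the ``boundary'' case $\nu(c_1(\mathcal{E}))=1$ where the maximal destabilizing subsheaf has slope zero: here one needs a K\"ahler version of the equality-case structure theorem (the analog of Theorem \ref{Ou_chapter6} for multi-polarizations), which is precisely the substance remaining even after the semistable case (2) is secured.
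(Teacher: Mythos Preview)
The paper does not prove this statement: Conjecture \ref{Miyaoka_thm} is explicitly presented as an open problem in Subsection \ref{subsec-rela}, with the remark that it is known only when $X$ is projective and the $\alpha_i,\beta$ lie in the N\'eron--Severi group. So there is no ``paper's own proof'' to compare against; what you have written is a sketch toward an open conjecture, and it has a substantive gap.

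The gap is in your reduction for Part~(2). You propose to restrict $\mathcal{E}$ to a ``complete intersection surface $S$ associated to $\alpha_1,\ldots,\alpha_{n-2}$'' and then invoke Bogomolov--Gieseker on $S$. But on a compact K\"ahler manifold a K\"ahler class $\alpha_i$ need not be the class of any divisor, and in general there is \emph{no} hypersurface whose class is $\alpha_i$; a non-projective K\"ahler manifold may have no rational K\"ahler classes at all, so ``approximating $\alpha_i$ by rational K\"ahler classes'' is not available. Without an actual surface $S$, Mehta--Ramanathan has nothing to restrict to, and the Bando--Siu argument you invoke on $S$ never gets started. This is precisely why the conjecture is open in the K\"ahler setting: the known Bogomolov--Gieseker inequality (as used in Theorem-Definition \ref{numerically_projectively_flat} and in the proof of Theorem \ref{Cao13_revisit}) is for semistability with respect to a power $\{\omega\}^{n-1}$ of a single K\"ahler class, and there is no restriction mechanism to pass to lower dimension for transcendental multipolarizations. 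Your strategy for Part~(1) via Harder--Narasimhan plus Hodge--Riemann is the natural analogue of Miyaoka's and of Theorem \ref{Cao13_revisit}, and the mixed Hodge--Riemann relations of Dinh--Nguy\^en would indeed supply the signature $(1,h^{1,1}-1)$ for $q(\eta_1,\eta_2)=\eta_1\eta_2\,\alpha_1\cdots\alpha_{n-2}$; but this step is conditional on Part~(2), so the whole argument rests on the missing restriction theorem.
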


Note that Conjecture \ref{Miyaoka_thm} (2) leads to Conjecture \ref{Miyaoka_thm} (1)
by the argument of \cite[Theorem 6.1]{Miyaoka87}.
Conjecture \ref{Miyaoka_thm} has been affirmatively solved
when $X$ is projective and both $\alpha_{i}$ and $\beta$ are in the N\'eron-Severi group of $X$.

The following conjecture has also been affirmatively solved in the projective case
but is still open in the compact K\"ahler case.
This conjecture is more difficult than we expected
due to the lack of the foliation theory developed in \cite{CP15} on compact K\"ahler manifolds.

\begin{conj}[{cf.\,\cite[Conjecture 4.3]{Cao13}}]
\label{mixed_kahler}
Let $X$ be a compact K\"ahler manifold, and let $\alpha_{1}, \ldots, \alpha_{n-1} $ be K\"ahler classes.
\begin{enumerate}
\item[$(1)$]
If $K_X$ is nef, then $\Omega_{X}$ is $\alpha_{1}  \cdots \alpha_{n-1}$-generically nef.
\item[$(2)$] If $-K_X$ is nef, then $T_{X}$ is $\alpha_{1}  \cdots \alpha_{n-1}$-generically nef.
\end{enumerate}
\end{conj}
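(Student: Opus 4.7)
The plan is to mimic the projective proof of this statement (due to Miyaoka for (1) and Cao--Campana--Paun style arguments for (2)) while replacing the ample polarization $A_{1}\cdots A_{n-1}$ by arbitrary K\"ahler classes $\alpha_{1}\cdots\alpha_{n-1}$. I will focus on (1); part (2) follows by the parallel argument applied to $T_{X}$, combined with a K\"ahler analogue of \cite[Theorem 1.4]{Ou17}.

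The argument is by contradiction. Assume $\Omega_{X}$ is not $\alpha_{1}\cdots\alpha_{n-1}$-generically nef. Dualizing and saturating, one obtains a subsheaf $\mathcal{F}\subset T_{X}$ that is the maximal destabilizing subsheaf with respect to $\alpha_{1}\cdots\alpha_{n-1}$ and satisfies $\mu_{\alpha_{1}\cdots\alpha_{n-1}}(\mathcal{F})>0$. The crucial next step is to show that $\mathcal{F}$ is closed under the Lie bracket, and hence defines a singular holomorphic foliation on $X$. Granting this, one proceeds as in \cite{CP15}: $\mathcal{F}$ must be algebraically integrable, producing a dominant meromorphic map $X\dashrightarrow Y$ whose general fiber $F$ has $-K_{F}$ big, and then $K_{X}|_{F}$ cannot be pseudo-effective, contradicting the nefness of $K_{X}$.

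The main obstacle is the bracket-closedness step. In the projective case this follows from Bogomolov's vanishing on subsheaves of $\Omega_{X}^{\otimes 2}$ combined with Mehta--Ramanathan restriction to complete-intersection curves; both ingredients are unavailable for arbitrary mixed K\"ahler polarizations. A natural prerequisite is therefore Conjecture \ref{Miyaoka_thm}, especially the Bogomolov--Gieseker inequality for sheaves semistable with respect to mixed K\"ahler classes, since this is exactly the positivity input that Campana--Paun use in \cite{CP15} to deduce integrability from slope positivity. Once bracket-closedness is established, the algebraic integrability step and the derivation of the contradiction should transfer to the K\"ahler setting using the positivity of direct images developed in \cite{CHP16} and the mixed-polarization slope calculus already exploited in \cite{Cao13}. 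A less ambitious intermediate target would be the case where all $\alpha_{i}$ lie on a line segment through a fixed K\"ahler class $\omega$ (so that continuity from the already-known single-polarization case \cite[Theorem 1.2]{Cao13} can be exploited), which may serve as a test case for the full mixed conjecture.
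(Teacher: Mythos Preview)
The statement you are trying to prove is listed in the paper as a \emph{conjecture}, not a theorem: it sits in Subsection~\ref{subsec-rela} (``Related open problems''), and the paper explicitly says that it ``has also been affirmatively solved in the projective case but is still open in the compact K\"ahler case,'' adding that ``this conjecture is more difficult than we expected due to the lack of the foliation theory developed in \cite{CP15} on compact K\"ahler manifolds.'' There is therefore no proof in the paper to compare your proposal against.

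Your proposal is not a proof either, and to your credit you say so yourself. The plan you outline---take the maximal destabilizing subsheaf $\mathcal{F}\subset T_X$ with positive slope, show it is bracket-closed, invoke algebraic integrability \`a la \cite{CP15}, and derive a contradiction with $K_X$ nef---is precisely the projective argument. The genuine gap is the one you flag: bracket-closedness of $\mathcal{F}$ for a mixed K\"ahler polarization $\alpha_1\cdots\alpha_{n-1}$ has no known proof, because neither Mehta--Ramanathan restriction nor the Campana--P\u{a}un foliation machinery of \cite{CP15} is available in this setting. Your suggestion that Conjecture~\ref{Miyaoka_thm} (the mixed-polarization Bogomolov--Gieseker inequality) is a natural prerequisite is reasonable, but even granting it, the algebraic-integrability step and the passage from ``$\mathcal{F}$ has positive slope'' to ``$\mathcal{F}$ is a foliation'' remain open in the K\"ahler category. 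In short, your proposal accurately diagnoses why the conjecture is hard and matches the paper's own assessment of the obstruction, but it does not close the gap; the statement remains a conjecture.
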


\end{document}